\definecolor{black}{rgb}{0.0, 0.0, 0.0}
\definecolor{red}{rgb}{1.0, 0.5, 0.5}
\newcommand{\margnote}[1]{
\ifthenelse{\boolean{shownotes}}%
{\marginpar{\raggedright\tiny\texttt{#1}}}%
{}%
}
\newcommand{\hole}[1]{
\ifthenelse{\boolean{shownotes}}%
{\begin{center} \fbox{ \rule {.25cm}{0cm} \rule[-.1cm]{0cm}{.4cm}
\parbox{.85\textwidth}{\begin{center} \texttt{#1}\end{center}} \rule
{.25cm}{0cm}}\end{center}} {} }
\title[Euler-Riesz equations]{The global Cauchy problem for the Euler--Riesz equations}
\author[Choi]{Young-Pil Choi}
\address[Young-Pil Choi]{\newline Department of Mathematics \newline
Yonsei University, 50 Yonsei-Ro, Seodaemun-Gu, Seoul 03722, Republic of Korea}
\email{ypchoi@yonsei.ac.kr}
\author[Jung]{Jinwook Jung}
\address[Jinwook Jung]{\newline Department of Mathematics and Institute of Pure and Applied Mathematics \newline Jeonbuk National University, 567 Baekje-daero, Deokjin-gu, Jeonju-si, Jeollabuk-do 54896,  Republic of Korea}
\email{2jwook12@gmail.com}
\author[Lee]{Yoonjung Lee}
\address[Yoonjung Lee]{\newline Department of Mathematics \newline
Yonsei University, 50 Yonsei-Ro, Seodaemun-Gu, Seoul 03722, Republic of Korea}
\email{yjglee@yonsei.ac.kr}
\numberwithin{equation}{section}
\newtheorem{theorem}{Theorem}[section]
\newtheorem{lemma}{Lemma}[section]
\newtheorem{proposition}{Proposition}[section]
\newtheorem{remark}{Remark}[section]
\newcommand{\R}{\mathbb R}
\newcommand{\N}{\mathbb N}
\newcommand{\ls}{\lesssim}
\newcommand{\mc}{\mathcal C}
\newcommand{\bq}{\begin{equation}}
\newcommand{\eq}{\end{equation}}
\newcommand{\e}{\varepsilon}
\newcommand{\lt}{\left}
\newcommand{\rt}{\right}
\newcommand{\pa}{\partial}
\newcommand{\w}{\widetilde}
\newcommand{\sfI}{\mathsf{I}}
\newcommand{\sfJ}{\mathsf{J}}
\newcommand{\sfK}{\mathsf{K}}
\def\moverlay{\mathpalette\mov@rlay}
\def\mov@rlay#1#2{\leavevmode\vtop{%
   \baselineskip\z@skip \lineskiplimit-\maxdimen
   \ialign{\hfil$\m@th#1##$\hfil\cr#2\crcr}}}
\newcommand{\charfusion}[3][\mathord]{
    #1{\ifx#1\mathop\vphantom{#2}\fi
        \mathpalette\mov@rlay{#2\cr#3}
      }
    \ifx#1\mathop\expandafter\displaylimits\fi}
\newcommand{\intr}{\int_{\R^d}}
\begin{document}
\allowdisplaybreaks


\subjclass[2020]{35Q31, 76N10.}
\keywords{Cauchy problem, Euler--Riesz system, temporal decay, global existence.}

\begin{abstract} We completely resolve the global Cauchy problem for the multi-dimensional Euler--Riesz equations, where the interaction forcing is given by $\nabla (-\Delta)^{-\sigma/2}\rho$ for some $\sigma \in (0,2)$. We construct the global-in-time unique solution to the Euler--Riesz system in a $H^s$ Sobolev space under a smallness assumption on the initial density and a {\it dispersive} spectral condition on the initial velocity. Moreover, we investigate the algebraic time decay of convergences for the constructed solutions. Our results cover the both attractive and repulsive cases as well as the whole regime $\sigma \in (0,2)$.

\end{abstract}

\maketitle \centerline{\date}

\tableofcontents

%
%
%
%
\section{Introduction}
In the present work, we are interested in the global-in-time Cauchy problem for the Euler--Riesz equations:
\begin{align}\label{ER_main}
\begin{aligned}
&\pa_t \rho + \nabla \cdot (\rho u) = 0, \quad (x,t) \in \R^d\times \R_+, \cr
&\pa_t (\rho u) + \nabla \cdot (\rho u \otimes u) +c_P\nabla(\rho^{\gamma})=  \lambda \rho \nabla \Lambda^{-\sigma} \rho,
\end{aligned}
\end{align}
subject to initial data:
\[
(\rho(x,0), u(x,0)) = (\rho_0(x), u_0(x)), \quad x \in \R^d,
\]
where $\rho=\rho(x,t)$ and $u= u(x,t)$ denote the density and velocity  of the fluid at time $t$ and position $x$, respectively. 
Here, the Riesz operator $\Lambda^{s}$ is defined by $(-\Delta)^{s/2}$, $s \in \R$, and in the current work we concentrate on the case 
\[
\sigma \in (0,\min\{d,2\}).
\] 
Note that the case $\sigma = 2$ corresponds to the classical Euler--Poisson system. Here, $\gamma > 1$, $c_P$ is a nonnegative constant, and $\lambda$ is a constant. Depending on the sign of $\lambda$, the forcing term describes the attractive ($\lambda > 0$) and repulsive ($\lambda < 0$) interactions. 

The pressureless case $(c_P = 0)$ is introduced and rigorously derived in \cite{Ser20} as a mean-field limit of the second-order system corresponding to Newton's law with Riesz interactional potential. The rigorous derivation of the system \eqref{ER_main} with the isothermal pressure $(\gamma=1)$ from the Vlasov--Riesz--Fokker--Planck-type equation is studied in \cite{CJapp}. The relation between \eqref{ER_main} and the fractional porous medium equation is also investigated in \cite{CJ21} through a relaxation limit.

One of the main difficulties in analyzing the Euler system arises from the finite-time singularity formation. More precisely, no matter how smooth and small the initial data are, the equations may develop a singularity in finite time. In the repulsive case $(\lambda <0)$, the system \eqref{ER_main} has a dissipative effect from the interactions. However, even in that case, it is known that the system \eqref{ER_main} has a singularity formation. We refer to \cite{Che90, CJ22, Mak92, MP90, Per90} for the study of the finite-time breakdown of smoothness of solutions to the Euler, Euler--Poisson, or Euler--Riesz equations. Thus, for the global-in-time existence of solutions, it is natural to take into account solutions with lower regularities, for instance, measure-valued solutions, entropy weak solutions \cite{BG98, CDGS24, DiP85, FGSW16, LPS96, WRS96}. We refer to \cite{Che05} and references therein for a general survey of the Euler equations.

Some of the previous works on the global existence of regular solutions for the Euler--Poisson system, \eqref{ER_main} with $\sigma = 2$, can be summarized as follows. In the case of zero background state, the critical thresholds phenomena, that lead to global regularity or finite-time blow-up of regular solutions, in the one-dimensional Euler--Poisson system with and without pressure are observed in \cite{TW08} and \cite{ELT01}, respectively. In the presence of the constant background state, the global-in-time existence of regular solutions to the one-dimensional Euler-Poisson is studied in \cite{GHZ16}, and two and three-dimensional cases are dealt with in \cite{Guo98, IP13, LW14}.

For the Euler--Riesz system \eqref{ER_main}, the local-in-time well-posedness is established in a recent work \cite{CJ22}. Roughly speaking, the interaction force in \eqref{ER_main} can be regarded as $\nabla \Lambda^{-\sigma} \rho \sim |\nabla|^{1-\sigma} \rho$ in terms of regularity. This observation shows that establishing the well-posedness theory of \eqref{ER_main} is more challenging in the case $\sigma \in (0,1)$. In fact, this regularity issue is already addressed in \cite{Cha23, CJeapp, CJKpre} for the Vlasov--Riesz system. Since the case $\sigma= 1$ plays a critical value and has physical meaning as well, motivated from \cite{BDIV97, CIP94, Ill00}, we call that particular case the Manev potential and divide the regime $0 < \sigma < 2$ into two parts:
\begin{itemize}
\item[(i)] $1\leq \sigma < 2$ (sub-Manev potential case) and
\item[(ii)] $0 < \sigma <1$ (super-Manev potential case).
\end{itemize}

In the current work, we study the global-in-time existence and uniqueness of regular solutions to the system \eqref{ER_main}. In the presence of linear damping in velocity and constant background state, the global-in-time existence and uniqueness of regular solutions are obtained in \cite{CJ23} and \cite{CJLpre} in the repulsive and attractive interaction cases, respectively. However, in the undamped case, as mentioned above it is not expected to have the global existence of regular solutions for general initial data due to the lack of dissipations. Thus, inspired by \cite{BDDN21, DD22, GS97, G98}, we analyze sufficient conditions on initial data that ensure the global-in-time existence of regular solutions. More precisely, we consider a sufficiently small, regular initial density and a {\it dispersive} spectral condition on the initial velocity that causes fluid particles to spread. In this framework, the global-in-time Cauchy problem for the Euler system with sub-Manev interactions, i.e. \eqref{ER_main} with $c_P > 0$, $\lambda = \pm1$, and $\sigma \in [1,2)$ is discussed in a recent work \cite{DD21}. 

%
%
%
%
\subsection{Main results}
Our main results are two-fold. First, we investigate the global well-posedness of the pressureless, repulsive Euler-Riesz equations, i.e. the system \eqref{ER_main} with $c_P = 0$ and $\lambda = -1$. Secondly, we investigate the global well-posedness of Euler--Riesz equations with pressure, i.e. the system \eqref{ER_main} with $c_P>0$.

To state our results, we consider the Burgers' equation:
\bq\label{burgers}
\begin{aligned}
&\pa_t v + v \cdot \nabla v = 0, \quad (x,t) \in \R^d\times \R_+, \cr
 &v(x,0) = v_0(x), \quad x \in \R^d.
\end{aligned}
\eq
For $s>\frac d2+1$, we define the space $E^s$ as
\[
E^s := \{ z \in \mc(\R^d; \R^d)  :  \nabla z \in L^\infty \mbox{ and } \nabla^2 z \in H^{s-2}\}.
\]
Here, $\nabla^k$ denotes the $k$-th order gradient.

We then recall from \cite{BDDN21} the following result for the Burgers' equation.
\begin{proposition}\label{prop_bur}
	Suppose that $v_0 \in E^s$ satisfies
	\bq\label{sp_cond}
	\mbox{there exists} \quad \e>0 \quad \mbox{such that} \quad \mbox{dist(\emph{Sp}(}Dv_0(x)), \R_-) \ge \e \quad \forall x \in \R^d,
	\eq
	where $\emph{Sp} A$ denotes the spectrum of a matrix $A$. 
	Then the following holds:
	\begin{itemize}
		\item[(i)] The system \eqref{burgers} has a global-in-time classical solution corresponding to the initial datum $v_0(x)$ such that
		\[
		\nabla^2 v \in \mc^j(\R_+; H^{s-2-j}) \quad \mbox{for } \ j=0,1.
		\]
		\item[(ii)] Moreover, there exists a function $K \in \mc_b(\R^d\times \R_+ ; \R^d \times \R^d)$ such that the relation
		\bq\label{dv}
		\nabla v(x,t) = \frac{\mathbb{I}_d}{1+t} + \frac{K(x,t)}{(1+t)^2}
		\eq
		holds for any $(x, t)\in \R^d\times\R_+$. Here, $\mathbb{I}_d$ stands for the $d \times d$ identity matrix and $K$ satisfies
		\bq\label{K_est}
		\|K(t)\|_{\dot{H}^{\ell}} \le k_\ell(1+t)^{\frac d2 -\ell} \quad \mbox{for any } \  0 < \ell \le s-1
		\eq
		for some constant $k_\ell>0$. In particular, we get 
\bq\label{divv}
\nabla \cdot v(x,t) = \frac{d}{1+t} + \frac{ \mbox{\rm Tr} K(x,t)}{(1+t)^2}.
\eq
Here, the symbol $\mbox{\rm Tr}$ denotes the trace of a matrix. 
		\item[(iii)] If $\nabla^2 v_0$ is bounded, then we have
		\[
		\|\nabla^2 v(t)\|_{L^\infty} \le C(1+t)^{-3}.
		\]
	\end{itemize} 
\end{proposition}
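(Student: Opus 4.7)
The plan is to solve \eqref{burgers} by the method of characteristics and then algebraically isolate the claimed asymptotic profile of $\nabla v$. Let $X(t,x)$ be the flow of $v$, defined by $\dot X = v(X,t)$ and $X(0,x)=x$. Since $v$ is transported along $X$, one has $v(X(t,x),t)=v_0(x)$ as long as the solution exists. Setting $J(t,x):=\nabla_x X(t,x)$, differentiating $v(X,t)=v_0(x)$ in $x$ gives $(\nabla v)(X,t)\,J = \nabla v_0(x)$, while differentiating the flow equation yields $\dot J=(\nabla v)(X,t)\,J$. Combining the two relations produces the strikingly simple identity $\dot J=\nabla v_0(x)$, $J(0,x)=\mathbb{I}_d$, so that
\[
J(t,x)=\mathbb{I}_d + t\,\nabla v_0(x),\qquad (\nabla v)(X(t,x),t)=\nabla v_0(x)\bigl(\mathbb{I}_d + t\nabla v_0(x)\bigr)^{-1}.
\]

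The dispersive condition \eqref{sp_cond} guarantees that no eigenvalue of $A(x):=\nabla v_0(x)$ lies within distance $\e$ of $(-\infty,0]$, uniformly in $x$, so $\mathrm{Sp}(J(t,x))$ never meets the origin; together with the $L^\infty$ bound on $\nabla v_0$ coming from $v_0\in E^s$, a resolvent estimate for $d\times d$ matrices delivers the uniform bound $\|J(t,x)^{-1}\|\lesssim (1+t)^{-1}$. Thus $X(t,\cdot)$ is a global diffeomorphism of $\R^d$, ruling out Burgers' usual finite-time breakdown, and the elementary algebraic identity
\[
(1+t)A(\mathbb{I}_d + tA)^{-1} - \mathbb{I}_d = (A-\mathbb{I}_d)(\mathbb{I}_d + tA)^{-1}
\]
rewrites $(\nabla v)(X,t)$ in the form \eqref{dv} with
\[
K(X(t,x),t) := (1+t)\bigl(\nabla v_0(x)-\mathbb{I}_d\bigr)\bigl(\mathbb{I}_d + t\nabla v_0(x)\bigr)^{-1},
\]
bounded on $\R^d\times\R_+$ by the resolvent estimate; taking traces yields \eqref{divv}.

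For the homogeneous Sobolev bounds \eqref{K_est}, I would differentiate the Lagrangian formula for $K$ up to $\ell$ times, using the identity $\nabla(\mathbb{I}+tA)^{-1}=-t(\mathbb{I}+tA)^{-1}(\nabla A)(\mathbb{I}+tA)^{-1}$ inductively: each factor $t(\mathbb{I}+tA)^{-1}$ is uniformly of order $1$, so one gets a bounded polynomial expression in the derivatives of $\nabla v_0 \in H^{s-1}$. Transferring from Lagrangian to Eulerian coordinates, the Jacobian $\det J\sim(1+t)^d$ contributes the factor $(1+t)^{d/2}$ when converting $L^2$-based norms, while each Eulerian derivative produces an additional $J^{-1}$ of size $(1+t)^{-1}$; interpolation between the integer cases gives the claimed rate $(1+t)^{d/2-\ell}$ for fractional $\ell$. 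Part (iii) is obtained by differentiating $\nabla v(X,t)J=\nabla v_0$ once more in $x$: the resulting expression for $\nabla^2 v(X,t)$ is a product of $J^{-1}$ factors (each $\lesssim(1+t)^{-1}$) with $\nabla^2 v_0\in L^\infty$, yielding the $(1+t)^{-3}$ decay.

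The principal obstacle is the uniform resolvent bound $\|(\mathbb{I}_d + tA(x))^{-1}\|\lesssim (1+t)^{-1}$: the hypothesis \eqref{sp_cond} controls only the spectrum of the generically non-symmetric matrix $A(x)$, and non-normal matrices can exhibit transient growth that is invisible to the eigenvalues alone. One must combine the quantitative spectral separation $\e$ with the $L^\infty$ control on $\nabla v_0$ to close the resolvent estimate with a constant that is uniform in $x$. The subsequent Sobolev tracking of $K$ must then be carried out with matching care to avoid spurious $t$-dependent losses when one passes between Lagrangian and Eulerian coordinates, which is the technical heart of \eqref{K_est}.
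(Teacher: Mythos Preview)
The paper does not prove this proposition at all: it is stated as a recall from \cite{BDDN21} (see the sentence immediately preceding the proposition), so there is no in-paper proof to compare against. Your sketch via characteristics---$X(t,x)=x+tv_0(x)$, $J=\mathbb{I}_d+t\nabla v_0$, and the algebraic extraction of $K$---is exactly the standard route used in \cite{BDDN21}, and the points you flag (the uniform resolvent bound for non-normal $\nabla v_0$ from the spectral gap $\e$ together with the $L^\infty$ bound, and the Lagrangian-to-Eulerian transfer with Jacobian $\sim(1+t)^d$) are precisely the technical ingredients that have to be checked there.
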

%
%
%
%
\subsubsection{Pressureless case}\label{ssec:px}
In the absence of pressure, i.e. $c_P=0$, by introducing $n:= \sqrt{\rho}$, we reformulate the system \eqref{ER_main} as
\bq\label{ER_main2}
\begin{aligned}
&\pa_t n + u \cdot \nabla n + \frac12 n\nabla\cdot u = 0, \quad (x,t) \in \R^d \times \R_+,\\
&\pa_t u + u\cdot \nabla u = \lambda \nabla \Lambda^{-\sigma}( n^2)
\end{aligned}
\eq
with the initial data:
\[
(n(x,0), u(x,0)) = (n_0(x):= \sqrt{\rho_0(x)}, u_0(x)), \quad x \in \R^d.
\]

Note that in the pressureless and attractive Riesz interaction case, i.e.  $c_P=0$ and $\lambda=1$, we would deduce from the linear analysis that \eqref{ER_main2} is ill-posed when $\sigma<2$, see \cite[Section 2]{CJ22} for details. Therefore, in the absence of pressure, we only deal with the repulsive case. For simplicity of presentation, we take $\lambda = -1$. Then our main theorem in the pressureless and repulsive case is stated as follows.
\begin{theorem}\label{main_thm}
Let $d\ge 1$, $\sigma \in (0, \min\{d,2\})$, and $s>\max\lt\{2, \frac{d}{2} + 1\rt\}$. Suppose that the initial data $(n_0, u_0)$ satisfy
\begin{enumerate}
\item[(i)]
there exists $v_0\in E^{s+1}$ satisfying \eqref{sp_cond} and $u_0 - v_0$ is small in $H^{s+\frac \sigma2}$ and
\item[(ii)]
$n_0$ is small in $H^s$.
\end{enumerate}
Then there exists a unique global-in-time regular solution $(n,u-v)$ to \eqref{ER_main2} satisfying
\[
n \in \mc(\R_+; H^s) \quad \mbox{and} \quad u-v \in \mc(\R_+; H^{s+\frac \sigma2}).
\]
Furthermore, one also has the following temporal decay estimates: for any $t \ge 0$
\[
\|n(t)\|_{\dot{H}^\ell} \le C(1+t)^{\frac d2 - \ell-\frac\sigma2-\min\lt\{1, \frac{d-\sigma}{2}\rt\} } \quad \forall\, \ell \in [0,s]
\]
and
\[
\|(u-v)(t)\|_{\dot{H}^\ell} \le  C(1+t)^{\frac d2 - \ell -\min\lt\{1, \frac{d-\sigma}{2}\rt\}} \qquad \forall\, \ell \in \lt[0,s+\frac\sigma2\rt].
\]
Here $C > 0$ is independent of $t$.
\end{theorem}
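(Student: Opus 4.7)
My plan follows the dispersion--perturbation strategy of \cite{BDDN21, DD21}: write $u=v+w$, where $v$ is the global Burgers background supplied by Proposition \ref{prop_bur}, and close a weighted energy estimate for the remainder $(n,w)$ that harvests decay from the expansion $\nabla\cdot v = d/(1+t) + O((1+t)^{-2})$ provided by \eqref{divv} and \eqref{K_est}. Substituting $u=v+w$ into \eqref{ER_main2} yields the coupled system
\begin{align*}
\partial_t n + (v+w)\cdot\nabla n + \tfrac12 n\,\nabla\cdot(v+w) &= 0,\\
\partial_t w + (v+w)\cdot\nabla w + (w\cdot\nabla)v &= -\nabla\Lambda^{-\sigma}(n^2),
\end{align*}
while local existence in $H^s\times H^{s+\sigma/2}$ is inherited from \cite{CJ22}. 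The entire task is therefore to produce a priori bounds that both propagate the smallness of $(n,w)$ and yield the stated algebraic decay.

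To extract the damping, I would apply $\nabla^\ell$ to each equation and pair in $L^2$. After one integration by parts, the transport terms produce an effective damping of rate $(\tfrac{d}{2}+\ell)/(1+t)$ on $\|\nabla^\ell n\|_{L^2}$ (coming from $\tfrac12\nabla\cdot v$ plus the top-order commutator with $v\cdot\nabla$) and rate $(1+\ell)/(1+t)$ on $\|\nabla^\ell w\|_{L^2}$ (combining $\nabla\cdot v$, $(w\cdot\nabla)v$, and the analogous commutator), up to time-integrable corrections of order $(1+t)^{-2}\|K\|_{\dot H^{\ell}}$ controlled by \eqref{K_est}. At the linear level these rates predict $\|\nabla^\ell n\|_{L^2}\lesssim(1+t)^{-d/2-\ell}$ and $\|\nabla^\ell w\|_{L^2}\lesssim(1+t)^{-1-\ell}$; the slower rates stated in the theorem arise by interpolating these high-Sobolev bounds against a low-frequency control (mass conservation $\|n\|_{L^2}^2=\|\rho_0\|_{L^1}$ together with a matching $\dot H^{-\sigma/2}$ estimate dictated by the Riesz kernel), which is the source of the $\min\{1,(d-\sigma)/2\}$ factor.

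The genuinely non-standard term is the Riesz forcing $-\nabla\Lambda^{-\sigma}(n^2)$ tested against $\nabla^\ell w$. By Kato--Ponce and Sobolev embedding, it is bounded schematically by $\|n\|_{L^\infty}\,\|n\|_{\dot H^{\ell+1-\sigma}}\,\|\nabla^\ell w\|_{L^2}$. In the sub-Manev range $\sigma\in[1,2)$ the exponent $1-\sigma\le 0$ means the forcing actually gains regularity and is absorbed immediately by the smallness of $\|n_0\|_{H^s}$. In the super-Manev range $\sigma\in(0,1)$, however, $\nabla\Lambda^{-\sigma}$ is of \emph{positive} order $1-\sigma$, and this is precisely what forces the theorem's asymmetric regularity $w\in H^{s+\sigma/2}$: pairing $\Lambda^{\ell+\sigma/2}w$ with $\Lambda^{\ell-\sigma/2}\nabla(n^2)$ distributes the $1-\sigma$ loss as $(1-\sigma)/2$ on each side and stays within the available regularity of $n$. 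Assembling all of these estimates into a time-weighted functional of the form
\[
\mathcal{X}(t) := \sup_{\tau\le t}\Bigl(\sum_{0\le\ell\le s}(1+\tau)^{a_\ell}\|n(\tau)\|_{\dot H^\ell} + \sum_{0\le\ell\le s+\sigma/2}(1+\tau)^{b_\ell}\|w(\tau)\|_{\dot H^\ell}\Bigr),
\]
with $a_\ell,b_\ell$ the exponents announced in the theorem, leads to an inequality of the schematic form $\mathcal{X}(t)\le C\mathcal{X}(0)+C\mathcal{X}(t)^2$; a standard bootstrap under the smallness hypothesis on $(n_0,u_0-v_0)$ closes it and simultaneously yields the claimed decay.

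The main obstacle is precisely the super-Manev regime $\sigma\in(0,1)$, where the Riesz loss must be balanced simultaneously against (i) the damping budget available from $\nabla\cdot v$, (ii) the asymmetric Sobolev exponents $(s,s+\sigma/2)$ needed to redistribute the $(1-\sigma)$ loss, and (iii) the low-frequency analysis producing the $\min\{1,(d-\sigma)/2\}$ factor. Keeping these three ingredients mutually compatible while commuting $\nabla^\ell$ with $\Lambda^{-\sigma}\nabla$ inside the coercive energy structure is the delicate step, and is what ultimately dictates the choice of function spaces in the statement.
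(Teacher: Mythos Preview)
Your overall architecture (perturb around the Burgers background, harvest damping from $\nabla\cdot v$, close a time-weighted bootstrap) matches the paper, but there is a genuine gap in your treatment of the Riesz forcing at top order. Your proposed mechanism---``pairing $\Lambda^{\ell+\sigma/2}w$ with $\Lambda^{\ell-\sigma/2}\nabla(n^2)$ distributes the $1-\sigma$ loss as $(1-\sigma)/2$ on each side''---does not close as a product estimate: $\Lambda^{\ell-\sigma/2}\nabla(n^2)$ still carries $\ell+1-\sigma/2>\ell$ derivatives on $n$, so at $\ell=s$ you would need $\|n\|_{\dot H^{s+1-\sigma/2}}$, which exceeds the available $H^s$ regularity for every $\sigma<2$. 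The $H^s\times H^{s+\sigma/2}$ framework works not because of a redistribution but because of an exact \emph{cancellation}. After moving $\Lambda^{\sigma/2}$ by self-adjointness, the Riesz term in the $w$-equation becomes $\int\Lambda^s w\cdot\nabla\Lambda^s(n^2)\,dx$; simultaneously, the $\dot H^s$ estimate of the continuity equation produces the top-order contribution $\tfrac12\int n\,\Lambda^s n\,\nabla\cdot\Lambda^s w\,dx$. Writing $\nabla\Lambda^s(n^2)=2n\nabla\Lambda^s n+2[\Lambda^s,n]\nabla n$ and integrating by parts shows that in the combined functional $X_{s,\sigma}^2:=\|w\|_{\dot H^{s+\sigma/2}}^2+4\|n\|_{\dot H^s}^2$ these two highest-order terms cancel exactly, leaving only commutators bounded by $\|\nabla n\|_{L^\infty}\|w\|_{\dot H^s}\|n\|_{\dot H^s}$. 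This cancellation is what simultaneously dictates the square-root variable $n=\sqrt\rho$, the $\sigma/2$ regularity gap, and the coefficient $4$; without it the super-Manev estimate does not close.

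Two smaller points. First, your stated damping rates are off: on $\|n\|_{\dot H^s}^2$ the net rate coming from $v\cdot\nabla$, the commutator $[v,\Lambda^s]\cdot\nabla$, and $\tfrac12 n\nabla\cdot v$ is $s/(1+t)$ (not $(d/2+s)/(1+t)$), and on $\|w\|_{\dot H^{s+\sigma/2}}^2$ it is $(s+\sigma/2+1-d/2)/(1+t)$; these are what produce $C_{d,\sigma}=1+\min\{1,(d-\sigma)/2\}$ after rescaling. Second, the paper does not introduce an $\dot H^{-\sigma/2}$ low-frequency norm; it works only with time-weighted $L^2$ norms $n_2,w_2$ and the top-order $Y_{s,\sigma}$, deriving for $Z=n_2+w_2+Y_{s,\sigma}$ the inequality $Z'+C_{d,\sigma}(1+t)^{-1}Z\lesssim Z^2+(1+t)^{-2}Z$ and invoking a Gr\"onwall-type lemma. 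Intermediate $\dot H^\ell$ decay then follows by interpolation.
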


Due to the strong singularity in the Riesz interaction potential, it is required to find some cancellations of top-order terms in the estimates. Clearly, it depends on the reformulation and regularity of solution spaces. We discover that the transformation $n = \sqrt\rho$ and the regularity of solutions $(n, u-v) \in H^s \times H^{s+\frac\sigma2}$ well capture the required cancellations of terms with highest-order derivatives. Here the sharp commutator estimates and various Sobolev inequalities are crucially employed. In addition to that, we analyze the stabilizing effect from the repulsive Riesz interaction potential to get a proper dissipation rate for $n$. Then this together with using a dispersive effect from the condition on the initial velocity $v_0 \in E^s$ provides the global regularity of solutions and the temporal decay estimates.

\begin{remark}\label{rem_decay_np}
Here, we provide some remarks on the temporal decay.
\begin{enumerate}
\item[(i)]
We can rewrite the temporal decay of $n$ in Theorem \ref{main_thm} as 
\bq\label{r1}
\|n(t)\|_{\dot{H}^\ell} \le C(1+t)^{ - \ell -\min \lt\{1-\frac{d-\sigma}{2},\, 0 \rt\} }
\eq
for $\ell \in [0, s]$. When $d>\sigma +2$, equivalently $1-\frac{d-\sigma}{2}<0$, \eqref{r1} does not give any decay estimate when $\ell \le \frac{d-\sigma}{2}-1$. However, once we use the conservation of mass:
\[ 
\|n(t)\|_{L^2}=\|n_0\|_{L^2} 
\]
(see Lemma \ref{low_est} below), and combine this with \eqref{r1} when $\ell =s$ to yield  
\bq\label{decay_imp}
\|n(t)\|_{\dot{H}^\ell} \le C(1+t)^{ - \ell -\min \lt\{\frac{\ell}{s}\lt(1-\frac{d-\sigma}{2}\rt),\, 0 \rt\} }
\eq
via a simple interpolation.
\item[(ii)]
	From Theorem \ref{main_thm} and \eqref{decay_imp}, we use the interpolation argument to obtain temporal decay estimates in $L^p$ for $2 \leq p \leq \infty$:
		\[
	\|(u-v)(t)\|_{L^p} \le  C(1+t)^{\frac dp -\min\lt\{1, \frac{d-\sigma}{2}\rt\}}
\]
 and 
\[
	\|n(t)\|_{L^p} \le C(1+t)^{ -(\frac d2 -\frac dp) - \min \lt\{\frac{1}{s} (\frac d2 -\frac dp)(1-\frac{d-\sigma}{2}),\, 0 \rt\} }
 \]
 for $t \geq 0$.
\end{enumerate}
\end{remark}

%
%
%
%
\subsubsection{The case with pressure} We next consider the pressured case, i.e. $c_P > 0$. We set $c_P=1$ without loss of generality. In this case, we introduce the sound speed  $ n:= \kappa \rho^{\tilde{\gamma}}$ with
\[
\kappa= \frac{2 \sqrt{\gamma}}{\gamma-1} \quad \mbox{and} \quad \tilde{\gamma}=\frac{\gamma-1}{2},
\]
and reformulate system \eqref{ER_main} as the following symmetric form:
\bq\label{ER_main3}
\begin{aligned}
	&\pa_t n + u \cdot \nabla n +\tilde{\gamma} n \nabla \cdot u = 0, \quad (x,t) \in \R^d \times \R_+,\cr
	&\pa_t u + u\cdot \nabla u + \tilde{\gamma} n \nabla n  =  \lambda \nabla \Lambda^{-\sigma} n^{\frac1{\tilde{\gamma}}}
\end{aligned}
\eq
with initial data
\[
(n(x,0),u(x,0)) = ( n_0(x):=\kappa (\rho_0(x))^{\tilde{\gamma}},\, u_0(x)), \quad x\in \R^d.
\]
Before we state our main results for the system \eqref{ER_main} with $c_P = 1$, we intrdouce the following assumption $(\mathcal{A})$ on the initial data and parameters $s,\gamma$, and $\sigma$: 
\begin{enumerate}
\item[($\mathcal{A}$1)]
There exists $v_0 \in E^{s+1}$ satisfying \eqref{sp_cond} and $u_0 - v_0$ is small in $H^s$.
\item[($\mathcal{A}$2)]
$n_0$ is small in $H^s$.
\item[($\mathcal{A}$3)]
If $\frac{2}{\gamma-1}$ is not an integer, assume that
\[
s<\frac{2}{\gamma-1} + \sigma-\frac12.
\]
\end{enumerate}

We first state our main theorem for system \eqref{ER_main} with the attractive or repulsive sub-Manev interactions ($1\leq \sigma <2$).
\begin{theorem}\label{main_thm2}
	Let $d\ge2$, $\sigma \in [1, 2)$, and $s>\frac d2+ 1$. Suppose that the initial data $(n_0, v_0)$ and system parameters $s,\gamma$, and $\sigma$ satisfy standing assumption $(\mathcal{A})$. Then there exists a unique global-in-time regular solution $(n,u-v)$ to \eqref{ER_main3} regardless of the sign of $\lambda$ satisfying
	\[
	n \in \mc(\R_+; H^s) \quad \mbox{and} \quad u-v \in \mc(\R_+; H^{s}).
	\]
	Furthermore, we have the following temporal decay estimates:
	\[
			\|n(t)\|_{\dot{H}^\ell} \le C(1+t)^{\frac d2 - \ell -\min\lt\{1, \frac{d(\gamma-1)}{2}\rt\}}
	\]
	and
	\[
			\|(u-v)(t)\|_{\dot{H}^\ell} \le  C(1+t)^{\frac d2 - \ell -\min\lt\{1,  \frac{d(\gamma-1)}{2} \rt\}}
	\]
	for all $\ell \in [0,s]$ and $t \geq 0$. Here $C > 0$ is independent of $t$.
\end{theorem}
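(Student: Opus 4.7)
The plan is to seek a solution of \eqref{ER_main3} in the form $u = v+w$, where $v$ is the global Burgers flow associated with the initial datum $v_0$ via Proposition~\ref{prop_bur}, and to derive a coupled evolution for the perturbation $(n,w)$. Substituting $u=v+w$ into \eqref{ER_main3} and using \eqref{burgers} for $v$ produces
\begin{align*}
&\pa_t n + (v+w)\cdot \nabla n + \tilde\gamma\, n\, \nabla \cdot (v+w) = 0,\\
&\pa_t w + (v+w)\cdot \nabla w + w\cdot \nabla v + \tilde\gamma\, n\, \nabla n = \lambda\, \nabla \Lambda^{-\sigma} n^{1/\tilde\gamma}.
\end{align*}
Proposition~\ref{prop_bur} supplies the crucial dispersive profile $\nabla v = \mathbb{I}_d/(1+t) + K/(1+t)^2$ with $K$ bounded and algebraically decaying in $\dot H^\ell$, so the background generates a time-decaying damping of rate $\tilde\gamma\,\nabla\cdot v\approx d(\gamma-1)/(2(1+t))$ in the $n$-equation (through $\tilde\gamma\, n\,\nabla\cdot v$) and of rate $1/(1+t)$ in the $w$-equation (through $w\cdot \nabla v$). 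The claimed decay exponent $\min\{1,d(\gamma-1)/2\}$ is precisely the minimum of these two rates.

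I then carry out symmetric $H^s\times H^s$ energy estimates on $(n,w)$. For $0\leq \ell\leq s$, applying $\nabla^\ell$ and pairing against $\nabla^\ell n$ and $\nabla^\ell w$ respectively, the symmetric hyperbolic structure cancels the highest-order contributions of $\tilde\gamma\,\nabla^\ell(n\,\nabla\cdot w)$ against $\tilde\gamma\,\nabla^\ell(n\nabla n)$, leaving only Kato--Ponce-type commutators. Since $\sigma\in[1,2)$, the Fourier multiplier $\nabla\Lambda^{-\sigma}$ is smoothing of order $\sigma-1\geq 0$, so that
\[
\lt\|\nabla\Lambda^{-\sigma} n^{1/\tilde\gamma}\rt\|_{\dot H^\ell}\lesssim \lt\|n^{1/\tilde\gamma}\rt\|_{\dot H^{\ell+1-\sigma}},
\]
and no loss of derivatives occurs. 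The composition $n\mapsto n^{1/\tilde\gamma}=n^{2/(\gamma-1)}$ is controlled by the standard Sobolev composition inequality; when the exponent $2/(\gamma-1)$ is not an integer, the range of validity of this inequality is precisely what forces the upper bound on $s$ imposed in $(\mathcal{A}3)$.

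To capture the decay rates, I work with a time-weighted $H^s$ energy of the form
\[
\me_\ell(t) := (1+t)^{2\alpha_\ell}\lt(\|n(t)\|_{\dot H^\ell}^2 + \|w(t)\|_{\dot H^\ell}^2\rt),\qquad \alpha_\ell := \ell - \frac d2 + \min\lt\{1,\frac{d(\gamma-1)}{2}\rt\},
\]
for $\ell\in[0,s]$, together with a cumulative functional built from these. Differentiating in time and combining the background damping from Proposition~\ref{prop_bur} with the symmetric cancellations above yields a Gr\"onwall-type inequality in which the nonlinear and commutator contributions scale as $(1+t)^{-1}$ times a higher power of the energy; these are absorbed under the smallness hypotheses $(\mathcal{A}1)$--$(\mathcal{A}2)$, and combined with the local well-posedness of \cite{CJ22} a standard continuity/bootstrap argument closes the global result and the claimed decay. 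The main obstacle is reconciling the non-integer composition $n\mapsto n^{2/(\gamma-1)}$ with the symmetric-hyperbolic cancellation and the Riesz smoothing: the non-integer exponent forces the regularity cap of $(\mathcal{A}3)$, while obtaining the sharp decay rate $\min\{1,d(\gamma-1)/2\}$ uniformly in $\ell\in[0,s]$ demands a delicate interplay between the principal damping $\mathbb{I}_d/(1+t)$ and the remainder $K/(1+t)^2$ so that the weighted energy remains coercive at every derivative level.
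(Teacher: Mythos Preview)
Your plan is correct and essentially identical to the paper's: the substitution $u=v+w$, the symmetric $\dot H^s$ cancellation between $\tilde\gamma\, n\,\nabla\cdot w$ and $\tilde\gamma\, n\,\nabla n$, the treatment of $\nabla\Lambda^{-\sigma}n^{1/\tilde\gamma}$ as a zeroth-order (or smoothing) perturbation via Lemma~\ref{tech_4} when $\sigma\ge 1$, the time-weighted energies built from $(1+t)^{\ell-d/2}$-type scalings, and the bootstrap closure under smallness are exactly the ingredients of Appendix~\ref{app.A} (Lemmas~\ref{high_est_app}--\ref{temp_est_app}).

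One point you should not gloss over: the Riesz contribution does \emph{not} scale like ``$(1+t)^{-1}$ times a higher power of the energy.'' In the paper's weighted variables it produces a term of the form $\w Z^{1/\tilde\gamma}/(1+t)^{2-\sigma-1/\tilde\gamma}$, whose temporal exponent is in general strictly below $1$; closing the differential inequality therefore requires the tailored Gr\"onwall result Lemma~\ref{tech_6} and imposes the compatibility condition $\sigma<\min\{1,d\tilde\gamma\}(1/\tilde\gamma-1)$, equivalently an upper bound on $\gamma$. Your direct choice of weight $\alpha_\ell=\ell-\tfrac d2+\min\{1,\tfrac{d(\gamma-1)}2\}$ will run into the same issue, so you should check this balance explicitly rather than asserting absorbability.
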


\begin{remark}\label{rem2}
We provide some remarks on the result of Theorem \ref{main_thm2}.
\begin{enumerate}
\item[(i)] Since we are assuming $\sigma \ge 1$, we naturally exclude the case $d=1$ in Theorem \ref{main_thm2}.
\item[(ii)] The case $\sigma=2$ in Theorem \ref{main_thm2} can also be covered by the argument in this paper.	
\item[(iii)] For the case $\sigma\in [1, 2]$, authors in \cite{DD22} studied the global well-posedness result of \eqref{ER_main3} in the solution space $(n, u-v)\in \mathcal{C}(\mathbb{R}_{+}; \dot{H}^2 \cap \dot{H}^s \cap L^{\infty} \cap L^q ) \times \mathcal{C}(\mathbb{R}_{+};\dot{H}^2 \cap \dot{H}^s \cap L^{\infty})$ for some $q>1$ under the same conditions on $d,s$, and $\gamma$ of Theorem \ref{main_thm2}. Compared to that result, our result deals with the system \eqref{ER_main3} in a usual Sobolev space, and we are also able to provide the $\dot{H}^{\ell}$ decay estimates of $u-v$ for $0<\ell<2$.
\end{enumerate}
\end{remark}

Finally, in the super-Manev potential case ($0<\sigma <1$),  we separately present our main results in the repulsive (i.e. $\lambda=-1$) and attractive (i.e. $\lambda=1$) cases, respectively.
\begin{theorem}[repulsive case]\label{main_thm3}
	Let $d\ge1$, $\sigma \in (0, 1)$, and $s>\max\{2+\frac\sigma 2, \frac{d}{2} + 1\}$.  Suppose that the initial data $(n_0, v_0)$ and system parameters $s,\gamma$, and $\sigma$ satisfy standing assumption $(\mathcal{A})$. Moreover, we assume that $1<\gamma \leq 2$ and
	\[
\gamma < 	1+\frac{2(d-\sigma)}{d+\sigma}\quad \text{if } d=1,2. 
\]
Then there exists a unique global-in-time regular solution $(n,u-v)$ to \eqref{ER_main3} in the repulsive case $\lambda=-1$ satisfying
	\[
	n \in \mc(\R_+; H^s) \quad \mbox{and} \quad u-v \in \mc(\R_+;H^{s}).
	\]
	Furthermore, one also has the following temporal decay estimates: 
	\[
			\|n(t)\|_{\dot{H}^\ell} \le C(1+t)^{\frac d2 - \ell -\min\lt\{1,\, \frac{d(\gamma-1)}{2},\, \frac{d-\sigma}{2}\rt\}}
	\]
	and
	\[
			\|(u-v)(t)\|_{\dot{H}^\ell} \le  C(1+t)^{\frac d2 - \ell -\min\lt\{1,\, \frac{d(\gamma-1)}{2},\,\frac{d-\sigma}{2} \rt\}}
	\]
	for all $\ell \in [0,s]$ and $t \geq 0$. Here $C > 0$ is independent of $t$.
\end{theorem}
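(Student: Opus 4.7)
The plan is to mirror the perturbative strategy of Theorems \ref{main_thm} and \ref{main_thm2}: decompose $u=v+\bar u$ with $v$ the Burgers flow \eqref{burgers}, so Proposition \ref{prop_bur} supplies the dispersive spreading, and run a closed $H^s$ energy estimate for $(n,\bar u)$. Subtracting $v$ from the $u$-equation of \eqref{ER_main3} with $\lambda=-1$ yields
\begin{align*}
\pa_t n + (\bar u+v)\cdot\nabla n + \tilde\gamma\, n\,\nabla\cdot(\bar u+v) &= 0,\\
\pa_t \bar u + (\bar u+v)\cdot\nabla \bar u + (\bar u\cdot\nabla)v + \tilde\gamma\, n\,\nabla n &= -\nabla\Lambda^{-\sigma} n^{1/\tilde\gamma}.
\end{align*}
By \eqref{dv} and \eqref{divv}, $(\bar u\cdot\nabla)v$ and $\tilde\gamma n\,\nabla\cdot v$ act as effective damping of rate $\sim (1+t)^{-1}$; this is the mechanism that drives the algebraic decay. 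Applying $\pa^\alpha$ with $|\alpha|\le s$ and testing against $\pa^\alpha n$ and $\tilde\gamma^{-1}\pa^\alpha\bar u$, the symmetric-hyperbolic cancellation between $\tilde\gamma n\nabla n$ and $\tilde\gamma n\nabla\cdot u$ removes the top-order pressure/convection piece, while commutators $[\pa^\alpha,u\cdot\nabla]$ and $[\pa^\alpha,n]\nabla(\cdot)$ are handled by Kato--Ponce and Moser estimates under the smallness of $\|n_0\|_{H^s}$ and $\|u_0-v_0\|_{H^s}$.

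The main obstacle is the Riesz forcing $\nabla\Lambda^{-\sigma} n^{1/\tilde\gamma}$, which in the super-Manev regime $\sigma\in(0,1)$ is an operator of \emph{positive} order $1-\sigma$; a direct bound would require $H^{s+1-\sigma}$ regularity on $n^{1/\tilde\gamma}$ that we do not possess. I would exploit the repulsive sign in two ways. First, integration by parts rewrites $-\langle\pa^\alpha\bar u,\pa^\alpha\nabla\Lambda^{-\sigma} n^{1/\tilde\gamma}\rangle=\langle\pa^\alpha\nabla\cdot\bar u,\pa^\alpha\Lambda^{-\sigma} n^{1/\tilde\gamma}\rangle$, and substituting the continuity equation $\nabla\cdot\bar u\approx -(\tilde\gamma n)^{-1}\pa_t n+\text{l.o.t.}$ produces a non-negative contribution of the schematic form $\tfrac12\tfrac{d}{dt}\|\Lambda^{-\sigma/2}\pa^\alpha n^{1/(2\tilde\gamma)}\|_{L^2}^2$ that is absorbed into a modified energy $\mathcal{E}_s$. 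Second, the hypothesis $1<\gamma\le 2$ forces $1/\tilde\gamma=2/(\gamma-1)\ge 2$, so $n^{1/\tilde\gamma}$ vanishes at least quadratically at $n=0$, and a fractional Moser bound gives $\|n^{1/\tilde\gamma}\|_{H^{s}}\lesssim \|n\|_{L^\infty}\|n\|_{H^{s}}$, supplying precisely the extra $\|n\|_{L^\infty}$ factor needed to compensate the $1-\sigma$ derivative loss (assumption $(\mathcal A3)$ accommodates the non-integer exponent). The additional restriction $\gamma<1+2(d-\sigma)/(d+\sigma)$ for $d=1,2$ is exactly what renders the Hardy--Littlewood--Sobolev estimate of $\Lambda^{-\sigma} n^{1/\tilde\gamma}$ integrable against the dispersive weight $(1+t)^{-1}$, thereby producing the $(d-\sigma)/2$ term in the final decay exponent.

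Combining these estimates yields a differential inequality of the form $\frac{d}{dt}\mathcal{E}_s\le -\frac{c}{1+t}\mathcal{E}_s+\mathcal{N}(\mathcal{E}_s)$ with super-quadratic $\mathcal{N}$, and a continuity argument based on the local well-posedness of \cite{CJ22} then gives the global solution for sufficiently small initial data. The $\dot H^\ell$ decay rates for $\ell\in[0,s]$ follow by balancing the three competing mechanisms, namely the dispersive background flow (giving the cap $1$), the pressure nonlinearity $n^{1/\tilde\gamma}$ (giving the $d(\gamma-1)/2$ factor via the HLS decay of the Riesz convolution of a function with integer-or-higher vanishing), and the Riesz smoothing (giving the $(d-\sigma)/2$ factor), followed by interpolation with the base-level conservation $\|n(t)\|_{L^2}=\|n_0\|_{L^2}$ in the spirit of Remark \ref{rem_decay_np}(i).
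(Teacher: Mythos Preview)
Your overall strategy is correct and matches the paper's: perturb around the Burgers flow, exploit the symmetric-hyperbolic pressure cancellation, and augment the $H^s$ energy with an auxiliary quantity to absorb the top-order Riesz contribution. Your ``first mechanism'' is essentially what the paper does, though the precise object is $\int n^{1/\tilde\gamma-2}|\Lambda^{s-\sigma/2}n|^2\,dx$ rather than $\|\Lambda^{-\sigma/2}\pa^\alpha n^{1/(2\tilde\gamma)}\|_{L^2}^2$ (the two agree modulo commutators, since the leading term of $\Lambda^{s-\sigma/2}(n^{1/(2\tilde\gamma)})$ is $\tfrac{1}{2\tilde\gamma}n^{1/(2\tilde\gamma)-1}\Lambda^{s-\sigma/2}n$). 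The cancellation, however, is not obtained by ``substituting $\nabla\cdot\bar u\approx -(\tilde\gamma n)^{-1}\pa_t n$'' (which would force a division by $n$ near vacuum). Instead, one differentiates the weighted energy directly; the term $\tilde\gamma\, n\,\nabla\cdot w$ in the continuity equation produces $-\tilde\gamma\int n^{1/\tilde\gamma-1}\Lambda^{s-\sigma/2}n\,\nabla\cdot\Lambda^{s-\sigma/2}w\,dx$, which after shifting $\nabla\Lambda^{-\sigma/2}$ and a commutator (Lemma~\ref{tech}\,(iv)) becomes $\tilde\gamma\int n^{1/\tilde\gamma-1}\nabla\Lambda^{s-\sigma}n\cdot\Lambda^s w\,dx$. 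This cancels the Riesz term from the $w$-equation once the latter has been reduced, via $[\Lambda^{s-\sigma},n^{1/\tilde\gamma-1}]\nabla n$, to $\tfrac{1}{\tilde\gamma}\int n^{1/\tilde\gamma-1}\Lambda^s w\cdot\nabla\Lambda^{s-\sigma}n\,dx$.

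Your ``second mechanism'' does not work as stated: an extra factor of $\|n\|_{L^\infty}^{1/\tilde\gamma-1}$ from the Moser bound provides smallness or time decay, but cannot compensate a genuine $1-\sigma$ derivative deficit at the top order. The Moser estimate (Lemma~\ref{tech_4}) is indeed used, but only to bound the commutator remainders \emph{after} the main cancellation has removed the dangerous term; it is not a stand-alone route to closure. Two smaller corrections: the restriction $\gamma<1+\tfrac{2(d-\sigma)}{d+\sigma}$ for $d=1,2$ arises not from HLS integrability but from the closure condition in the Gr\"onwall-type Lemma~\ref{tech_6} applied to the resulting differential inequality; and the identity $\|n(t)\|_{L^2}=\|n_0\|_{L^2}$ you invoke at the end is specific to the pressureless case $n=\sqrt\rho$ and fails here, since Lemma~\ref{pres_est1} shows $\|n\|_{L^2}$ evolves with a nontrivial $(1+t)^{-1}$ coefficient whenever $\tilde\gamma\neq\tfrac12$.
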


\begin{theorem}[attractive case]\label{main_thm4}
	Let $d\ge1$, $\sigma \in (0, 1)$ and $s>\max\{3-\frac{\sigma}{2}, \frac{d}{2} + 1\}$.  Suppose that the initial data $(n_0, v_0)$ and system parameters $s,\gamma$ and $\sigma$ satisfy standing assumption $(\mathcal{A})$.	We further assume that $1<\gamma \leq 2-\frac\sigma d$ and 
\[
\gamma < 1+ \frac{2}{\sigma+2} \quad \text {if } d\ge 3.
\]
	Then there exists a unique global-in-time regular solution $(n,u-v)$ to \eqref{ER_main3} in the attractive case $\lambda=1$ satisfying
	\[
	n \in \mc(\R_+; H^s) \quad \mbox{and} \quad u-v \in \mc(\R_+; H^{s}).
	\]
	Furthermore, one also has the following temporal decay estimates: 
	\[
			\|n(t)\|_{\dot{H}^\ell} \le C(1+t)^{\frac d2 - \ell -\min\lt\{1,\, \frac{d(\gamma-1)}{2}\rt\}}
	\]
	and
	\[
			\|(u-v)(t)\|_{\dot{H}^\ell} \le  C(1+t)^{\frac d2 - \ell -\min\lt\{1,\, \frac{d(\gamma-1)}{2} \rt\}}
	\]
	for all $\ell \in [0,s]$ and $t \geq 0$. Here $C > 0$ is independent of $t$.
\end{theorem}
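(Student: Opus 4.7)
The proof plan follows the scheme used for Theorems \ref{main_thm2} and \ref{main_thm3}, with the two simultaneous complications that the Riesz operator $\nabla\Lambda^{-\sigma}$ loses derivatives when $\sigma<1$ and that the sign of the potential is now unfavorable ($\lambda=+1$). First I would subtract the Burgers background by setting $w:=u-v$, with $v$ the global classical solution from Proposition \ref{prop_bur}, so that \eqref{ER_main3} becomes a coupled symmetric hyperbolic system for $(n,w)$ driven by $v$ through $\nabla v = \mathbb{I}_d/(1+t)+K/(1+t)^2$. Local well-posedness in $\mc([0,T];H^s\times H^s)$ follows from the arguments in \cite{CJ22}, and the theorem reduces to closing weighted a priori estimates.

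Next I would run the symmetric $H^s$ energy estimate on $(n,w)$: apply $\nabla^\ell$ for $0\le\ell\le s$, test against $\nabla^\ell n$ and $\nabla^\ell w$, and exploit the cancellation between $\tilde{\gamma}n\nabla n$ in the momentum equation and $\tilde{\gamma}n\nabla\cdot w$ in the continuity equation, so that the top-order symmetric hyperbolic piece integrates to zero. What remains splits into Kato--Ponce commutators, the Burgers transport term $v\cdot\nabla(\nabla^\ell n,\nabla^\ell w)$, the stretching contribution $\int(\nabla^\ell w)\cdot(\nabla^\ell w\cdot\nabla v)$, and the Riesz force. Using \eqref{dv} and \eqref{divv}, the transport-plus-stretching pieces combine into a coercive rate $-\beta_\ell(1+t)^{-1}\|\nabla^\ell(n,w)\|_{L^2}^2$ with $\beta_\ell>0$ increasing in $\ell$, up to a lower-order $(1+t)^{-2}$ remainder controlled through $K$.

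The hard part is the attractive Riesz term $\nabla\Lambda^{-\sigma}n^{1/\tilde{\gamma}}$: because $\sigma<1$ the operator has positive order $1-\sigma$, and the naive estimate at top level requires $\|n^{1/\tilde{\gamma}}\|_{H^{s+1-\sigma}}$, which is not controlled by the energy. I plan to circumvent this by integrating by parts to move a divergence onto $w$, then trading $\nabla\cdot w$ using the continuity equation so that the worst contribution is absorbed into the time derivative of a Riesz-potential quantity of the shape $\int n^{1/\tilde{\gamma}}\Lambda^{-\sigma}n^{1/\tilde{\gamma}}$ plus manageable flux terms, mirroring the standard trick for Euler--Poisson. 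The remaining pieces are then closed by fractional Moser/Kato--Ponce estimates applied to the composition $n\mapsto n^{1/\tilde{\gamma}}$; assumption $(\mathcal{A}3)$ is exactly the threshold under which these hold when $2/(\gamma-1)$ is not an integer, and the $L^\infty$ factor that appears is absorbed into the anticipated pointwise decay $\|n\|_{L^\infty}\lesssim(1+t)^{-d(\gamma-1)/2}$. The restriction $\gamma\le 2-\sigma/d$ is precisely what makes the resulting time factor integrable against $(1+t)^{-1}$; the auxiliary bound $\gamma<1+2/(\sigma+2)$ for $d\ge3$ plays the analogous role at an intermediate scale where Bernstein/embedding is borderline; and the regularity threshold $s>3-\sigma/2$ ensures $s+1-\sigma>d/2+1$ so that Moser and Kato--Ponce close in $H^{s+1-\sigma}$.

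A bootstrap on weighted energies $\me_\ell(t):=(1+t)^{2\alpha_\ell}\bigl(\|\nabla^\ell n\|_{L^2}^2+\|\nabla^\ell w\|_{L^2}^2\bigr)$ with $\alpha_\ell=\ell-d/2+\min\{1,d(\gamma-1)/2\}$, $\ell\in[0,s]$, then closes the argument: combining the coercive $-\beta_\ell/(1+t)$ from the energy step with the integrable Riesz remainder and running Gronwall under the smallness of $\|n_0\|_{H^s}+\|u_0-v_0\|_{H^s}$ yields simultaneously the uniform $H^s$ bound and the advertised decay rates. The most delicate part, and what I expect to be the main obstacle, is to verify that every nonlinear contribution --- and particularly the reconstructed Riesz remainder --- once weighted, is strictly better than $(1+t)^{-1}\me_\ell$; it is exactly there that the sharpness of the assumptions on $\gamma$, $s$, and $\sigma$ in Theorem \ref{main_thm4} becomes unavoidable.
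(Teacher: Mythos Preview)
Your overall scaffolding --- subtract the Burgers flow, run a weighted $H^s$ energy, feed everything into a Gr\"onwall-type lemma --- matches the paper. The gap is in your treatment of the attractive Riesz term at top order.

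You propose to ``trade $\nabla\cdot w$ using the continuity equation so that the worst contribution is absorbed into the time derivative of a Riesz-potential quantity of the shape $\int n^{1/\tilde\gamma}\Lambda^{-\sigma}n^{1/\tilde\gamma}$.'' This does not close. First, in the attractive case $\lambda=+1$ that potential quantity enters the total energy with a \emph{negative} sign, so even if you could manufacture it, it is not coercive and cannot absorb anything; it must itself be dominated. Second, the trick you describe is a zeroth-order identity (it is the physical energy balance for $\rho=\kappa^{-1/\tilde\gamma}n^{1/\tilde\gamma}$), and lifting it to order $s$ requires commuting $\Lambda^s$ through $n^{1/\tilde\gamma}\nabla\cdot w$, which reproduces exactly the $s+1-\sigma$ derivatives on $n$ that you are trying to avoid. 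So the derivative loss is not removed, only relocated.

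The paper handles this term by a completely different mechanism: the \emph{regularity-tuning iteration} of \cite{CJLpre}, which uses the \emph{pressure}, not the Riesz potential, to absorb the bad term. Writing
\[
\mathcal{R}_s(k)=\int n^{k(\frac{1}{\tilde\gamma}-2)}\,\Lambda^{s-\frac{k\sigma}{2}}w\cdot\nabla\Lambda^{s-\frac{(k+2)\sigma}{2}}(n^{1/\tilde\gamma})\,dx,
\qquad
\mathcal{P}_s(k)=\tilde\gamma\int n^{k(\frac{1}{\tilde\gamma}-2)}\,\Lambda^{s-\frac{k\sigma}{2}}w\cdot\Lambda^{s-\frac{k\sigma}{2}}(n\nabla n)\,dx,
\]
the top-order Riesz term is $\mathcal{R}_s(0)$. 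Lemma~\ref{Gap} shows $\mathcal{R}_s(k)\approx\tilde\gamma^{-2}\mathcal{P}_s(k+1)$ up to harmless errors, and Lemma~\ref{higher_it} shows that the weighted energy $\frac{d}{dt}\int n^{k(\frac{1}{\tilde\gamma}-2)}|\Lambda^{s-\frac{k\sigma}{2}}w|^2$ produces precisely $-\mathcal{P}_s(k)+\mathcal{R}_s(k)$. Summing $\tilde\gamma^{-2k}$ times the $k$-th weighted estimate telescopes: each $\mathcal{P}_s(k)$ cancels against $\mathcal{R}_s(k-1)$, and after $k_0\sim 2(1-\sigma)/\sigma$ steps the leftover $\mathcal{R}_s(k_0)$ has at most $s$ derivatives on $n$ and can be bounded directly. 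The conditions $\gamma\le 2-\sigma/d$ and (for $d\ge3$) $\gamma<1+2/(\sigma+2)$ are exactly what make the accumulated time weights in this iteration satisfy the hypothesis $c_i<ab_i$ of Lemma~\ref{tech_6}; your explanation of these thresholds as ``integrability against $(1+t)^{-1}$'' is morally right but attached to the wrong estimate. Finally, the requirement $s>3-\sigma/2$ is used not for Moser/Kato--Ponce at order $s+1-\sigma$ as you suggest, but in the uniqueness argument, where the stability estimate is run in $H^{2-\sigma/2}$ and one needs $n_i\in H^s$ with $s>3-\sigma/2$ to control factors like $\|\Lambda^{2-\frac{3\sigma}{2}}\nabla n_2\|$.
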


As briefly mentioned above, in the super-Manev potential case $(0 < \sigma <1)$, we lose some differentiability of the interaction force $\nabla \Lambda^{-\sigma} \rho$. Thus, the classical energy estimates as in the sub-Manev case would not be directly applicable to that case. In the repulsive case, we utilize the arguments developed in Section \ref{ssec:px} for the pressureless case; we combine $\dot{H}^s$-estimates of $(n,u-v)$ with the weighted $\dot{H}^{s-\frac\sigma2}$-estimates of $n$ to cancel out the repulsive interaction force term arising from the $\dot{H}^s$-estimates of $u-v$.   On the other hand, the attractive case is more delicate, and it is essential to use a dissipation effect from the pressure appropriately to handle the attractive super-Manev interaction forces. Since the interaction potential is of a slightly lower order than the pressure, we employ the {\it regularity-tuning} iteration argument developed in \cite{CJLpre}, see Section \ref{ssec:attpo} for details. This enables us to handle the attractive super-Manev interaction term by the pressure term. Again, the proofs strongly rely on the energy method based on the sharp commutator estimates for the fractional Laplacian and various Sobolev embeddings.

\begin{remark}
We also provide some remarks concerning all the results in Theorems \ref{main_thm2}-\ref{main_thm4}.
\begin{enumerate}
\item[(i)] The different conditions on $s$ are used in each theorem.

\item[(ii)] The conditions on $\gamma$ in Theorems \ref{main_thm2}-\ref{main_thm4} cover the range $1<\gamma\leq  1+\frac2d$ when $d \ge 3$. 

\item[(iii)] If we impose an assumption $1<\gamma\leq 2 - \frac\sigma d$ for Theorem \ref{main_thm3} in the repulsive case, then all conditions of $\gamma$ are automatically satisfied when $d\ge1$, and the decay estimates are exactly the same as that of Theorem \ref{main_thm4} for the attractive case.

\item[(iv)] If $\frac{2}{\gamma-1}$ is not an integer, $n^{\frac{2}{\gamma-1}}$ is handled by Lemma \ref{tech_4} in the homogeneous Sobolev space, and thus the additional condition $s<\frac{2}{\gamma-1}+\sigma -\frac 12$ is required in Theorems \ref{main_thm2}-\ref{main_thm4}, otherwise it is controlled by \eqref{KP_ineq} below without any further assumptions. In this regard, no additional condition is required in Theorem \ref{main_thm}.

\item[(v)] To prove the uniqueness, we do not require any additional condition on $s$ when $\sigma \in [1,2)$. However, in the case $\sigma \in (0,1)$, we need the additional conditions $s>2+\frac\sigma2$ and $s>3-\frac\sigma2$ for repulsive ($\lambda=-1$) and attractive case ($\lambda=1$), respectively. 

\item[(vi)]
	By the classical interpolation argument, a global-in-time regular solution $(n,u-v)$ to \eqref{ER_main3} satisfies the following $L^p$ decay estimates:
	\begin{itemize}
		\item[(a)] For $1\leq \sigma <2$ and $\lambda=\pm 1$, we have
		\[
		\|n(t)\|_{L^p} \le C(1+t)^{\frac dp-\min\lt\{1, \frac{d(\gamma-1)}{2} \rt\}}\]
		and
		\[\|(u-v)(t)\|_{L^p} \le  C(1+t)^{\frac dp -\min\lt\{1, \frac{d(\gamma-1)}{2}\rt\}}
		\]
		if $\max \{d, \frac{2}{\gamma-1}\}\leq p\leq \infty$ under the same conditions of Theorem \ref{main_thm2}.
		\item[(b)] For $0<\sigma<1$ and $\lambda=-1$ (repulsive case), we obtain
		\[
		\|n(t)\|_{L^p} \le C(1+t)^{\frac dp-\min\lt\{1, \frac{d(\gamma-1)}{2},\, \frac{d-\sigma}{2}\rt\} } \]
		and 
		\[ \|(u-v)(t)\|_{L^p} \le  C(1+t)^{\frac dp -\min\lt\{1, \frac{d(\gamma-1)}{2},\, \frac{d-\sigma}{2}\rt\}}
		\]
		if $\max \{d, \frac{2}{\gamma-1},\, \frac{2d}{d-\sigma}\}\leq p\leq \infty$ under the same conditions of Theorem \ref{main_thm3}.
		\item[(c)] For $0< \sigma <1$ and $\lambda=1$ (attractive case), we duduce
		\[
		\|n(t)\|_{L^p} \le C(1+t)^{\frac dp-\min\lt\{1, \frac{d(\gamma-1)}{2} \rt\}}\]
		and
		\[
		 \|(u-v)(t)\|_{L^p} \le  C(1+t)^{\frac dp -\min\lt\{1, \frac{d(\gamma-1)}{2}\rt\}}
		\]
		if $\max \{d, \frac{2}{\gamma-1}\}\leq p\leq \infty$ under the same conditions of Theorem \ref{main_thm4}.
	\end{itemize}
Here $C > 0$ is independent of $t$. Note that similar estimates hold for any $p \in [2,\infty]$ but they might not be decay estimates depending on the choice of $p$.
\item[(vii)]

	The decay rate of $(n,u-v)$ observed in  Theorems \ref{main_thm2} and \ref{main_thm4} coincides with that of solutions to the compressible Euler equations, see \cite{GS97, G98}.
\end{enumerate}
\end{remark}

%
%
%
%
\subsection{Organization of the paper}

The rest of this paper is organized as follows. In Section \ref{sec:2}, we list useful inequalities and provide a Gr\"onwall-type lemma. Sections \ref{sec:3} and \ref{sec:4} are devoted to establishing global-in-time well-posedness and temporal decay estimates of regular solutions to the Euler--Riesz system without and with pressure, respectively.

\vspace{0.4cm}

%
%
%
%

\section{Preliminaries}\label{sec:2}
\setcounter{equation}{0}
In this section, we present several technical lemmas that will be used throughout this paper. 
\begin{lemma}\label{tech}
	\begin{enumerate}
	\item[(i)] 
	Let $s>0$, $ r \in (1, \infty)$ and $p_1, p_2, q_1, q_2 \in (1, \infty] $. 
Then we have
\bq\label{KP_ineq}
\|\Lambda^s (fg)\|_{L^r} \lesssim  \|\Lambda^s f\|_{L^{p_1}}\|g\|_{L^{q_1}} + \|f\|_{L^{p_2}}\|\Lambda^s g\|_{L^{q_2}}
\eq
for $f\in \dot{W}^{s, p_1} \cap L^{p_2}$ and $g\in L^{q_1} \cap \dot{W}^{s, q_2}$, where 
$\frac 1r = \frac1{p_1} + \frac1{q_1} = \frac1{p_2} + \frac1{q_2}$.

		\item[(ii)] 
		 Let $s>0$. If $f\in \dot{W}^{1, \infty} \cap \dot{H}^s$ and $g\in L^{\infty} \cap \dot{H}^{s-1}$, then we obtain
		\bq\label{tech_1}
		\|[f, \Lambda^s]g\|_{L^2} \lesssim \|f \|_{\dot{H}^s}\|g\|_{L^\infty} + \|\nabla f\|_{L^\infty}\|g\|_{\dot{H}^{s-1}}.
		\eq
		
		\item[(iii)]
 Let $s>1$. If $f\in \dot{W}^{2, \infty} \cap \dot{H}^s $ and $g \in L^{\infty} \cap \dot{H}^{s-2}$, then we get
	\bq\label{tech_2}
	\| [f, \Lambda^s]g - s\nabla f \cdot \Lambda^{s-2}\nabla g\|_{L^2}\lesssim \|f\|_{\dot{H}^s}\|g\|_{L^\infty}  + \|\nabla^2 f\|_{L^\infty} \|g\|_{\dot{H}^{s-2}}.
	\eq
	
		\item[(iv)]
		 Let $0<\sigma<1$. If $\Lambda^{1-\sigma} f\in L^{\infty} \cap \dot{H}^{\frac{d}{2}}$ and $g\in \dot{W}^{\frac{\sigma}{2}, 2}$,
		 then we deduce
\bq\label{tech_5}
		 \begin{aligned}
		 	\| [\nabla \Lambda^{-\frac \sigma 2}, f ]g \|_{L^2}
		 	&\lesssim \|\Lambda^{1-\sigma} f\|_{L^\infty} \|\Lambda^{\frac\sigma2} g\|_{L^2}+ \|f\|_{\dot{H}^{\frac{d}{2}+1-\sigma}} \|\Lambda^{\frac{\sigma}{2}}g\|_{L^{2}}.
		 \end{aligned}
\eq
		\item[(v)]
Let $1\leq s<2$, then the following inequality holds 
\bq\label{moser_2term}
\begin{aligned}
&\| [\Lambda^s, f ]g \|_{L^2} 
\lesssim \| \Lambda^{1} f \|_{L^{\infty}}\| \Lambda^{s-1} g \|_{L^2}  +\|\nabla f\|_{L^{\infty}}\|\Lambda^{s-1} g\|_{L^2} +\|g\Lambda^s f\|_{L^2}\\
\end{aligned}
\eq
for $\Lambda f , \nabla f\in L^{\infty}$, and $g\Lambda^s f \in L^2$. 
Moreover, when $0<s<1$, we have  
\[\| [\Lambda^s, f ]g \|_{L^2} 
\lesssim \| \Lambda^{s_1} f \|_{L^{\infty}}\| \Lambda^{s-s_1} g \|_{L^2}  +\| g\Lambda^s  f\|_{L^2}
\]
for any $s_1<s$.
	\end{enumerate}
Here $a \ls b$ represents that there is a positive constant such that $a \leq C b$.
\end{lemma}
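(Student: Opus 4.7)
The plan is to prove all five estimates uniformly via Littlewood--Paley/paradifferential calculus, based on Bony's decomposition
\[
fg = T_f g + T_g f + R(f,g),
\]
with the paraproduct $T_f g = \sum_j S_{j-2}f\,\Delta_j g$ (low times high) and the remainder $R(f,g)=\sum_{|j-k|\le 1}\Delta_j f\,\Delta_k g$ (high times high). Parts (i) and (ii) are the classical Kato--Ponce fractional Leibniz and commutator inequalities, due to Kato--Ponce and refined by Grafakos--Oh and Muscalu--Schlag, and one can either quote them directly or re-derive them: apply $\Lambda^s$ to each piece of Bony's splitting, use that $\Lambda^s$ almost commutes with $T_f$ modulo a commutator that gains one derivative on $f$ (via a mean-value argument on the symbol $|\xi|^s$ at high frequency), and sum in $j$ via Plancherel and the Fefferman--Stein maximal inequality.

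For (iii) one needs a second-order symbol expansion: for $|\eta|\ll|\xi|$,
\[
|\xi|^s - |\xi-\eta|^s = s\,|\xi|^{s-2}\xi\cdot\eta + O\bigl(|\eta|^2 |\xi|^{s-2}\bigr).
\]
Since the high-low regime is exactly $T_f g$, subtracting $s\,T_{\nabla f}\cdot\Lambda^{s-2}\nabla g$ from $[\Lambda^s,T_f]g$ kills the linear Taylor term and leaves a quadratic remainder controlled by $\|\nabla^2 f\|_{L^\infty}\|g\|_{\dot{H}^{s-2}}$. The balanced terms $\Lambda^s(T_g f+R(f,g))$, together with the discrepancy between $s\,\nabla f\cdot\Lambda^{s-2}\nabla g$ and its paraproduct version $s\,T_{\nabla f}\cdot\Lambda^{s-2}\nabla g$, are absorbed into $\|f\|_{\dot{H}^s}\|g\|_{L^\infty}$ via (i). Part (v) is the parallel first-order version: for $1\le s<2$ one runs the same scheme but without the second-order cancellation, so the balanced paraproduct $T_g(\Lambda^s f)$ directly produces the $\|g\Lambda^s f\|_{L^2}$ term; for $0<s<1$ one exploits the freedom to split $s=s_1+(s-s_1)$ in the mean-value step and distribute derivatives accordingly.

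For (iv) the operator $\nabla\Lambda^{-\sigma/2}$ has symbol of positive order $1-\sigma$ but is singular at $\xi=0$, so the argument splits into low and high frequencies. On the high-low piece $[\nabla\Lambda^{-\sigma/2},T_f]g$, the mean-value trick on the symbol gains one derivative on $f$, giving $\|\Lambda^{1-\sigma}\nabla f\|_{L^\infty}\|\Lambda^{\sigma/2}g\|_{L^2}$; a Bernstein-type redistribution on the dyadic blocks of $f$ recasts this as $\|\Lambda^{1-\sigma}f\|_{L^\infty}\|\Lambda^{\sigma/2}g\|_{L^2}$. For $\nabla\Lambda^{-\sigma/2}T_g f$ and $\nabla\Lambda^{-\sigma/2}R(f,g)$ all derivatives sit on $f$, and the Sobolev embedding $\dot{H}^{d/2}\hookrightarrow L^\infty$ (summable after Littlewood--Paley localization) produces $\|f\|_{\dot{H}^{d/2+1-\sigma}}\|\Lambda^{\sigma/2}g\|_{L^2}$.

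The main obstacle I foresee is twofold: first, handling the low-frequency block in (iv), where the singularity of $|\xi|^{-\sigma/2}$ at the origin forces a separate treatment that must still be absorbed into the stated norms of $f$; second, tracking the quadratic Taylor remainder in (iii) carefully through the Littlewood--Paley summation to confirm that it produces exactly the $\|\nabla^2 f\|_{L^\infty}\|g\|_{\dot{H}^{s-2}}$ bound rather than a worse quantity. The other parts are essentially bookkeeping on top of classical paradifferential estimates.
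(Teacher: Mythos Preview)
Your approach via Bony's paradifferential calculus is a genuinely different route from the paper. The paper does not reprove these inequalities from scratch: (i) is quoted as the Kato--Ponce inequality of Grafakos--Oh, (ii) and (iii) are cited from Blanc--Danchin--Ducomet--Ne\v casov\'a and Danchin--Ducomet, and (iv)--(v) are derived as short corollaries of D.~Li's refined fractional Leibniz rule (valid for negative orders and with asymmetric derivative placement). For (iv) the paper applies Li's bilinear estimate with order $-\sigma/2$, $s_1=1-\sigma$, $s_2=\sigma/2$, and then controls the residual term $\|g\,\nabla\Lambda^{-\sigma/2}f\|_{L^2}$ by H\"older and Hardy--Littlewood--Sobolev, placing $\nabla\Lambda^{-\sigma/2}f\in L^{2d/\sigma}$ and $g\in L^{2d/(d-\sigma)}$. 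For (v) it invokes Li's expansion $\Lambda^s(fg)-\sum_{|\alpha|\le s_1}\frac1{\alpha!}\partial^\alpha f\,\Lambda^{s,\alpha}g-\sum_{|\beta|<s_2}\frac1{\beta!}\partial^\beta g\,\Lambda^{s,\beta}f$ directly. This is considerably shorter than a full paradifferential derivation, at the cost of relying on a fairly deep black box.

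Your plan for (i)--(iii) and (v) is standard and correct. For (iv) two points in your sketch need repair. First, the phrase ``gains one derivative on $f$, giving $\|\Lambda^{1-\sigma}\nabla f\|_{L^\infty}\|\Lambda^{\sigma/2}g\|_{L^2}$'' is dimensionally inconsistent; what is actually needed on the high--low piece $[\nabla\Lambda^{-\sigma/2},T_f]g$ is the $\theta$-H\"older symbol bound $|m(\xi)-m(\xi-\eta)|\lesssim|\eta|^{1-\sigma}|\xi|^{\sigma/2}$ with $\theta=1-\sigma$ (not $\theta=1$), which then yields $\|f\|_{\dot C^{1-\sigma}}\|\Lambda^{\sigma/2}g\|_{L^2}$ directly, with $\|f\|_{\dot C^{1-\sigma}}\lesssim\|\Lambda^{1-\sigma}f\|_{L^\infty}$. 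Second, the embedding $\dot H^{d/2}\hookrightarrow L^\infty$ you invoke for the $T_gf$ and $R(f,g)$ pieces is false; your parenthetical hints at the Besov refinement $\dot B^{d/2}_{2,1}\hookrightarrow L^\infty$, but you would then have to check that the dyadic sum is in $\ell^1$ rather than $\ell^2$, which is not automatic. The paper's Hardy--Littlewood--Sobolev argument avoids this endpoint issue entirely and is the cleaner way to close (iv).
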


\begin{proof}
For the Kato--Ponce inequality (i) we refer to \cite{GO14}.
The inequalities (ii) and (iii) are found in \cite{BDDN21, DD22}. 
The commutator estimates of the form (iv) and (v) follow from the fractional Leibniz inequality obtained in \cite{Li19}.
In fact, it follows from \cite{Li19} that
\[
\|\Lambda^s \partial_{x_i} ( fg) - f\Lambda^s\partial_{x_i}g -g\Lambda^s\partial_{x_i}f \|_{L^2} 
\lesssim \| \Lambda^{s_1} f \|_{L^{p_1}} \|\Lambda^{s_2}g\|_{L^{p_2}}
\]
holds for $-1<s<0$ and $s_1, s_2 \in [0,s+1]$ and $p_1, p_2 \in [2,\infty]$ with $s+1 = s_1+s_2$ and $\frac12 = \frac1{p_1} + \frac1{p_2}$. Applying this with $s_1=1-\sigma$, $s_2=\frac\sigma2$, $p_1=\infty$, and $p_2=2$, then we have
\[
\begin{aligned}
	 \| \nabla \Lambda^{-\frac \sigma 2}\lt( f g\rt) - f\nabla \Lambda^{-\frac\sigma 2}g   \|_{L^2}
	&= \| \nabla \Lambda^{-\frac \sigma 2}\lt( f g\rt) - f\nabla \Lambda^{-\frac\sigma 2}g  - g\nabla \Lambda^{-\frac\sigma2}f  \|_{L^2}+\|  g\nabla \Lambda^{-\frac\sigma2}f  \|_{L^2}\\
	&\lesssim \|\Lambda^{1-\sigma} f\|_{L^\infty} \|\Lambda^{\frac\sigma2} g\|_{L^2}+ \|\nabla\Lambda^{-\frac\sigma2}f\|_{L^{\frac{2d}{\sigma}}} \|g\|_{L^{\frac{1}{\frac12-\frac{\sigma}{2d}}}}\\
	&\lesssim \|\Lambda^{1-\sigma} f\|_{L^\infty} \|\Lambda^{\frac\sigma2} g\|_{L^2}+ \|f\|_{\dot{H}^{\frac{d}{2}+1-\sigma}} \|\Lambda^{\frac{\sigma}{2}}g\|_{L^{2}}
\end{aligned}
\]
thanks to the Hardy--Littlewood--Sobolev inequality.
Thus we obtain \eqref{tech_5}.

To show (v), we recall that
the following inequality holds for $s>0$ and $p \in (1,\infty)$:
\[
\| \Lambda^s (fg) - \sum_{|\alpha| \le s_1} \frac{1}{\alpha!}\pa^\alpha f \Lambda^{s,\alpha} g - \sum_{|\beta| < s_2}\frac{1}{\beta!}\pa^\beta g \Lambda^{s,\beta} f\|_{L^p} \le C\|\Lambda^{s_1} f\|_{L^\infty}\|\Lambda^{s_2} g\|_{L^p}
\]
whenever $s_1, s_2\ge 0$ with $s_1 + s_2 = s$. Here, $\alpha, \beta$ denote the multi-index and $\widehat{\Lambda^{s,\alpha} h} := i^{-|\alpha|}\pa_\xi^\alpha (|\xi|^s) \hat{h}(\xi)$.
In the case $1\leq s<2$, we use the above with taking $s_1=1$ to get
\[
\begin{aligned}
\| [\Lambda^s, f ]g \|_{L^2} &\lesssim 
\| \Lambda^s ( fg) -\sum_{|\alpha| \le 1} \frac{1}{\alpha!}\pa^\alpha f \Lambda^{s,\alpha} g -g \Lambda^{s}f  \|_{L^2} +\sum_{|\alpha| = 1} \| \pa^{\alpha} f \Lambda^{s, \alpha} g \|_{L^2} + \| g\Lambda^s f\|_{L^2} \\
&\lesssim 
\| \Lambda^{1} f \|_{L^{\infty}}\| \Lambda^{s-1} g \|_{L^2}  +\|\nabla f\|_{L^{\infty}}\|\Lambda^{s-1} g\|_{L^2} +\| g\Lambda^s f\|_{L^2} \\
&\lesssim \| \Lambda^{1} f \|_{L^{\infty}}\| \Lambda^{s-1} g \|_{L^2}  +\|\nabla f\|_{L^{\infty}}\|\Lambda^{s-1} g\|_{L^2} +\| g\Lambda^s f\|_{L^2} 
\end{aligned}
\]
from the fact that $\|\Lambda^{s, \alpha} g\|_{L^2} \lesssim \|\Lambda^{s-\alpha} g\|_{L^2}$.
On the other hand, in the case $0<s<1$, it is not difficult to check that a similar process yields
\[\begin{aligned}
	\| [\Lambda^s, f ]g \|_{L^2} 
	\lesssim \| \Lambda^{s_1} f \|_{L^{\infty}}\| \Lambda^{s-s_1} g \|_{L^2}  +\| g\Lambda^s f\|_{L^2}  
\end{aligned}\]
for $s_1\leq s$.
\end{proof}

For the uniqueness of solutions, we need to estimate the term $\|\Lambda v\|_{L^{\infty}}$ due to the use of the inequality \eqref{moser_2term}. Here, we would like point out that $\|\Lambda v\|_{L^{\infty}}$ cannot be controlled by $\|\nabla v\|_{L^{\infty}}$. To handle this difficulty, we provide the following lemma which, in particular, gives $\Lambda v \in L^{\infty}$.
\begin{lemma}\label{lambda_est}
Let $s>0$. If $f\in \dot{H}^{\frac{d}{2}+s+\varepsilon} \cap\dot{H}^{\frac{d}{2}+s-\varepsilon} $, then we have
\[
\|\Lambda^s f\|_{L^{\infty}} \le  C_{\varepsilon} \| f\|_{\dot{H}^{\frac{d}{2}+s+\varepsilon}}^{\frac{1}{2}}\| f\|_{\dot{H}^{\frac{d}{2}+s-\varepsilon}}^{\frac{1}{2}}.
\]
\end{lemma}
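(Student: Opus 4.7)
\textbf{Proof plan for Lemma \ref{lambda_est}.} The idea is the standard Fourier--splitting interpolation. First I would start from the inequality
\[
\|\Lambda^s f\|_{L^\infty} \le (2\pi)^{-d}\int_{\R^d} |\xi|^s |\hat f(\xi)|\,d\xi,
\]
which reduces the claim to an $L^1$ bound on $|\xi|^s \hat f$. Then, for an arbitrary parameter $R>0$ to be optimized at the end, I split the integral into the regions $\{|\xi|\le R\}$ and $\{|\xi|>R\}$.

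On the low-frequency piece I apply Cauchy--Schwarz after inserting the weight $|\xi|^{d/2+s-\varepsilon}$:
\[
\int_{|\xi|\le R}|\xi|^s|\hat f(\xi)|\,d\xi \le \left(\int_{|\xi|\le R}|\xi|^{-d+2\varepsilon}\,d\xi\right)^{\!1/2}\!\left(\int_{|\xi|\le R}|\xi|^{d+2s-2\varepsilon}|\hat f(\xi)|^2 d\xi\right)^{\!1/2}\!\!\lesssim \varepsilon^{-1/2}R^{\varepsilon}\,\|f\|_{\dot H^{d/2+s-\varepsilon}}.
\]
Symmetrically, on the high-frequency piece I factor out $|\xi|^{d/2+s+\varepsilon}$ and obtain
\[
\int_{|\xi|>R}|\xi|^s|\hat f(\xi)|\,d\xi \le \left(\int_{|\xi|>R}|\xi|^{-d-2\varepsilon}\,d\xi\right)^{\!1/2}\!\left(\int_{|\xi|>R}|\xi|^{d+2s+2\varepsilon}|\hat f(\xi)|^2 d\xi\right)^{\!1/2}\!\!\lesssim \varepsilon^{-1/2}R^{-\varepsilon}\,\|f\|_{\dot H^{d/2+s+\varepsilon}}.
\]
Here the role of the parameter $\varepsilon>0$ is precisely to make both radial integrals (in $d$ dimensions) convergent, which is why the assumption $\|f\|_{\dot H^{d/2+s}}$ alone would be insufficient and the two endpoints are needed.

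Finally I optimize in $R$ by choosing
\[
R^{\varepsilon}=\left(\frac{\|f\|_{\dot H^{d/2+s+\varepsilon}}}{\|f\|_{\dot H^{d/2+s-\varepsilon}}}\right)^{\!1/2},
\]
which equates the two bounds and yields
\[
\|\Lambda^s f\|_{L^\infty}\le C_\varepsilon\, \|f\|_{\dot H^{d/2+s+\varepsilon}}^{1/2}\|f\|_{\dot H^{d/2+s-\varepsilon}}^{1/2},
\]
as claimed. The only subtlety is the minor one of checking the radial integrals converge, which is exactly what forces the symmetric $\pm\varepsilon$ gap around the critical exponent $d/2+s$; beyond that the estimate is routine and there is no real obstacle.
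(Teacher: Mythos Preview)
Your argument is correct and is essentially the same approach as the paper's: both start from the Hausdorff--Young bound $\|\Lambda^s f\|_{L^\infty}\lesssim \||\xi|^s\hat f\|_{L^1}$ and then control this $L^1$ norm by the geometric mean of the two $\dot H^{d/2+s\pm\varepsilon}$ norms. The only difference is that the paper invokes an external lemma (from \cite{JKL}) for that $L^1$ bound, whereas you spell out the standard high/low frequency splitting and Cauchy--Schwarz optimization explicitly; your write-up is in fact a self-contained proof of the step the paper cites.
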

\begin{proof}
From \cite[Lemma~2.1]{JKL}, it follows that
\[
\|\Lambda^s f\|_{L^{\infty}} \le  C\| |\xi|^s\hat{f} \|_{L^1} \le C \||\xi|^{\frac{d}{2}+s+\varepsilon} \hat{f}\|_{L^2}^{\frac{1}{2}} \| |\xi|^{\frac{d}{2}+s-\varepsilon} \hat{f}\|_{L^2}^{\frac{1}{2}} = C \| f\|_{\dot{H}^{\frac{d}{2}+s+\varepsilon}}^{\frac{1}{2}}\| f\|_{\dot{H}^{\frac{d}{2}+s-\varepsilon}}^{\frac{1}{2}}
\]
by Young's inequality.
\end{proof}
Due to the structure of our reformulation, we also need to employ the Sobolev inequality for the composition of functions presented in the lemma below. For the proof, we refer to  \cite{BDDN21}. 
\begin{lemma}\label{tech_4}
Let $\alpha\ge 1$ and $0\leq s <\alpha +1/2$. If $f \in L^\infty \cap \dot{H}^s$, then we have
	\[
	\| |f|^{\alpha}\|_{\dot{H}^{s}}\lesssim \|f\|_{L^{\infty}}^{\alpha-1} \|f\|_{\dot{H}^{s}}.
	\]
\end{lemma}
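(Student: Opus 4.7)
The plan is to treat the cases of integer and non-integer $\alpha$ separately, since the latter requires the machinery of paradifferential calculus while the former reduces to the Kato--Ponce inequality already recorded in Lemma~\ref{tech}.

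For integer $\alpha\in\N$, I would first observe that when $\alpha$ is even, $|f|^\alpha=f^\alpha$ is a polynomial in $f$, and the desired bound follows from $(\alpha-1)$ iterations of the Kato--Ponce inequality \eqref{KP_ineq} with $p_1=q_2=2$ and $p_2=q_1=\infty$, i.e.\
\[
\|f^\alpha\|_{\dot{H}^s}\ls \|f\|_{L^\infty}\,\|f^{\alpha-1}\|_{\dot{H}^s}+\|f\|_{\dot{H}^s}\,\|f^{\alpha-1}\|_{L^\infty},
\]
together with the trivial identity $\|f^k\|_{L^\infty}=\|f\|_{L^\infty}^k$. For odd $\alpha\ge 3$ I would write $|f|^\alpha=f^{\alpha-1}\cdot|f|$ and combine the even case with the elementary composition bound $\||f|\|_{\dot{H}^s}\ls\|f\|_{\dot{H}^s}$, which is valid exactly in the range $s<3/2=1+1/2$, the statement being the $\alpha=1$ instance of the lemma itself. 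This closes the integer case with no loss in the threshold $s<\alpha+1/2$.

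For non-integer $\alpha$, set $F(t):=|t|^\alpha$. This function is of class $C^{\lfloor\alpha\rfloor}$ with $\lfloor\alpha\rfloor$-th derivative of H\"older exponent $\alpha-\lfloor\alpha\rfloor$. The plan is to invoke Bony's paralinearization
\[
F(f)=T_{F'(f)}f+R(f),
\]
and to estimate the two contributions separately by Littlewood--Paley techniques. The paraproduct part satisfies the standard bound
\[
\|T_{F'(f)}f\|_{\dot{H}^s}\ls\|F'(f)\|_{L^\infty}\,\|f\|_{\dot{H}^s}\ls\|f\|_{L^\infty}^{\alpha-1}\,\|f\|_{\dot{H}^s},
\]
using $|F'(t)|=\alpha|t|^{\alpha-1}$. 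For the remainder $R(f)$ I would appeal to the sharp composition theorem for nonlinear maps acting on Sobolev spaces (Meyer's theorem; see also Runst--Sickel), which yields a bound of the same form precisely under the hypothesis $s<\alpha+1/2$.

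The main obstacle is the non-integer regime, and the threshold $s<\alpha+1/2$ is sharp: it is exactly dictated by the H\"older regularity of $F$. At the borderline $s=\alpha+1/2$ the paradifferential remainder ceases to gain regularity, and the singularity of $F^{(\lfloor\alpha\rfloor)}$ at the origin precludes a linear bound with coefficient $\|f\|_{L^\infty}^{\alpha-1}$. Since the estimate is a standard consequence of paradifferential calculus, the cleanest route, as adopted in \cite{BDDN21}, is to invoke the composition theorem directly rather than unfold the Littlewood--Paley bookkeeping in place.
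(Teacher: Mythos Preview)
The paper does not give an in-house proof of this lemma either; it simply records the statement and cites \cite{BDDN21}. Your proposal ends at the same place, so in that sense the approaches coincide.

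That said, your supplementary sketch for the integer case has a gap when $\alpha$ is odd and $\alpha\ge 3$. After writing $|f|^\alpha=f^{\alpha-1}\,|f|$ and applying Kato--Ponce, one of the two terms is $\|f^{\alpha-1}\|_{L^\infty}\,\||f|\|_{\dot H^s}$, and controlling $\||f|\|_{\dot H^s}$ by $\|f\|_{\dot H^s}$ is exactly the $\alpha=1$ instance of the lemma, which is only valid for $s<3/2$. Hence your decomposition closes only for $s<3/2$, not for the full range $s<\alpha+1/2$; the claim ``no loss in the threshold'' is therefore incorrect. The paradifferential/composition argument you outline for non-integer $\alpha$ applies equally well to odd integers (since $t\mapsto|t|^\alpha$ is then $C^{\alpha-1,1}$), so the cleanest fix is to drop the separate odd-integer treatment and invoke the composition theorem uniformly for all $\alpha\ge 1$, which is exactly what \cite{BDDN21}---and hence the paper---does.
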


Finally, we provide a Gr\"onwall-type lemma below.

\begin{lemma}\label{tech_6}
Let $Y \in AC(\R_+;\R_+)$ satisfies the following differential inequality
\bq\label{diff}
\frac{d}{dt}Y(t) + \frac{a}{1+t}Y(t) \le C^{\ast} \lt(Y^2(t) + \frac{Y(t)}{(1+t)^2} + c_P\sum_{i=1}^N \frac{Y^{b_i+1}(t)}{(1+t)^{1-c_i}}\rt) \quad \text{a.e  t}\in \mathbb{R}_{+} 
\eq
for some $C^{\ast}=C^*(N)>0$, any positive integer $N$ and $c_P=0,1$. 
If $a>1$, $b_i>0$, and $c_i<ab_i$ for $i=1, \dots, N$, 
then there exists $M=M(a, b_i, c_i, C^{\ast})$ such that if $Y(0)\le M$, then we have
\[
Y(t) \le \frac{2e^{\frac{C^{\ast}t}{1+t}}}{(1+t)^a}Y(0), \quad \forall t\ge 0.
\]
\end{lemma}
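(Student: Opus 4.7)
\smallskip

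\noindent\textbf{Proof plan for Lemma \ref{tech_6}.}
The plan is to remove the algebraic weight and the linear term by an integrating factor, and then close the remaining nonlinear terms by a continuity/bootstrap argument based on the smallness of $Y(0)$. Concretely, I would first introduce
\[
Z(t) := (1+t)^a Y(t), \qquad \Phi(t) := \exp\!\lt(-\frac{C^\ast t}{1+t}\rt), \qquad W(t) := \Phi(t)Z(t).
\]
Since $\Phi'(t) = -C^\ast (1+t)^{-2}\Phi(t)$, multiplying \eqref{diff} by $(1+t)^a \Phi(t)$ and using $0<\Phi\le 1$ gives, after rearrangement,
\[
\frac{d}{dt} W(t) \;\le\; C^{\ast} \lt( \frac{Z(t)^2}{(1+t)^{a}} + c_P\sum_{i=1}^N \frac{Z(t)^{b_i+1}}{(1+t)^{a b_i + 1 - c_i}} \rt),
\]
so the linear-in-$Z$ term $Z/(1+t)^2$ has been absorbed into the integrating factor.

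Next I would run a bootstrap on $W$. Fix the target ceiling $W(t) \le 2Y(0)$ and let
\[
T^\ast := \sup\{\,T\ge 0 : W(t) \le 2Y(0) \ \text{for all } t\in[0,T]\,\}.
\]
On $[0,T^\ast)$ one has $Z(t)\le 2e^{C^\ast}Y(0)$ because $\Phi\ge e^{-C^\ast}$, and plugging this into the inequality for $W'$ and integrating from $0$ to $t$ yields
\[
W(t) \;\le\; Y(0) + C^{\ast}\!\int_0^t \!\lt(\frac{(2e^{C^\ast}Y(0))^2}{(1+\tau)^{a}} + c_P\sum_{i=1}^N \frac{(2e^{C^\ast}Y(0))^{b_i+1}}{(1+\tau)^{ab_i+1-c_i}} \rt) d\tau.
\]
The assumptions $a>1$ and $c_i< ab_i$ are exactly what makes each exponent strictly larger than $1$, so each time integral converges to a finite constant depending only on $a,b_i,c_i,C^\ast,N$. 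Hence there is a constant $C^\sharp=C^\sharp(a,b_i,c_i,C^\ast,N)$ with
\[
W(t) \;\le\; Y(0) + C^\sharp\lt(Y(0)^2 + \sum_{i=1}^N Y(0)^{b_i+1}\rt).
\]

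Finally I would choose the threshold $M$ so small that the right-hand side is bounded by $\tfrac{3}{2}Y(0)$ whenever $Y(0)\le M$; this is possible because $b_i>0$, so every nonlinear term is of order $o(Y(0))$ as $Y(0)\to 0$. Then on $[0,T^\ast)$ we have the strict inequality $W(t)\le \tfrac{3}{2}Y(0) < 2Y(0)$, which together with the continuity of $W$ forces $T^\ast = \infty$. Translating back, $Z(t)\le 2 e^{C^\ast t/(1+t)} Y(0)$, which is precisely the stated decay for $Y$. The main obstacle is the bookkeeping for the sum of nonlinear terms: one must verify that the single smallness condition on $Y(0)$ simultaneously dominates all $N$ contributions, and that the worst of the exponents $ab_i+1-c_i$ still exceeds $1$ — this is exactly the role of the hypothesis $c_i < ab_i$, and is the only place where the gap $ab_i - c_i > 0$ is genuinely used.
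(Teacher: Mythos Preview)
Your proposal is correct and follows essentially the same approach as the paper: the paper introduces $Z(t) := (1+t)^a e^{-C^\ast t/(1+t)} Y(t)$, which is precisely your $W(t)$, derives the same differential inequality for it, and closes the argument by the identical bootstrap (assuming $Z\le 2Z_0$, integrating, and using $a>1$ and $c_i<ab_i$ to make the time integrals finite). The only cosmetic difference is that the paper works directly with the single quantity $Z$ rather than splitting it as $\Phi\cdot Z$, and it closes the bootstrap at the threshold $2Z_0$ rather than $\tfrac32 Y(0)$.
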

\begin{proof}
The case $N=1$ is already shown in \cite[Lemma~4.1]{BDDN21}.
Following a similar argument as in \cite{BDDN21}, we extend it to any positive integer $N$.
Let us set 
$$Z(t) :=(1+t)^a e^{-\frac{C^* t}{1+t}} Y(t)$$
and then \eqref{diff} implies
\[
\begin{aligned}
\frac{d}{dt} Z(t) 
&\leq
C^*(1+t)^a e^{-\frac{C^* t}{1+t}} Y^2(t) +C^* e^{-\frac{C^* t}{1+t}}c_P \sum_{i=1}^N (1+t)^{a+ c_i-1}Y^{b_i+1}(t)\\
&= C^*(1+t)^{-a} e^{\frac{C^*t}{1+t}} Z^2(t) + C^* c_P\sum_{i=1}^N (1+t)^{c_i-1-ab_i} e^{\frac{C^* b_i t}{1+t}} Z^{b_i+1}(t).
\end{aligned}
\]
To use the bootstrap argument, for any $T>0$ we assume that
\bq\label{diff3}
Z(t) \leq 2 Z_0 \quad \forall \, t\in [0, T],
\eq
where $Z_0 := Z(0)$. 
Then it yields that 
\[
\begin{aligned}
\frac{d}{dt} Z(t) 
	\leq
4C^*(1+t)^{-a} e^{C^*} Z_0^2 + Cc_P \sum_{i=1}^N (1+t)^{c_i-1-ab_i} e^{C^* b_i } (2Z_0)^{b_i+1}.
\end{aligned}
\]
Taking integration over $[0, T]$, we obtain
\bq\label{diff2}
\begin{aligned}
	 Z(t) &\leq	Z_0 +\frac{ 4C^* e^{C^*}}{a-1} Z_0^2(1-(1+t)^{1-a} ) +  c_P\sum_{i=1}^N \frac{2^{b_i +1}C^*e^{C^* b_i } }{ab_i -c_i} Z_0^{b_i+1} (1-(1+t)^{c_i-ab_i})\\
	&\leq \Big( 1+ \frac{ 4C^* e^{C^*}}{a-1} Z_0 + c_P \sum_{i=1}^N \frac{2^{b_i +1}C^*e^{C^* b_i } }{ab_i -c_i} Z_0^{b_i}\Big)Z_0 
\end{aligned}
\eq
for $a>1$ and $ab_i >c_i$.
When $Z_0=0$, it is trivial to get \eqref{diff3}. Otherwise, we take $Z_0 > 0$ sufficiently small so that 
\[
\frac{ 4C^* e^{C^*}}{a-1} Z_0 +  c_P\sum_{i=1}^N \frac{2^{b_i +1}C^*e^{C^* b_i } }{ab_i -c_i} Z_0^{b_i} <1.
\]
if $b_i>0$. Hence, \eqref{diff2} implies \eqref{diff3} for any $T>0$. This completes the proof.
\end{proof}

%
%
%
%

\section{Pressureless Euler--Riesz system}\label{sec:3}
\setcounter{equation}{0}
In this section, we show a priori estimates for the system \eqref{ER_main2} with repulsive case $\lambda=-1$.  
First, for simplicity, we set $w := u-v$, then the system \eqref{ER_main2} can be rewritten as follows:
\bq\label{npER}
\begin{aligned}
&\pa_t n + w \cdot \nabla n + v \cdot \nabla n + \frac12  n \nabla \cdot w + \frac12  n \nabla \cdot v = 0, \quad (x,t) \in \R^d \times \R_+,\\
&\pa_t w + w \cdot \nabla w + v \cdot \nabla w + w \cdot \nabla v = -\nabla \Lambda^{-\sigma} n^2.
\end{aligned}
\eq

\subsection{A priori estimates}
We first provide a priori estimates for smooth solutions to \eqref{npER}.

\subsubsection{$H^s \times H^{s+\frac\sigma2}$ bound estimates}
\begin{lemma}\label{low_est}
Let $(n,w)$ be a smooth solution to \eqref{npER} decaying sufficiently fast at infinity. Then we have
\[
\frac{d}{dt} \|n\|_{L^2} =0,
\]
\[ \frac{d}{dt} \|w\|_{L^2} + \frac{1-\frac{d}{2}}{1+t} \|w\|_{L^2} \lesssim  \|\nabla w\|_{L^{\infty}} \|w\|_{L^2} +\frac{\|w\|_{L^2}}{(1+t)^2} + \|\nabla \Lambda^{-\sigma}(n^2)\|_{L^2},
\]
and for $s>1$, 
\[
\begin{aligned}
	&\frac{d}{dt} X_{s, \sigma}  + \frac{s+ \min\{ 0, \frac\sigma2 +1-\frac d2\}}{1+t} X_{s, \sigma} \\
	&\quad \lesssim \|\nabla w\|_{L^\infty}X_{s, \sigma}  + \|\nabla n\|_{L^\infty}\|w\|_{\dot{H}^s}  + \frac{X_{s, \sigma} }{(1+t)^2} + \frac{\|n\|_{L^\infty}}{(1+t)^{2+s-\frac d2}} + \frac{\|w\|_{L^\infty}}{(1+t)^{2+s+\frac\sigma2-\frac d2}} \\
	&\qquad + \frac{(\|n\|_{\dot{H}^{s-1}}+\|w\|_{\dot{H}^{s-1+\frac\sigma2}})  }{(1+t)^3}  + \frac{\|\nabla n\|_{L^\infty}}{(1+t)^{s+1-\frac d2}}  + 
	\frac{\|\nabla w\|_{L^\infty}}{(1+t)^{s+1+\frac\sigma2-\frac d2}},
\end{aligned}
\]
where
$X_{s,\sigma}^2 := \|w\|_{\dot{H}^{s+\frac \sigma2}}^2 + 4\|n\|_{\dot{H}^s}^2$.
\end{lemma}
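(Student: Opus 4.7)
The three bounds are proved by energy estimates, and the key structural ingredient in (iii) is a cancellation between the top-order Riesz--coupling arising from the $w$-equation and the coupling $\frac{1}{2}n\nabla\cdot w$ arising from the $n$-equation.

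Parts (i) and (ii) are direct. For (i), testing the $n$-equation against $n$, the convective term contributes $-\frac{1}{2}\int\nabla\cdot(w+v)\,n^{2}\,dx$ after integration by parts, which cancels the stretching term $\frac{1}{2}\int n^{2}\nabla\cdot(w+v)\,dx$ exactly. For (ii), testing the $w$-equation against $w$ produces $\frac{1}{2}\int\nabla\cdot(w+v)|w|^{2}\,dx-\int w_{i}w_{j}\partial_{j}v_{i}\,dx$ from the transport and Lie-derivative pieces. Using the explicit forms $\nabla v=\mathbb{I}_{d}/(1+t)+K/(1+t)^{2}$ from \eqref{dv} and $\nabla\cdot v=d/(1+t)+\mathrm{Tr}\,K/(1+t)^{2}$ from \eqref{divv}, the $v$-contribution on the right-hand side reduces to $(d/2-1)/(1+t)\|w\|_{L^{2}}^{2}+O(\|w\|_{L^{2}}^{2}/(1+t)^{2})$; moving the dispersive part to the left gives the stated rate $(1-d/2)/(1+t)$, with the $(1+t)^{-2}$ corrections, the commutator contribution $\|\nabla w\|_{L^{\infty}}\|w\|_{L^{2}}^{2}$, and the Riesz forcing $\|\nabla\Lambda^{-\sigma}(n^{2})\|_{L^{2}}\|w\|_{L^{2}}$ kept as source terms.

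For (iii), I apply $\Lambda^{s+\sigma/2}$ to the $w$-equation and $\Lambda^{s}$ to the $n$-equation, test against $\Lambda^{s+\sigma/2}w$ and $\Lambda^{s}n$ respectively, and integrate. After integration by parts and Plancherel redistribution of fractional powers, the Riesz force contribution is
\[
-\int\Lambda^{s+\sigma/2}w\cdot\nabla\Lambda^{s-\sigma/2}(n^{2})\,dx=\int\Lambda^{s}(n^{2})\,\Lambda^{s}\nabla\cdot w\,dx,
\]
whose Leibniz expansion produces the top-order piece $2\int n\Lambda^{s}n\,\Lambda^{s}\nabla\cdot w\,dx$; from the $n$-equation, the coupling $-\frac{1}{2}\int\Lambda^{s}n\,\Lambda^{s}(n\nabla\cdot w)\,dx$ has top-order piece $-\frac{1}{2}\int n\Lambda^{s}n\,\Lambda^{s}\nabla\cdot w\,dx$. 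The factor $4$ in the definition $X_{s,\sigma}^{2}=\|w\|_{\dot{H}^{s+\sigma/2}}^{2}+4\|n\|_{\dot{H}^{s}}^{2}$ is precisely what makes $2-4\cdot\frac{1}{2}=0$, eliminating this top-order coupling. The dispersive rate is then extracted from the $v$-dependent pieces: applying the refined commutator estimate \eqref{tech_2} together with the explicit form of $\nabla v$ yields $[\Lambda^{s},v]\nabla n\approx\tfrac{s}{1+t}\Lambda^{s}n$ and $[\Lambda^{s+\sigma/2},v]\nabla w\approx\tfrac{s+\sigma/2}{1+t}\Lambda^{s+\sigma/2}w$, and combining with the transport divergence $\nabla\cdot v$, the Lie-derivative piece $w\cdot\nabla v$, and the subleading $\frac{1}{2}n\nabla\cdot v$ coupling gives net effective rates $s/(1+t)$ for $\|n\|_{\dot{H}^{s}}^{2}$ and $(s+\sigma/2+1-d/2)/(1+t)$ for $\|w\|_{\dot{H}^{s+\sigma/2}}^{2}$, whose minimum equals $s+\min\{0,\,\sigma/2+1-d/2\}$.

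All remaining terms are then absorbed into the right-hand side of the stated bound. The $\|\nabla w\|_{L^{\infty}}X_{s,\sigma}$ and $\|\nabla n\|_{L^{\infty}}\|w\|_{\dot{H}^{s}}$ contributions come from the commutators $[\Lambda^{s+\sigma/2},w]\nabla w$, $[\Lambda^{s},w]\nabla n$, and $[\Lambda^{s},n]\nabla\cdot w$ bounded via \eqref{tech_1}. The weighted terms $\|n\|_{L^{\infty}}/(1+t)^{2+s-d/2}$ and $\|w\|_{L^{\infty}}/(1+t)^{2+s+\sigma/2-d/2}$ come from the $K/(1+t)^{2}$ correction inside \eqref{tech_2}, bounded using $\|K\|_{\dot{H}^{\ell}}\lesssim(1+t)^{d/2-\ell}$ from \eqref{K_est}, while the $(1+t)^{-3}$-weighted terms use $\|\nabla^{2}v\|_{L^{\infty}}\lesssim(1+t)^{-3}$ from Proposition \ref{prop_bur}(iii). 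I expect the main technical obstacle to be carefully tracking the Leibniz remainder $\Lambda^{s}(n^{2})-2n\Lambda^{s}n$ and the other subleading commutator pieces: these share the same derivative count as the cancelled top-order terms, so a naive application of \eqref{KP_ineq} risks producing a factor like $\|w\|_{\dot{H}^{s+1}}$ that is stronger than what $X_{s,\sigma}$ controls, and one must redistribute fractional powers via Plancherel to rebalance every estimate onto the available norms $\|w\|_{\dot{H}^{s+\sigma/2}}$ and $\|n\|_{\dot{H}^{s}}$ before applying the Kato--Ponce inequality.
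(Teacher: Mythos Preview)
Your proposal is correct and follows essentially the same approach as the paper: the same cancellation mechanism via the factor $4$, the same extraction of the dispersive rates from the explicit form of $\nabla v$ through the refined commutator \eqref{tech_2}, and the same handling of the lower-order remainders. Your concern in the last paragraph is overstated, however: the paper avoids any appearance of $\|w\|_{\dot H^{s+1}}$ simply by writing $\nabla\Lambda^s(n^2)=2\Lambda^s(n\nabla n)=2n\nabla\Lambda^s n-2[n,\Lambda^s]\nabla n$ \emph{before} integrating by parts, so that after one integration by parts the top-order piece is exactly $-2\int n\Lambda^s n\,\nabla\!\cdot\!\Lambda^s w\,dx$ and the remainder $[n,\Lambda^s]\nabla n$ is controlled directly by \eqref{tech_1} as $\|\nabla n\|_{L^\infty}\|n\|_{\dot H^s}$; no delicate fractional redistribution is needed.
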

\begin{proof}
	First, we deduce from  $\eqref{npER}_1$ that 
	\[
	\frac{d}{dt} \intr |n|^2 \, dx =0.
	\]
	To estimate $\|w\|_{L^2}$, we see that
	\[\begin{aligned}
		\frac12\frac{d}{dt} \intr |w|^2 \,dx &= -\intr w (w\cdot \nabla w)\,dx -\intr w(v \cdot \nabla w)\,dx -\intr w(w \cdot \nabla v)\,dx  -\intr  w \nabla \Lambda^{-\sigma}(n^2)\,dx\\
		&= \frac12\intr (\nabla \cdot w) |w|^2\,dx + \frac12 \intr (\nabla \cdot v) |w|^2\,dx\\
		&\quad -\frac{1}{1+t}\intr |w|^2\,dx -\frac{1}{(1+t)^2} \intr w(K\cdot w)\,dx - \intr w \nabla \Lambda^{-\sigma}(n^2)\,dx\\
		&\le \|\nabla w\|_{L^\infty}\|w\|_{L^2}^2  + \frac{\frac d2-1}{1+t} \|w\|_{L^2}^2 + \frac{C}{(1+t)^2}\|w\|_{L^2}^2 + \|w\|_{L^2}\|\nabla \Lambda^{-\sigma}(n^2)\|_{L^2}
	\end{aligned}\]
	which gives the desired result.\\

Next, we show the highest-order estimates for $(n,w)$.
By applying $\Lambda^s $ to \eqref{npER}$_1$, we estimate
\[ 
\begin{aligned}
	&\frac{1}{2} \frac{d}{dt} \intr |\Lambda^s n|^2\,dx +\frac{1}{2} \intr \Lambda^{s}n (n \nabla \cdot \Lambda^{s}w)\,dx \\
	&= -\intr \Lambda^{s} n\,  (w \cdot \nabla \Lambda^{s} n)\,dx -\intr \Lambda^{s} n (v \cdot \nabla \Lambda^{s} n) \,dx
	+s \sum_{1\leq k\leq d}\intr \Lambda^{s} n \, \pa_k v \cdot  \nabla \pa_k \Lambda^{s-2} n \,dx\\
	&\quad -\frac12 \intr \Lambda^{s} n \, \Lambda^{s}(n \nabla \cdot v )\,dx 
	+\intr \Lambda^{s} n \, [w, \Lambda^{s}] \cdot \nabla n\,dx
	+\frac{1}{2}\intr \Lambda^{s} n [n, \Lambda^s]\nabla \cdot w\,dx\\
	&\quad + \intr \Lambda^{s} n \,   \Big( [v, \Lambda^s]\cdot\nabla n -s \sum_{1\leq k\leq d}\pa_k v \cdot  \nabla \pa_k \Lambda^{s-2} n  \Big)\,dx\\
	 &=:\sum_{i=1}^7 I_i.
\end{aligned}
\]
First, direct computations using \eqref{divv} give
\[
I_1 \le C\|\nabla w\|_{L^\infty}\|n\|_{\dot{H}^s}^2 \quad \mbox{and} \quad 
I_2 \le \frac{\frac d2}{1+t}\|n\|_{\dot{H}^s}^2 + \frac{C}{(1+t)^2}\|n\|_{\dot{H}^s}^2.
\]
For $I_3$,  \eqref{dv} implies  
\[
\begin{aligned}
	\sum_{1\leq k \leq d} \pa_k v \cdot \Lambda^{-2} \nabla \pa_{k} z 
	&= -\frac{1}{1+t}  \Lambda^{-2} \Big(-\sum_{1\leq k\leq d} \pa_{kk}  z\Big)  
	+\sum_{1\leq k\leq d}\frac{K_k}{(1+t)^2}\cdot \Lambda^{-2}\nabla \pa_{k}z	\\
	&= -\frac{z}{1+t}+\sum_{1\leq k\leq d}\frac{K_k}{(1+t)^2}\cdot \Lambda^{-2}\nabla \pa_{k}z,	
\end{aligned}
\]
where $K_k$ denotes the $k$th row vector of $K$ ($k=1, \dots,  d$).
Then  we combine this with $z=\Lambda^{s}n$ to get   
\[ 
\begin{aligned}
I_3&= 
s \sum_{1\leq k\leq d}\intr \Lambda^{s} n \, \pa_k v \cdot  \nabla \pa_k \Lambda^{s-2} n\,dx \\
&=	-\frac{s}{1+t} \intr |\Lambda^{s}n|^2  \,dx
	+\frac{s}{(1+t)^2} \sum_{1\leq k\leq d} \intr \Lambda^{s} n\, K_k\cdot \Lambda^{s-2} \nabla \pa_{k} n\,dx\\
	&\le -\frac{s}{1+t}\|n\|_{\dot{H}^s}^2 + \frac{C}{(1+t)^2}\|n\|_{\dot{H}^s}^2,
\end{aligned}
\]
where $C>0$ only depends on $\|K\|_{L^{\infty}}$.\\

\noindent For $I_4$, we use \eqref{divv} to get
\[\begin{aligned}
I_4
&\leq -\frac{\frac{d}{2}}{1+t}\|n\|_{\dot{H}^s} + \frac{C}{(1+t)^2} \| n\|_{\dot{H}^{s}} (\|n\|_{L^{\infty}} \| K\|_{\dot{H}^{s}}
+\| K\|_{L^{\infty}}\|n\|_{\dot{H}^{s}})
\end{aligned}
\]
due to \eqref{KP_ineq}.
For the rest, we use \eqref{tech_1} and \eqref{tech_2} to obtain
\[
\begin{aligned}
	&I_5  \leq C \| n\|_{\dot{H}^{s}} (\|\nabla n\|_{L^{\infty}} \| w\|_{\dot{H}^{s}} + \|\nabla w\|_{L^{\infty}}\|n\|_{\dot{H}^s}),\\
	&I_6
	\leq C \| n\|_{\dot{H}^{s}} (\|\nabla w\|_{L^{\infty}} \| n\|_{\dot{H}^{s}} + \|\nabla n\|_{L^{\infty}}\|w\|_{\dot{H}^{s}}), \quad \mbox{and}\\
	&I_7 
	\leq C \|n\|_{\dot{H}^{s}} (\|v\|_{\dot{H}^{s}}\|\nabla n\|_{\infty} + \|\nabla ^2 v\|_{L^{\infty}}\|n\|_{\dot{H}^{s-1}}).
	\end{aligned}
\]
Here, Proposition \ref{prop_bur} deduces that
$$
 \|\nabla^2 v\|_{L^{\infty}} \lesssim (1+t)^{-3} \quad \mbox{and} \quad
\|v\|_{\dot{H}^{\ell}} \lesssim (1+t)^{-2} \|K\|_{\dot{H}^{\ell-1}} \lesssim (1+t)^{\frac{d}{2}-\ell-1}
$$
for $\ell>1$. Thus, combining these all together with \eqref{K_est} yields 
\bq\label{high_n_est}
\begin{aligned}
\frac12&\frac{d}{dt}\|n\|_{\dot{H}^s}^2 + \frac{s}{1+t}\|n\|_{\dot{H}^s}^2 +\frac12 \intr n \Lambda^s n \nabla \cdot \Lambda^s w\,dx\\
&\lesssim \|\nabla w\|_{L^\infty}\|n\|_{\dot{H}^s}^2 + \|\nabla n\|_{L^\infty}\|w\|_{\dot{H}^s}\|n\|_{\dot{H}^s}  + \frac{\|n\|_{\dot{H}^s}^2}{(1+t)^2} \\
&\quad + \lt( \frac{\|n\|_{L^\infty}}{(1+t)^{2+s-\frac d2}} + \frac{\|n\|_{\dot{H}^{s-1}} }{(1+t)^3} + \frac{\|\nabla n\|_{L^\infty}}{(1+t)^{s+1-\frac d2}}\rt)\|n\|_{\dot{H}^s} .
\end{aligned}
\eq
Next, we apply $\Lambda^{s+\frac\sigma2} $ to \eqref{npER} and use the fact that $\Lambda^{\frac{\sigma}{2}}$ is a self-adjoint operator to obtain

\[ 
\begin{aligned}
	&\frac{1}{2} \frac{d}{dt} \intr |\Lambda^{s+\frac{\sigma}{2}} w|^2\,dx + \intr \Lambda^{s}w \cdot \nabla \Lambda^{s} (n^2)\,dx \\
	&\quad = -\intr \Lambda^{s+\frac{\sigma}{2}} w\,  (w \cdot \nabla \Lambda^{s+\frac{\sigma}{2}} w)\,dx -\intr \Lambda^{s+\frac{\sigma}{2}} w (v \cdot \nabla \Lambda^{s+\frac{\sigma}{2}} w) \,dx\\
	&\qquad
	+(s+\frac{\sigma}{2}) \sum_{1\leq k\leq d}\intr \Lambda^{s+\frac{\sigma}{2}} w \, \pa_k v \cdot  \nabla \pa_k \Lambda^{s+\frac{\sigma}{2}-2} w\,dx \\
	&\qquad - \intr \Lambda^{s+\frac{\sigma}{2}} w \, \Lambda^{s+\frac{\sigma}{2}}(w \cdot \nabla v ) \,dx
	+\intr \Lambda^{s+\frac{\sigma}{2}} w \, [w, \Lambda^{s+\frac{\sigma}{2}}] \cdot \nabla w\,dx\\
	&\qquad + \intr \Lambda^{s+\frac{\sigma}{2}} w \,   \Big( [v, \Lambda^{s+\frac{\sigma}{2}}]\cdot\nabla w -(s+\frac{\sigma}{2}) \sum_{1\leq k\leq d}\intr \pa_k v \cdot  \nabla \pa_k \Lambda^{s+\frac{\sigma}{2}-2} w \Big)\,dx.
\end{aligned}
\]
We use a similar argument to the estimates for $n$ in the above and get
\[
\begin{aligned}
\frac12&\frac{d}{dt}\|w\|_{\dot{H}^{s+\frac\sigma2}}^2 + \frac{s+\frac\sigma2 +1-\frac d2}{1+t}\|w\|_{\dot{H}^{s+\frac\sigma2}}^2 + \intr \Lambda^s w \cdot \nabla \Lambda^s (n^2)\,dx\\
&\lesssim \|\nabla w\|_{L^\infty}\|w\|_{\dot{H}^{s+\frac\sigma2}}^2  +\frac{\|w\|_{\dot{H}^{s+\frac\sigma2}}^2}{(1+t)^2}\\
&\quad +\lt( \frac{\|w\|_{L^\infty}}{(1+t)^{2+s+\frac\sigma2-\frac d2}} 
+  \frac{\|w\|_{\dot{H}^{s-1+\frac\sigma2}}}{(1+t)^3} +\frac{\|\nabla w\|_{L^\infty}}{(1+t)^{s+1+\frac\sigma2 -\frac d2}} \rt)\|w\|_{\dot{H}^{s+\frac{\sigma}{2}}}.
\end{aligned}
\]
To eliminate the last term on the left-hand side of the above, we use the integration by part to observe that
\[\begin{aligned}
 \intr \Lambda^s w \cdot \nabla \Lambda^s (n^2)\,dx& = 2\intr \Lambda^s w \cdot \Lambda^s (n\nabla n)\,dx\\
&= 2\intr n\Lambda^s w \cdot \nabla \Lambda^s n\,dx -2\intr \Lambda^s w \cdot [n, \Lambda^s]\nabla n\,dx\\
&= -2\intr \Lambda^s n \Lambda^s w \cdot \nabla n\,dx - 2\intr n \Lambda^s n \nabla \cdot \Lambda^s w\,dx -2\intr \Lambda^s w \cdot [n, \Lambda^s]\nabla n\,dx,
\end{aligned}\]
and thus we obtain
\[
\lt| \intr \Lambda^s w \cdot \nabla \Lambda^s (n^2)\,dx +2\intr n \Lambda^s n \nabla \cdot \Lambda^s w\,dx\rt|  \le C\|\nabla n\|_{L^\infty}\|w\|_{\dot{H}^s}\|n\|_{\dot{H}^s},
\]
where we used \eqref{tech_1}. Thus it follows that
\bq\label{high_w_est}
\begin{aligned}
\frac12&\frac{d}{dt}\|w\|_{\dot{H}^{s+\frac\sigma2}}^2 + \frac{s+\frac\sigma2 +1-\frac d2}{1+t}\|w\|_{\dot{H}^{s+\frac\sigma2}}^2 -2\intr n \Lambda^s n \nabla \cdot \Lambda^s w\,dx\\
&\lesssim \|\nabla w\|_{L^\infty}\|w\|_{\dot{H}^{s+\frac\sigma2}}^2 +\|\nabla n\|_{L^\infty}\|w\|_{\dot{H}^s}\|n\|_{\dot{H}^s} + \frac{\|w\|_{\dot{H}^{s+\frac\sigma2}}^2}{(1+t)^2}\\
&\quad +\lt( \frac{\|w\|_{L^\infty}}{(1+t)^{2+s+\frac\sigma2-\frac d2}} + \frac{\|w\|_{\dot{H}^{s-1+\frac\sigma2}}}{(1+t)^3} +
\frac{\|\nabla w\|_{L^\infty}}{(1+t)^{s+1+ \frac\sigma2- \frac d2}}\rt)\|w\|_{\dot{H}^{s+\frac\sigma2}}.
\end{aligned}
\eq 
Finally, we set

\[
X_{s,\sigma}^2 := \|w\|_{\dot{H}^{s+\frac \sigma2}}^2 + 4\|n\|_{\dot{H}^s}^2,
\]
and from \eqref{high_n_est} and \eqref{high_w_est}, we have
\[
\begin{aligned}
	&\frac12\frac{d}{dt}X_{s,\sigma}^2  + \frac{s+\min\{0, \frac{\sigma}{2}+1-\frac{d}{2}\}}{1+t}X_{s,\sigma}^2  \\
	&\quad \lesssim \|\nabla w\|_{L^\infty}X_{s,\sigma}^2  + \|\nabla n\|_{L^\infty}\|w\|_{\dot{H}^s}X_{s,\sigma}+ \frac{X_{s,\sigma}^2 }{(1+t)^2}\\
	&\qquad  + \lt( \frac{\|n\|_{L^\infty}}{(1+t)^{2+s-\frac d2}} + \frac{\|n\|_{\dot{H}^{s-1}} }{(1+t)^3} + \frac{\|\nabla n\|_{L^\infty}}{(1+t)^{s+1-\frac d2}}\rt)X_{s,\sigma}\\
	& \qquad  + \lt( \frac{\|w\|_{L^\infty}}{(1+t)^{2+s-\frac d2+\frac\sigma2}}+ \frac{\|w\|_{\dot{H}^{s-1+\frac\sigma2}}}{(1+t)^3} +
	\frac{\|\nabla w\|_{L^\infty}}{(1+t)^{s+1- \frac d2+ \frac\sigma2}}\rt)X_{s,\sigma},
\end{aligned}
\]
and this gives the desired estimate.
\end{proof}

\subsubsection{Temporal decay estimates}
We next investigate a priori temporal decay estimates of solutions. For this, inspired by \cite{BDDN21, DD22}, 
we introduce
\[
	n_{\ell_1, p_1}(t) := (1+t)^{\ell_1 +\frac\sigma2-\frac{d}{p_1}-1}\|n(t)\|_{\dot{W}^{\ell_1,p_1}} \quad \mbox{and} \quad w_{\ell_2, p_2}(t):= (1+t)^{\ell_2 - \frac{d}{p_2}-1}\|w(t)\|_{\dot{W}^{\ell_2, p_2}}
\]
for $\ell_1 \in [0, s]$, $\ell_2 \in [0,s+\frac\sigma2]$, and $p_1, p_2 \in [2,\infty]$. For simplicity, we write $n_{p_1} :=n_{0, p_1}$ and $w_{p_2}:=w_{0, p_2}$.
We also define 
\[
Y_{s,\sigma}(t) :=  \lt(w_{s+\frac \sigma2, 2}^2(t)  +4n_{s, 2}^2(t)\rt)^{\frac12} \quad \mbox{and} \quad Z(t) :=  n_2(t) +w_2(t) +  Y_{s,\sigma}(t).
\]

Then the following lemma is obtained by using the Gagliardo--Nirenberg interpolation inequality.
\begin{lemma}\label{bdds}
If $s> \frac d2+1$, we have the following relations: 
\[
\begin{aligned}
&n_{\infty}(t) \lesssim n_{s, 2}^{\frac{\frac{d}{2}}{s}}(t)\, n_{2}^{1-\frac{\frac{d}{2}}{s}}(t), \hspace{65pt}
w_{1, \infty}(t) \lesssim  w_{s+\frac{\sigma}{2}, 2}^{\frac{\frac d2 +1}{s+\frac\sigma2}}(t)\,w_2^{1-\frac{\frac d2+1}{s+\frac\sigma2}}(t),\\
&
n_{1, \infty}(t) \lesssim  n_{s, 2}^{\frac{\frac{d}{2}+1}{s}}(t)\, n_2^{1-\frac{\frac{d}{2}+1}{s}}(t), 
\hspace{40pt}
w_{s, 2}(t) \lesssim w_{s+\frac{\sigma}{2}, 2}^{\frac{s}{s+\frac\sigma2}}(t)\, w_2^{1-\frac{s}{s+\frac\sigma2}}(t), \\
&n_{s-1, 2}(t) \lesssim n_{s, 2}^{\frac{s-1}{s}}(t)\, n_{2}^{\frac{1}{s}}(t), \hspace{60pt}
w_{s-1+\frac{\sigma}{2}, 2}(t)\lesssim  w_{s+\frac\sigma2, 2}^{\frac{s-1}{s+\frac\sigma2}}(t)\,w_2^{1-\frac{s-1}{s+\frac\sigma2}}(t),\\
&n_{\frac{d}{\sigma-1}} (t) \lesssim n_{s,2}^{\frac{\frac d2 - (\sigma-1)}{s}}(t)\, n_2^{\frac{s-\frac d2 + \sigma-1}{s}}(t) \quad \mbox{when} \quad \sigma>1.
\end{aligned}
\]
In particular, each term on the left-hand side of the inequalities above is bounded by $C Z(t)$ with a constant $C>0$ independent of $t$. 
\end{lemma}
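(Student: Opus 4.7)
The plan is to recognize each displayed inequality as an instance of the Gagliardo--Nirenberg interpolation inequality
$$
\|f\|_{\dot{W}^{\ell,p}} \lesssim \|f\|_{\dot{H}^\tau}^{\theta}\,\|f\|_{L^2}^{1-\theta}, \qquad \theta = \frac{\ell - \frac{d}{p} + \frac{d}{2}}{\tau},
$$
applied with $f = n$ (and $\tau = s$) or $f = w$ (and $\tau = s + \tfrac{\sigma}{2}$), and then to verify that the $(1+t)$-weights packaged into the $n_{\ell,p}$ and $w_{\ell,p}$ match correctly. The hypothesis $s > \tfrac{d}{2}+1$ is precisely what makes every $\theta$ appearing on the right-hand side lie in $(0,1)$: for $\|n\|_{\dot{W}^{1,\infty}}$ one has $\theta = \tfrac{d/2+1}{s}$; for $\|w\|_{\dot{W}^{1,\infty}}$, $\theta = \tfrac{d/2+1}{s+\sigma/2}$; the exponents $\tfrac{d/2}{s}$, $\tfrac{s-1}{s}$, $\tfrac{s}{s+\sigma/2}$ and $\tfrac{s-1}{s+\sigma/2}$ are automatically admissible. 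The last bound, on $\|n\|_{L^{d/(\sigma-1)}}$, is the same GN with $\theta = \tfrac{d/2 - (\sigma-1)}{s}$, legitimate under the implicit restriction $\sigma \in (1, \min\{d,2\})$.

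The key bookkeeping step is checking that the time weights collapse. I will illustrate with the first inequality; the remaining six are identical in structure. With $\theta = \tfrac{d/2}{s}$, the weighted right-hand side carries a power
$$
\theta\Bigl(s + \tfrac{\sigma}{2} - \tfrac{d}{2} - 1\Bigr) + (1-\theta)\Bigl(\tfrac{\sigma}{2} - \tfrac{d}{2} - 1\Bigr) = \theta\, s + \tfrac{\sigma}{2} - \tfrac{d}{2} - 1 = \tfrac{\sigma}{2} - 1,
$$
which is exactly the exponent in the definition $n_\infty(t) = (1+t)^{\sigma/2 - 1}\|n(t)\|_{L^\infty}$. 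The same convex-combination identity $\theta\cdot(\text{weight on }\|\cdot\|_{\dot{H}^\tau}) + (1-\theta)\cdot(\text{weight on }\|\cdot\|_{L^2}) = \text{weight on }\|\cdot\|_{\dot{W}^{\ell,p}}$ holds verbatim for every other line, because both sides of each claimed inequality have scaling dimension zero once the $(1+t)$-factors are distributed.

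For the final assertion that each left-hand side is bounded by $C\,Z(t)$, I note that $n_2(t), w_2(t) \leq Z(t)$ by construction, while $n_{s,2}(t) \leq \tfrac{1}{2}Y_{s,\sigma}(t) \leq \tfrac{1}{2}Z(t)$ and $w_{s+\sigma/2,2}(t) \leq Y_{s,\sigma}(t) \leq Z(t)$ follow directly from the definition $Y_{s,\sigma}^2 = w_{s+\sigma/2,2}^2 + 4\,n_{s,2}^2$. Each right-hand side is thus of the form $A^\theta B^{1-\theta}$ with $A, B \leq C\,Z(t)$ and $\theta \in [0,1]$, so Young's inequality (equivalently, the monotonicity of $x \mapsto x^\theta$) yields $A^\theta B^{1-\theta} \leq C\,Z(t)$.

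There is no genuine analytical obstacle here: the entire argument is a scaling check. The only mild subtlety is the verification that the endpoint Gagliardo--Nirenberg inequality $\|n\|_{L^{d/(\sigma-1)}} \lesssim \|n\|_{\dot{H}^s}^\theta \|n\|_{L^2}^{1-\theta}$ is applicable, which amounts to confirming $d/(\sigma-1) \geq 2$ and $\theta \in [0,1]$; both hold in the regime $\sigma \in (1, \min\{d,2\})$ under the standing assumption $s > \tfrac{d}{2}+1$. All other cases reduce to the standard interior-case Gagliardo--Nirenberg inequality, so no further work is required.
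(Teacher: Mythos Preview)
Your proposal is correct and follows exactly the approach the paper indicates: the paper simply states that the lemma ``is obtained by using the Gagliardo--Nirenberg interpolation inequality'' without further detail, and you have supplied precisely that verification together with the weight-matching bookkeeping. There is nothing to add.
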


Then our main result of this subsection can be stated as follows.
\begin{proposition}\label{decay_temp1}
	Let the assumptions of Theorem \ref{main_thm} be satisfied.
	For $T>0$, suppose that $(n,w)$ is a smooth solution to \eqref{npER} on the time interval $[0,T]$ decaying sufficiently fast at infinity. Then we have
	\[
	\frac{d}{dt} Z(t)+\frac{C_{d, \sigma}}{1+t} Z(t)\lesssim  Z(t)^2 + \frac{Z(t)}{(1+t)^2},
	\]
where $C_{d, \sigma}=1+ \min\{ \frac{d-\sigma}{2},\, 1\} > 0$.
\end{proposition}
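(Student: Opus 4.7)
The plan is to derive three separate weighted differential inequalities---one for each of $n_2$, $w_2$, and $Y_{s,\sigma}$---and combine them. The key observation is that the explicit time weights built into $n_{\ell,p}$ and $w_{\ell,p}$ conspire with the dissipative coefficients $s/(1+t)$, $(1-d/2)/(1+t)$, etc.\ appearing in Lemma \ref{low_est} to yield the common damping rate $C_{d,\sigma}/(1+t)$. Throughout, Lemma \ref{bdds} is the workhorse that turns products of $L^p$ or Sobolev norms of $n$ and $w$ into bounds by $Z(t)$.

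For $n_2$, I would use the conservation of mass from Lemma \ref{low_est}. Differentiating $n_2(t)=(1+t)^{\sigma/2-d/2-1}\|n_0\|_{L^2}$ directly gives
\[
\frac{d}{dt} n_2 + \frac{d/2+1-\sigma/2}{1+t}\, n_2 = 0,
\]
and since $d/2+1-\sigma/2 = 1+(d-\sigma)/2 \ge C_{d,\sigma}$, this is even stronger than needed. For $Y_{s,\sigma}$, I would use the identity $Y_{s,\sigma}(t)=(1+t)^{s+\sigma/2-d/2-1}X_{s,\sigma}(t)$, differentiate, and substitute the third inequality of Lemma \ref{low_est}. The combined damping coefficient works out to
\[
-(s+\sigma/2-d/2-1) + s + \min\{0,\sigma/2+1-d/2\} = \min\{1+(d-\sigma)/2,\, 2\} = C_{d,\sigma},
\]
matching the claim exactly. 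Each source term on the right (for example $\|\nabla w\|_{L^\infty} X_{s,\sigma}$, $\|\nabla n\|_{L^\infty}\|w\|_{\dot{H}^s}$, $X_{s,\sigma}/(1+t)^2$, and the slowly decaying remainders involving $\|n\|_{L^\infty}$, $\|w\|_{L^\infty}$, $\|\nabla n\|_{L^\infty}$, $\|\nabla w\|_{L^\infty}$, $\|n\|_{\dot{H}^{s-1}}$, $\|w\|_{\dot{H}^{s-1+\sigma/2}}$), when multiplied by the weight $(1+t)^{s+\sigma/2-d/2-1}$, reduces to a product of two $n_{\ell,p}$- or $w_{\ell,p}$-quantities---each $\lesssim Z$ by Lemma \ref{bdds}---times a factor of either $1$ or $(1+t)^{-2}$. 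A typical check is $(1+t)^{s+\sigma/2-d/2-1}\|\nabla n\|_{L^\infty}\|w\|_{\dot{H}^s}= n_{1,\infty}\,w_{s,2}\lesssim Z^2$, and similarly $(1+t)^{s+\sigma/2-d/2-1}\cdot\|n\|_{L^\infty}/(1+t)^{2+s-d/2}= n_\infty/(1+t)^2 \lesssim Z/(1+t)^2$.

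For $w_2$, I would differentiate $(1+t)^{-d/2-1}\|w\|_{L^2}$ and use the second inequality of Lemma \ref{low_est}. The weight derivative contributes $-(d/2+1)/(1+t)$ and the explicit damping contributes $-(1-d/2)/(1+t)$, producing a combined rate of $-2/(1+t)$, which still dominates $-C_{d,\sigma}/(1+t)$ since $C_{d,\sigma}\le 2$. The terms $\|\nabla w\|_{L^\infty} w_2$ and $w_2/(1+t)^2$ are immediate. The one genuinely delicate piece is the Riesz source term
\[
(1+t)^{-d/2-1}\|\nabla\Lambda^{-\sigma}(n^2)\|_{L^2},
\]
which I would handle by splitting into cases: for $1\le\sigma<2$, Hardy--Littlewood--Sobolev (or, at $\sigma=1$, boundedness of the Riesz transform) gives $\|\nabla\Lambda^{-\sigma}(n^2)\|_{L^2}\lesssim \|n\|_{L^{2r}}^2$ with $1/r=1/2+(\sigma-1)/d$, which is admissible because $\sigma<\min\{d,2\}$; for $0<\sigma<1$, where $\Lambda^{1-\sigma}$ has positive order, I would apply the Kato--Ponce inequality \eqref{KP_ineq} to distribute the derivative and then Sobolev embedding. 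In every case, a Gagliardo--Nirenberg interpolation between $\|n\|_{L^2}$ and $\|n\|_{\dot{H}^s}$ produces precisely the power of $(1+t)$ that cancels the prefactor $(1+t)^{-d/2-1}$, leaving a bound of the form $CZ^2$.

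Adding the three inequalities yields the desired Riccati-type bound for $Z=n_2+w_2+Y_{s,\sigma}$. The principal technical obstacle is the Riesz source term in the $w_2$ line: its behaviour changes character at $\sigma=1$, and one must verify that the Hardy--Littlewood--Sobolev/Kato--Ponce exponents, together with the Gagliardo--Nirenberg scaling, simultaneously (i) are admissible for all $\sigma\in(0,\min\{d,2\})$, and (ii) produce exactly the scaling needed to cancel the $(1+t)^{-d/2-1}$ prefactor. Once this bookkeeping is in place, the rest of the argument is a direct sum of Lemmas \ref{low_est} and \ref{bdds}.
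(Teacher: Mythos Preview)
Your proposal is correct and follows essentially the same approach as the paper: differentiate the weighted quantities $n_2$, $w_2$, $Y_{s,\sigma}$, feed in the three inequalities of Lemma~\ref{low_est}, split the Riesz source term $\|\nabla\Lambda^{-\sigma}(n^2)\|_{L^2}$ according to whether $\sigma\le 1$ (Kato--Ponce plus Gagliardo--Nirenberg) or $\sigma>1$ (Hardy--Littlewood--Sobolev), and close with Lemma~\ref{bdds}. Your damping-coefficient bookkeeping and the sample term checks are accurate; the only cosmetic difference is that the paper places the borderline $\sigma=1$ in the Kato--Ponce case rather than the HLS case, and it splits $\|n^2\|_{L^p}$ via H\"older as $\|n\|_{L^2}\|n\|_{L^{d/(\sigma-1)}}$ rather than $\|n\|_{L^{2p}}^2$.
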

\begin{proof}
	First, it follows from Lemma \ref{low_est} that
	\[
	\frac{d}{dt}n_2 + \frac{1+ \frac{d-\sigma}{2}}{1+t}n_2 =0
	\]
	and
	\[
	\frac{d}{dt} w_2 + \frac{2}{1+t} w_2 \lesssim  w_{1, \infty} w_2 +\frac{w_2}{(1+t)^2} + (1+t)^{-\frac{d}{2}-1}\|\nabla \Lambda^{-\sigma}(n^2)\|_{L^2}.
	\]
Here, we divide it into two cases, $0<\sigma\leq 1$ and $1<\sigma <2$. 
In particular for $1<\sigma<2$, the condition on $\sigma$ naturally implies $d \ge 2$.

\vspace{.2cm}

\noindent $\bullet$ (Case A: $0<\sigma \leq 1$)
Here, we use the Gagliardo--Nirenberg interpolation inequality and \eqref{KP_ineq} to get
\[
\begin{aligned}
	\|\nabla \Lambda^{-\sigma}(n^2)\|_{L^2} 
	&\lesssim \| n^2\|_{\dot{H}^s}^{\frac{1-\sigma}{s}} \|n^2\|_{L^2}^{1-\frac{1-\sigma}{s}}\\
	&\lesssim \lt(\|n\|_{\dot{H}^s} \|n\|_{L^\infty}\rt)^{\frac{1-\sigma}{s}} \lt(\|n\|_{L^\infty}\|n\|_{L^2}\rt)^{1-\frac{1-\sigma}{s}}\\	
	& \lesssim \lt( \|n\|_{\dot{H}^s}^{1+\frac{d}{2s}} \|n\|_{L^2}^{1-\frac{d}{2s}} \rt)^{\frac{1-\sigma}{s}} \lt( 
	\|n\|_{\dot{H}^s}^{\frac{d}{2s}} 
	\|n\|_{L^2}^{2-\frac{d}{2s}}
	\rt)^{1-\frac{1-\sigma}{s}}\\
	& =C \|n\|_{\dot{H}^s} ^{\frac{\frac d2+(1-\sigma)}{s}} \|n\|_{L^2}^{2-\frac{\frac d2+(1-\sigma)}{s}}
\end{aligned}
\]
which implies 

\[
(1+t)^{-\frac{d}{2}-1}	\|\nabla \Lambda^{-\sigma} (n^2)\|_{L^2}\lesssim  n_{s, 2}^{\frac{\frac d2+(1-\sigma)}{s}} n_2^{2-\frac{\frac d2+(1-\sigma)}{s}}.
\]
Hence, we deduce from the above estimates that

\bq\label{low_est0}
\frac{d}{dt}(n_2+ w_2) + \frac{C_{d,\sigma}}{1+t} (n_2+ w_2) 
\lesssim \frac{Z(t)}{(1+t)^2} + Z(t)^2.
\eq

\noindent $\bullet$ (Case B: $1<\sigma<2$)
In this case, we first use Hardy--Littlewood--Sobolev inequality  to get 
\[
\begin{aligned}
\|\nabla \Lambda^{-\sigma} (n^2)\|_{L^2} \lesssim \|n^2\|_{L^{ \frac{1}{\frac12 + \frac{\sigma-1}{d}}} } \lesssim \|n\|_{L^2} \|n\|_{L^{\frac{d}{\sigma-1}}},
\end{aligned}
\]
and use Lemma \ref{bdds} to yield
\[\begin{aligned}
(1+t)^{-\frac d2-1}\|\nabla \Lambda^{-\sigma}(n^2)\|_{L^2} \lesssim n_2\, n_{\frac{d}{\sigma-1}} \lesssim  n_{s, 2}^{\frac{\frac d2-(\sigma-1)}{s}} n_2^{2-\frac{\frac d2-(\sigma-1)}{s}}.
\end{aligned}\]
Thus, we also have \eqref{low_est0} in this case.\\
 
\noindent Finally, the highest-order estimate in Lemma \ref{low_est} implies
\[
\begin{aligned}
	&\frac{d}{dt} Y_{s, \sigma}  + \frac{1+\min\{1,\,  \frac{d-\sigma}{2}\}}{1+t} Y_{s, \sigma} \\
	&\quad \lesssim w_{1, \infty} Y_{s, \sigma}  + n_{1, \infty} w_{s, 2}   + \frac{Y_{s, \sigma} +n_{\infty}+w_{\infty}+n_{s-1, 2}+ w_{s-1+\frac{\sigma}{2}, 2} +n_{1, \infty}+ w_{1, \infty} }{(1+t)^2}.  
\end{aligned}
\]
By Lemma \ref{bdds}, it immediately follows that
\[
\frac{d}{dt}Y_{s,\sigma}+ \frac{C_{d, \sigma}}{1+t} Y_{s,\sigma} \lesssim Z(t)^2 + \frac{Z(t)}{(1+t)^2}.
\]
Therefore, we collect all the estimates to yield the desired result.
\end{proof}

\subsection{Proof of Theorem \ref{main_thm}}
Now, we present the proof of Theorem \ref{main_thm}. 

\vspace{.2cm}

\noindent $\bullet$ (Global-in-time existence and temporal decay) First, we combine Lemma \ref{low_est} with the arguments in \cite{DD21} to deduce the local-in-time existence of regular solutions to \eqref{npER}. For the global-in-time existence, we deduce from Proposition \ref{decay_temp1} that
\bq\label{p1}
\frac{d}{dt} Z(t)+\frac{C_{d, \sigma}}{1+t} Z(t)\leq C_0\lt( Z^2(t) + \frac{Z(t)}{(1+t)^2} \rt)
\eq
for some constant $C_0>0$ independent of $t$. 
Here, $C_{d, \sigma}=1+\min \{1, \frac{d-\sigma}{2}\}$.
If
\[
Z(0) \sim \|n_0\|_{H^s} + \|w_0\|_{H^{s+\frac\sigma2}}
\]
is sufficiently small,
then by applying Lemma \ref{tech_6} to \eqref{p1} with $a = C_{d, \sigma} > 0$, we deduce
\[
Z(t) \le \frac{2e^{\frac{C_0t}{(1+t)}}}{(1+t)^{C_{d,\sigma}}}Z(0) \quad  \forall \, t>0.
\]
This shows that the desired global existence and temporal decay estimates in Theorem \ref{main_thm}.

\vspace{.2cm}

\noindent $\bullet$ (Uniqueness) Here, we show the stability in $L^2$-Sobolev spaces which also implies the uniqueness. For $i=1,2$, let $(n_i, w_i)$ be the solution to \eqref{npER} constructed in the above on the time interval $[0,T]$ corresponding to the initial data $(n_{i,0}, w_{i,0})$. Then we first have
\[
\begin{aligned}
\frac12\frac{d}{dt}\|n_1 - n_2\|_{L^2}^2
&= -\intr (n_1- n_2) (w_1 + v)\cdot \nabla(n_1 - n_2)\,dx  -\intr (n_1 - n_2) (w_1 - w_2)\cdot \nabla n_2 \,dx\\
&\quad -\frac12 \intr (n_1-n_2)^2 \nabla \cdot (w_2+ v)\,dx  -\frac12\intr (n_1 - n_2) n_1 \nabla \cdot (w_1 - w_2)\,dx.
\end{aligned}
\]
Here, integration by parts gives
\[
\begin{aligned}
	\frac12\frac{d}{dt}\|n_1 - n_2\|_{L^2}^2
	&\le C(\|\nabla (w_1 + v)\|_{L^\infty}+\|\nabla (w_2 + v)\|_{L^\infty})\|n_1 - n_2\|_{L^2}^2 \\
	&\quad + \|\nabla n_2\|_{L^\infty} \|n_1 - n_2\|_{L^2}\|w_1 - w_2\|_{L^2}  + C\|n_1\|_{L^\infty}\|n_1 - n_2\|_{L^2}\|\nabla(w_1 - w_2)\|_{L^2}\\
	&\le C( \|n_1 - n_2\|_{L^2}^2 + \|w_1 - w_2\|_{H^1}^2),
\end{aligned}
\]
where we used Young's inequality and $C=C(T)$ is a positive constant. 
We also see
\[
\begin{aligned}
\frac12\frac{d}{dt}\|w_1 - w_2\|_{L^2}^2 &= -\intr (w_1 - w_2) \cdot [(w_1 - w_2)\cdot \nabla (w_2 + v)]\,dx  - \intr (w_1-w_2)\cdot [(w_1 +v)\cdot \nabla( w_1 - w_2)]\,dx\\
&\quad -\intr (w_1 -w_2)\cdot \nabla \Lambda^{-\sigma} ((n_1)^2 - (n_2)^2)\,dx
\end{aligned}
\]
so that 
\[
\begin{aligned}
	\frac12\frac{d}{dt}\|w_1 - w_2\|_{L^2}^2 	&\le C\lt(\|\nabla(w_1 + v)\|_{L^\infty} + \|\nabla(w_2 + v)\|_{L^\infty}\rt)\|w_1-w_2\|_{L^2}^2\\
	&\quad + \|w_1 - w_2\|_{L^2}\|\nabla\Lambda^{-\sigma}((n_1)^2 - (n_2)^2)\|_{L^2}.
\end{aligned}
\]
Note that
\[\begin{aligned}
\|\nabla\Lambda^{-\sigma}((n_1)^2 - (n_2)^2)\|_{L^2} &\le \lt\{\begin{array}{lcl}\|(n_1-n_2)(n_1 + n_2)\|_{L^{\frac{1}{\frac12 + \frac{\sigma-1}{d}}}} & \mbox{if}& \sigma \in [1,2),\\[4mm]
\|(n_1 - n_2)(n_1 + n_2)\|_{\dot{H}^{1-\sigma}} &\mbox{if} & \sigma \in (0,1).
 \end{array}\rt.\\
\end{aligned}\]
In the case $\sigma \in [1,2)$, the Sobolev embedding $ \dot{H}^{\sigma-1} \hookrightarrow L^{\frac{1}{\frac{1}{2}+\frac{\sigma-1}{d}}}$ implies
\[
\|\nabla\Lambda^{-\sigma}((n_1)^2 - (n_2)^2)\|_{L^2} \leq C\|n_1-n_2\|_{H^1}.
\]
For the case $\sigma \in (0, 1)$, we get the similar result by using \eqref{KP_ineq}. 
Thus, we obtain
\bq\label{low_diff_est1}
\frac{d}{dt}\lt(4\|n_1 -n_2\|_{L^2}^2 + \|w_1-w_2\|_{L^2}^2 \rt) \le C\|w_1-w_2\|_{L^2}^2 + C\|n_1 -n_2\|_{H^1}^2.
\eq

The above estimate \eqref{low_diff_est1} forces us to estimate higher-order derivatives of solutions. Below, we provide the estimates of  $\|n_1 - n_2\|_{\dot{H}^1}$ and $\|w_1-w_2\|_{\dot{H}^{1+\frac\sigma2}}$. Here we remark that there is a difference of $\frac\sigma2$-regularity between $n_1 - n_2$ and $w_1-w_2$ like our solution space. In order to close the estimates of uniqueness, it is required to cancel top-order terms which appear in the estimates for $\|n_1 - n_2\|_{\dot{H}^1}$ and $\|w_1 - w_2\|_{\dot{H}^{1+\frac{\sigma}{2}}}$. 

We begin with the estimate for $\|n_1 - n_2\|_{\dot{H}^1}$.
\[
\begin{aligned}
	\frac12\frac{d}{dt}\|\pa_i(n_1 - n_2)\|_{L^2}^2 
	&= -\intr \pa_i(n_1 - n_2)  \pa_i (w_1 - w_2)\cdot \nabla n_2 \,dx  -\intr \pa_i(n_1 - n_2)  (w_1 - w_2)\cdot \pa_i \nabla n_2 \,dx\\
	&\quad - \intr \pa_i (n_1 - n_2) \pa_i (w_1+v) \nabla \cdot (n_1 - n_2)\,dx  + \frac12\intr [\pa_i(n_1-n_2)]^2 \nabla \cdot (w_1 -w_2)\,dx\\
	&\quad - \frac12\intr \pa_i(n_1-n_2) \pa_i n_1 \nabla \cdot (w_1 -w_2)\,dx\\
	&\quad - \frac14\intr \pa_i(n_1-n_2) (n_1 - n_2) \nabla \cdot \pa_i (w_1 +w_2  +2v)\,dx\\
	&\quad - \frac14 \intr \pa_i (n_1 - n_2) (n_1 + n_2) \nabla \cdot \pa_i (w_1 - w_2)\,dx.
\end{aligned}
\]
Then we have
\[
\begin{aligned}
	\frac12\frac{d}{dt}\|\pa_i(n_1 - n_2)\|_{L^2}^2 
	&\le C\|n_1 - n_2\|_{H^1}\|w_1 - w_2\|_{H^1} + \|n_1 - n_2\|_{H^1} \| (w_1-w_2)\pa_i \nabla n_2\|_{L^2}\\
	&\quad + C\|n_1 - n_2\|_{H^1}^2  - \frac14 \intr \pa_i (n_1 - n_2) (n_1 + n_2) \nabla \cdot \pa_i (w_1 - w_2)\,dx.
\end{aligned}
\]
Note that 
\bq\label{diff_est_core}
\begin{aligned}
\|(w_1 - w_2)\pa_i \nabla n_2\|_{L^2} &\le \lt\{\begin{array}{lcl} \|w_1 - w_2\|_{L^\infty}\|\nabla^2 n_2\|_{L^2} & \mbox{if} & d=1,\\[2mm]
\|w_1 - w_2\|_{L^{\frac{2}{\delta}}}\|\nabla^2 n_2\|_{L^{\frac{2}{1-\delta}}}&\mbox{if}& d =2,\\[2mm]
\|w_1 - w_2\|_{L^{\frac{1}{\frac12 - \frac 1d}}}\|\nabla^2 n_2\|_{L^d} &\mbox{if} & d\ge 3. \end{array}\rt.\\
\end{aligned}
\eq
From the following Sobolev embeddings
$$H^1 \hookrightarrow L^{\infty}\quad \text{and} \quad {H}^s \hookrightarrow \dot{H}^{2}\qquad  \text{when } d=1,$$
$$ \dot{H}^{1-\delta} \hookrightarrow L^{\frac{2}{\delta}}\quad \text{and} \quad {H}^s  \hookrightarrow \dot{H}^{2+\delta}  \hookrightarrow \dot{W}^{2, \frac{2}{1-\delta}}\qquad \text{when } d=2 $$ 
for small $\delta>0$ with $s>2+\delta$, and $$\dot{H}^1 \hookrightarrow L^{\frac{1}{\frac{1}{2}-\frac{1}{d}}} \quad \text{and} \quad H^s \hookrightarrow \dot{H}^{\frac{d}{2}+1}\cap \dot{H}^2 \hookrightarrow\dot{W}^{2, d} \qquad \text{when }d\ge 3$$ 
for $s>1+\frac d2$, we arrive at 
\[
	\|(w_1 - w_2)\pa_i \nabla n_2\|_{L^2} \le C\|w_1 - w_2\|_{H^1}.
\]
Hence we use Young's inequality to obtain
\bq\label{high_diff_est_n1}
\begin{aligned}
\frac12\frac{d}{dt}\|n_1 - n_2\|_{\dot{H}^1}^2 &\le C\|n_1 - n_2\|_{H^1}^2 + C\|w_1 - w_2\|_{H^1}^2\\
&\quad - \frac14 \intr (n_1 +n_2) \nabla (n_1 - n_2) \cdot \nabla (\nabla \cdot (w_1 - w_2))\,dx.
\end{aligned}
\eq

We also estimate $\|w_1 - w_2\|_{\dot{H}^{1+\frac\sigma2}}$ as
\[\begin{aligned}
\frac12\frac{d}{dt}\|w_1 -w_2\|_{\dot{H}^{1+\frac\sigma2}}^2 &= -\intr \Lambda^{1+\frac\sigma2}(w_1-w_2) \cdot \Lambda^{1+\frac\sigma2}[ (w_1 - w_2)\cdot \nabla (w_2 + v)]\,dx\\
&\quad -\intr \Lambda^{1+\frac\sigma2}(w_1 - w_2) \cdot  \Lambda^{1+\frac\sigma2}[ (w_1 +v) \cdot \nabla (w_1 -w_2)]\,dx\\
&\quad - \intr \Lambda^{1+\frac\sigma2}(w_1 -w_2) \cdot \nabla \Lambda^{1-\frac\sigma2}((n_1)^2 - (n_2)^2)\,dx\\
&=: \sum_{i=1}^3 J_i.
\end{aligned}\]
For $J_1$, one uses \eqref{KP_ineq} and the similar arguments employed in \eqref{diff_est_core} to get
\bq\label{diff_est_core2}
\begin{aligned}
&\|\Lambda^{1+\frac\sigma2}[ (w_1 - w_2)\cdot \nabla (w_2 + v)]\|_{L^2} \\
&\quad \lesssim \|\Lambda^{1+\frac\sigma2}(w_1-w_2)\|_{L^2}\|\nabla (w_2 + v)\|_{L^\infty}   + \lt\{\begin{array}{lcl}\|w_1 -w_2\|_{L^\infty}\|\Lambda^{1+\frac\sigma2}\nabla(w_2 +v)\|_{L^2} &\mbox{if} & d=1\\
\|w_1 -w_2\|_{L^{\frac{2}{\delta}}}\|\Lambda^{1+\frac\sigma2}\nabla(w_2 + v)\|_{L^{\frac{2}{1-\delta}}} &\mbox{if} & d=2\\
\|w_1 -w_2\|_{L^{\frac{1}{\frac12 - \frac1d}}}\|\Lambda^{1+\frac\sigma2}\nabla(w_2 + v)\|_{L^d} &\mbox{if} & d\ge 3 \end{array}\rt.\\
&\quad \le C\|w_1 -w_2\|_{H^{1+\frac\sigma2}},
\end{aligned}
\eq
where we used 
\[
w_2 \in H^{s+\frac{\sigma}{2}} \quad \mbox{and} \quad v \in \dot{H}^{2+\frac{\sigma}{2}} \cap \dot{H}^{2+\delta+ \frac{\sigma}{2}}\cap  \dot{H}^{\frac{d}{2}+1+\frac{\sigma}{2}},
\]
and  $\delta>0$ is taken to be small so that $s>2+\delta$.
Thus, we have
\[
J_1 \le C\|w_1 -w_2\|_{H^{1+\frac\sigma2}}^2.
\]
Here $C=C(T)$ is a positive constant.\\

\noindent For $J_2$, we apply \eqref{moser_2term} with $f=w_1+v$, $g=\nabla(w_1-w_2)$, and $s=1+\frac\sigma2$ so that  
\bq\label{diff_est_core3}
\begin{aligned}
J_2 &= -\intr \Lambda^{1+\frac\sigma2}(w_1 - w_2) \cdot \Big( \lt[ \Lambda^{1+\frac\sigma2},\, w_1+v \rt] \cdot \nabla (w_1-w_2) \Big)\, dx \\
&\quad + \frac{1}{2} \intr  \nabla \cdot  (w_1+v)  |\Lambda^{1+\frac\sigma2}(w_1-w_2)|^2 \,dx\\
&\leq C (\|\Lambda(w_1+v)\|_{L^{\infty}} +\|\nabla(w_1+v)\|_{L^{\infty}})\|w_1-w_2\|_{H^{1+\frac{\sigma}{2}}}^2\\
&\quad +C\|w_1-w_2\|_{H^{1+\frac{\sigma}{2}}} \|\Lambda^{1+\frac\sigma2}(w_1 + v) \cdot \nabla (w_1 -w_2)\|_{L^2}\\
&\le C\|w_1 + v\|_{\dot{H}^{\frac d2 + 1 - \e}}^{1/2}\|w_1 + v\|_{\dot{H}^{\frac d2 + 1 + \e}}^{1/2}\|w_1-w_2\|_{H^{1+\frac{\sigma}{2}}}^2\\
&\quad + C\|\Lambda^{1+\frac\sigma2}(w_1 +v)\|_{L^{\frac{2d}{\sigma}}}\|w_1-w_2\|_{H^{1+\frac{\sigma}{2}}}^2\\
&\leq C \|w_1-w_2\|_{H^{1+\frac{\sigma}{2}}}^2
\end{aligned}
\eq
for sufficiently small $\e > 0$, where we used Lemma \ref{lambda_est} and $\dot{H}^{\frac d2 -\frac \sigma2} \hookrightarrow L^{\frac{2d}{\sigma}}$.

For $J_3$, one uses the argument  used in \eqref{diff_est_core} to get
\[
\begin{aligned} 
\|\nabla\cdot (\nabla(n_1 + n_2)(n_1-n_2))\|_{L^2} 
&\le 
\|\nabla(n_1-n_2) \cdot \nabla(n_1+n_2)\|_{L^2}+
\|(n_1-n_2)\nabla^2(n_1+n_2)\|_{L^2} \\
&\le C\|n_1-n_2\|_{H^1},
\end{aligned}
\]
and this implies
\[
\begin{aligned}
J_3 &= \intr \nabla\cdot (w_1 -w_2) \Lambda^2((n_1-n_2)(n_1+n_2))\,dx\\
&= \intr \nabla(\nabla\cdot (w_1-w_2)) \cdot \nabla [(n_1-n_2)(n_1+n_2)]\,dx\\
&= \intr \nabla(\nabla\cdot (w_1-w_2)) \cdot (n_1+n_2)\nabla (n_1-n_2)\,dx  - \intr \nabla\cdot(w_1-w_2)\nabla\cdot (\nabla(n_1 + n_2)(n_1-n_2))\,dx\\
&\le \intr \nabla(\nabla\cdot (w_1-w_2)) \cdot (n_1+n_2)\nabla (n_1-n_2)\,dx  + C\|w_1-w_2\|_{H^1}\|n_1-n_2\|_{H^1}.
\end{aligned}
\]
Thus, we arrive at
\[\begin{aligned}
\frac12\frac{d}{dt}\|w_1-w_2\|_{\dot{H}^{1+\frac\sigma2}}^2 &\le C\|w_1 -w_2\|_{H^{1+\frac\sigma2}}^2 + C\|n_1 -n_2\|_{H^1}^2  \\
&\quad + \intr \nabla(\nabla\cdot (w_1-w_2)) \cdot (n_1+n_2)\nabla (n_1-n_2)\,dx,
\end{aligned}\]
and we deduce from \eqref{high_diff_est_n1} that 
\bq\label{high_diff_est1}
\frac{d}{dt}\lt(4\|n_1-n_2\|_{\dot{H}^1}^2 + \|w_1-w_2\|_{\dot{H}^{1+\frac\sigma2}}^2 \rt) \le C(\|n_1 -n_2\|_{H^1}^2 + \|w_1-w_2\|_{H^{1+\frac\sigma2}}^2).
\eq
Therefore, we combine \eqref{low_diff_est1} with \eqref{high_diff_est1} to yield
\[
\frac{d}{dt}\lt(4\|n_1-n_2\|_{H^1}^2 + \|w_1-w_2\|_{H^{1+\frac\sigma2}}^2 \rt) \le C(\|n_1 -n_2\|_{H^1}^2 + \|w_1-w_2\|_{H^{1+\frac\sigma2}}^2),
\]
and subsequently Gr\"onwall's lemma gives
\[
\|n_1 -n_2\|_{H^1}^2 + \|w_1-w_2\|_{H^{1+\frac\sigma2}}^2 \le C(\|n_{1,0} -n_{2,0}\|_{H^1}^2 + \|w_{1,0}-w_{2,0}\|_{H^{1+\frac\sigma2}}^2),
\]
which implies the desired uniqueness result.

%
%
%
%

\section{Euler--Riesz system}\label{sec:4}
In this section, we give a priori estimates for system \eqref{ER_main3}.  
Similarly as before, we let $w := u-v$ to rewrite the system \eqref{ER_main3} as follows:
\bq\label{pER}
\begin{aligned}
	&\pa_t n + w \cdot \nabla n + v\cdot \nabla n +\tilde{\gamma} n \nabla \cdot w +\tilde{\gamma} n \nabla \cdot v = 0, \quad (x,t) \in \R^d \times \R_+,\cr
	&\pa_t w + w\cdot \nabla w  + v\cdot \nabla w + w\cdot \nabla v+ \tilde{\gamma} n \nabla n  =  \lambda \nabla \Lambda^{-\sigma} n^{\frac1{\tilde{\gamma}}}
\end{aligned}
\eq
with $\tilde{\gamma}=(\gamma-1)/2$. Here, the lower order estimates for \eqref{pER} proceed in the same way regardless of the choice of $\sigma$ and the sign for the potential  $\lambda =\pm 1$.
 
We first start with estimates of $H^s$-norm of solutions.
\begin{lemma}\label{pres_est1}
Let $(n,w)$ be a smooth solution to \eqref{pER} with $\lambda=\pm1$ decaying sufficiently fast at infinity. Then we have
\[
	\frac{d}{dt} \|n\|_{L^2} +  \frac{d\lt( \tilde{\gamma} -\frac{1}{2}\rt)}{1+t} \|n\|_{L^2} \lesssim  \| \nabla w\|_{L^{\infty}} \|n\|_{L^2} +  \frac{\|n\|_{L^2}}{(1+t)^2},
\]
\[
	\frac{d}{dt}\|w\|_{L^2} + \frac{1-\frac d2}{1+t}\|w\|_{L^2} \lesssim \|\nabla w\|_{L^\infty} \|w\|_{L^2} +   \frac{\|w\|_{L^2}}{(1+t)^2} +\|n\|_{L^2}\|\nabla n\|_{L^{\infty}}+ \|\nabla \Lambda^{-\sigma} (n^{\frac1{\tilde{\gamma}}})\|_{L^2},
\]
	and
	\[ 
	\begin{aligned}
		\frac{1}{2} \frac{d}{dt} \w{X}_s^2 +\frac{s-\frac{d}{2}+\min\{ d\tilde{\gamma}, 1\}}{1+t} \w{X}_s^2   
	&-\lambda \intr \Lambda^s w \cdot \nabla \Lambda^{s-\sigma} (n^{\frac1{\tilde{\gamma}}})\,dx \\
	\lesssim
	(\|\nabla w\|_{L^\infty}+ \|\nabla n\|_{L^\infty}) \w{X}_s^2 
	&+ \frac{\w{X}_s^2}{(1+t)^2} \\
	 \quad + \bigg( \frac{\|n\|_{L^\infty}+\|w\|_{L^\infty}}{(1+t)^{2+s-\frac d2}}&+ \frac{\|n\|_{\dot{H}^{s-1}} +\|w\|_{\dot{H}^{s-1}}}{(1+t)^3} + \frac{\|\nabla n\|_{L^\infty}+\|\nabla w\|_{L^\infty}}{(1+t)^{s+1-\frac d2}}\bigg)\w{X}_s,
	\end{aligned}
	\]
	where $\w{X}_s^2 := \|w\|_{\dot{H}^{s}}^2 + \|n\|_{\dot{H}^s}^2$.
\end{lemma}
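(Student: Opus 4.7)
The plan is to follow the same template as Lemma \ref{low_est} (the pressureless case), but with two adjustments to accommodate the pressure: (a) the coupling coefficient $\frac12$ is replaced by $\tilde\gamma$, so the continuity equation no longer produces the exact cancellation with the mass-type weight; (b) the momentum equation carries an extra pressure gradient $\tilde\gamma n\nabla n$ which must be cancelled against the high-order part of $\tilde\gamma n\nabla\cdot w$ when computing $\dot H^s$ estimates. The Riesz force is simply kept on the left-hand side as $-\lambda\int \Lambda^s w\cdot\nabla\Lambda^{s-\sigma}(n^{1/\tilde\gamma})\,dx$, to be treated case-by-case in the subsequent theorems.

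First I would test \eqref{pER}$_1$ against $n$ in $L^2$. Integration by parts in the transport terms $w\cdot\nabla n$ and $v\cdot\nabla n$ yields $\tfrac12(\nabla\cdot w+\nabla\cdot v)n^2$, which combines with the coupling terms $\tilde\gamma n^2(\nabla\cdot w+\nabla\cdot v)$ to give a net coefficient $(\tfrac12-\tilde\gamma)(\nabla\cdot w+\nabla\cdot v)$. Using \eqref{divv}, the $v$-contribution produces the prescribed damping coefficient $\frac{d(\tilde\gamma-\frac12)}{1+t}$ together with a remainder controlled by $\|K\|_{L^\infty}(1+t)^{-2}$, while the $w$-contribution is absorbed into $\|\nabla w\|_{L^\infty}\|n\|_{L^2}$. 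Dividing by $\|n\|_{L^2}$ yields the first inequality. For the $L^2$ estimate on $w$, I would test \eqref{pER}$_2$ against $w$: the cubic transport term gives $\|\nabla w\|_{L^\infty}\|w\|_{L^2}^2$, the combination of $\tfrac12\nabla\cdot v\,|w|^2$ (from $v\cdot\nabla w$) and $-w\cdot(w\cdot\nabla v)$ (using \eqref{dv}) produces $\frac{d/2-1}{1+t}\|w\|^2_{L^2}+O((1+t)^{-2}\|w\|_{L^2}^2)$; the pressure term is bounded by $\|n\|_{L^2}\|\nabla n\|_{L^\infty}$ after Cauchy–Schwarz, and the Riesz term by the last displayed quantity.

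The $\dot H^s$ estimate is the main bookkeeping step. Apply $\Lambda^s$ to both equations of \eqref{pER} and pair against $\Lambda^s n$ and $\Lambda^s w$. For each transport term $w\cdot\nabla\cdot$ and $v\cdot\nabla\cdot$, I would split off the commutator and use Lemma \ref{tech}: for $w$ the usual Kato–Ponce commutator bound \eqref{tech_1} suffices, while for $v$ the refined inequality \eqref{tech_2} is needed, together with the identity
\[
s\sum_k \pa_k v\cdot\Lambda^{s-2}\nabla\pa_k z = -\frac{s}{1+t}z + \frac{s}{(1+t)^2}\sum_k K_k\cdot\Lambda^{s-2}\nabla\pa_k z,
\]
which is the same algebraic trick used in Lemma \ref{low_est} and produces a clean $-\tfrac{s}{1+t}$ factor. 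The contributions $\tfrac12\nabla\cdot v\,|\Lambda^s n|^2$ (from IBP of $v\cdot\nabla$) and $-\tilde\gamma(\nabla\cdot v)|\Lambda^s n|^2$ (leading order of $\tilde\gamma\Lambda^s(n\nabla\cdot v)$ after moving $n$ out with a commutator) together with the above $-\tfrac{s}{1+t}$ assemble into $-\frac{s-d/2+d\tilde\gamma}{1+t}\|n\|_{\dot H^s}^2$, and the analogous computation for $w$ (with $-w\cdot(w\cdot\nabla v)\to -\tfrac{1}{1+t}\|w\|_{\dot H^s}^2$ as the new feature) produces $-\frac{s-d/2+1}{1+t}\|w\|_{\dot H^s}^2$; taking the worse of the two rates gives the asserted coefficient $\min\{d\tilde\gamma,1\}$.

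The main obstacle is exhibiting the top-order cancellation between the pressure gradient and the coupling term in the continuity equation. Writing
\[
\int\Lambda^s w\cdot\Lambda^s(n\nabla n)\,dx = -\int n\Lambda^s n\,\nabla\cdot\Lambda^s w\,dx - \int \Lambda^s n\,\nabla n\cdot\Lambda^s w\,dx + \int \Lambda^s w\cdot[\Lambda^s,n]\nabla n\,dx
\]
(via integration by parts after pulling $n$ outside $\Lambda^s$ through a commutator), the leading piece $-\tilde\gamma\int n\Lambda^s n\,\nabla\cdot\Lambda^s w$ exactly cancels the top-order piece $+\tilde\gamma\int n\Lambda^s n\,\nabla\cdot\Lambda^s w$ arising from $\tilde\gamma\Lambda^s(n\nabla\cdot w)$ in the $n$-equation — this is why the symmetrization $\w{X}_s^2=\|w\|_{\dot H^s}^2+\|n\|_{\dot H^s}^2$ with equal weights is the right one (unlike the pressureless case, where a factor $4$ was needed). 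The remaining commutators are all controlled by \eqref{tech_1}, and the lower-order pieces involving $v$, $\nabla v$, $\nabla^2 v$ are dominated using the decay $\|v\|_{\dot H^\ell}\lesssim(1+t)^{d/2-\ell-1}$ and $\|\nabla^2 v\|_{L^\infty}\lesssim(1+t)^{-3}$ from Proposition \ref{prop_bur}, producing precisely the source terms displayed in the statement.
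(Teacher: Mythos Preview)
Your proposal is correct and follows essentially the same approach as the paper's own proof: test the equations at the $L^2$ level using \eqref{divv}, then at the $\dot H^s$ level split off commutators via \eqref{tech_1}--\eqref{tech_2}, extract the $-\tfrac{s}{1+t}$ damping from the $\partial_k v$ identity, and cancel the top-order pieces $\tilde\gamma\int n\Lambda^s n\,\nabla\cdot\Lambda^s w$ against $\tilde\gamma\int\Lambda^s w\cdot n\nabla\Lambda^s n$ by integration by parts. One small wording point: for the term $\tilde\gamma\Lambda^s(n\nabla\cdot v)$ the paper does not ``move $n$ out with a commutator'' but rather splits $\nabla\cdot v=\tfrac{d}{1+t}+\tfrac{\mathrm{Tr}\,K}{(1+t)^2}$ first and then applies \eqref{KP_ineq} to the remainder --- your phrasing would not directly produce the leading $-\tfrac{d\tilde\gamma}{1+t}$ contribution, but the intended outcome and the rest of the argument are the same.
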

\begin{proof}
First, we readily observe that   
	\[
	\begin{aligned}
	\frac{1}{2}\frac{d}{dt} \intr |n|^2 \, dx 
	&=  \lt(\frac{1}{2}-\tilde{\gamma} \rt) \intr n^2 (\nabla \cdot  w) \, dx + \lt( \frac{1}{2} -\tilde{\gamma}\rt)\intr n^2  (\nabla \cdot v) 
	\, dx\\
	&\leq
	- \frac{d\lt( \tilde{\gamma} -\frac{1}{2}\rt)}{1+t} \intr n^2 \,dx 
	+ C \| \nabla w\|_{L^{\infty}}  \|n\|_{L^2}^2 
	+  \frac{C\|n\|_{L^2}^2}{(1+t)^2}. 
	\end{aligned}
	\]
For the $L^2$-estimate of $w$, the similar estimates give the desired estimate.
 
For highest-order estimates, we apply $\Lambda^s $ to \eqref{pER} to see that 
	\[\begin{aligned}
		&\pa_t \Lambda^s n + w \cdot \nabla \Lambda^s n + v \cdot \nabla \Lambda^s n -s \sum_{1\leq k\leq d} \pa_k v \cdot \Lambda^{s-2}\pa_k \nabla  n + \tilde{\gamma} n  \nabla \cdot \Lambda^s w+ \tilde{\gamma} \Lambda^{s}(n \nabla \cdot v) \\
		&\qquad  = [w , \Lambda^s]\cdot \nabla n+ \Big( [v, \Lambda^s]\cdot\nabla n -s \sum_{1\leq k\leq d} \pa_k v \cdot \Lambda^{s-2}\pa_k \nabla  n\Big) + \tilde{\gamma} [n, \Lambda^s]\nabla \cdot w 
			\end{aligned}
	\]
	and
		\[\begin{aligned}
		&\pa_t \Lambda^s w + w \cdot \nabla\Lambda^s w + v \cdot \nabla \Lambda^s w - s \sum_{1\leq k\leq d} \pa_k v \cdot \Lambda^{s-2} \pa_k \nabla w + \Lambda^{s} (w \cdot \nabla v)+\tilde{\gamma} n \nabla \Lambda^{s}  n \\
		&\qquad = \lambda\nabla \Lambda^{s-\sigma}(n^{\frac{1}{\tilde{\gamma}}}) 
		+ [w, \Lambda^s] \cdot\nabla w+\tilde{\gamma}[n, \Lambda^{s}]\nabla n 
		+ \Big([v, \Lambda^s] \cdot \nabla w- s \sum_{1\leq k\leq d} \pa_k v \cdot \Lambda^{s-2} \pa_k \nabla w  \Big).
	\end{aligned}
	\]
	By using the same argument as shown in Lemma \ref{low_est}, we deduce that   
	\[
	\begin{aligned}
		\frac{1}{2} \frac{d}{dt} &\|n\|_{\dot{H}^s}^2 +\frac{s-\frac{d}{2}+d\tilde{\gamma}}{1+t} \|n\|_{\dot{H}^s}^2  
		+\tilde{\gamma} \intr \Lambda^{s}n (n \nabla \cdot \Lambda^{s}w)\,dx \\
		&\lesssim
		\|\nabla w\|_{L^\infty}\|n\|_{\dot{H}^s}^2 
		+ \frac{\|n\|_{\dot{H}^s}^2}{(1+t)^2} 
		+ \lt( \frac{\|n\|_{L^\infty}}{(1+t)^{2+s-\frac d2}}+ \frac{\|n\|_{\dot{H}^{s-1}} }{(1+t)^3} + \frac{\|\nabla n\|_{L^\infty}}{(1+t)^{s+1-\frac d2}}\rt)\|n\|_{\dot{H}^s}\\
		&\quad +  \|\nabla n\|_{L^\infty}\|w\|_{\dot{H}^s}\|n\|_{\dot{H}^s} 
	\end{aligned}
	\]
	and
	\[ 
	\begin{aligned}
		\frac{1}{2} \frac{d}{dt} &\|w\|_{\dot{H}^s}^2 
		+ \frac{s -\frac d2+1}{1+t} \|w\|_{\dot{H}^s}^2
		+ \tilde{\gamma} \intr \Lambda^{s} w \cdot (n \nabla \Lambda^{s}n )\,dx 
		-\lambda \intr \Lambda^{s}w \cdot \nabla \Lambda^{s-\sigma} (n^{\frac1{\tilde{\gamma}}}) \,dx	\\
		&\lesssim
		\|\nabla w\|_{L^\infty}\|w\|_{\dot{H}^{s}}^2  +\frac{\|w\|_{\dot{H}^{s}}^2}{(1+t)^2} +\lt( \frac{\|w\|_{L^\infty}}{(1+t)^{2+s-\frac d2}} 
		+  \frac{\|w\|_{\dot{H}^{s-1}}}{(1+t)^3} +\frac{\|\nabla w\|_{L^\infty}}{(1+t)^{s+1 -\frac d2}} \rt)\|w\|_{\dot{H}^{s}}.
	\end{aligned}
	\]
	Moreover, using the integration by parts gives 
	\[ 
	\lt| \tilde{\gamma} \intr \Lambda^s n (n \nabla \cdot \Lambda^s w)\,dx+\tilde{\gamma} \intr \Lambda^s w \cdot (n \nabla \Lambda^s n)\,dx  \rt|
	\lesssim \|\nabla n\|_{L^{\infty}} \|n\|_{\dot{H}^s} \|w\|_{\dot{H}^s},
	\]
and thus, we arrive at
	\[
	\begin{aligned}
		\frac{1}{2} &\frac{d}{dt} \w{X}_s^2 +\frac{s-\frac{d}{2}+\min\{ d\tilde{\gamma}, 1\}}{1+t} \w{X}_s^2   
		-\lambda \intr \Lambda^s w \cdot \nabla \Lambda^{s-\sigma} (n^{\frac1{\tilde{\gamma}}})\,dx \\
		&\lesssim
		(\|\nabla w\|_{L^\infty}+ \|\nabla n\|_{L^\infty}) \w{X}_s^2 
		+ \frac{\w{X}_s^2}{(1+t)^2} \\
		& \quad + \lt( \frac{\|n\|_{L^\infty}+\|w\|_{L^\infty}}{(1+t)^{2+s-\frac d2}}+ \frac{\|n\|_{\dot{H}^{s-1}} +\|w\|_{\dot{H}^{s-1}}}{(1+t)^3} + \frac{\|\nabla n\|_{L^\infty}+\|\nabla w\|_{L^\infty}}{(1+t)^{s+1-\frac d2}}\rt)\w{X}_s
	\end{aligned}
	\]
	as desired.
\end{proof}

Now, we deal with three cases:
\begin{enumerate}
\item[(i)]
$1\leq \sigma \leq 2$ (Theorem \ref{main_thm2}),
\item[(ii)]
$0<\sigma<1$ with the replusive potential, i.e. $\lambda=-1$ (Theorem \ref{main_thm3}),
\item[(iii)]
 $0<\sigma<1$ with the attractive potential, i.e. $\lambda=1$ (Theorem \ref{main_thm4}).
\end{enumerate}

As mentioned in Introduction, the case (i), sub-Manev interaction potential case, is already taken into account in \cite{DD22} regardless of the sign of $\lambda$. However, our solution space is different from that of \cite{DD22}, see Remark \ref{rem2} (iii), and for the completeness of our work, we leave its proof in Appendix \ref{app.A}. Thus, in the rest of this section, we provide details of the estimates for cases (ii) and (iii).

%
%
%
%

\subsection{Repulsive interaction case}

In this subsection, we present the global well-posedness of \eqref{pER} with the repulsive potential when $\sigma \in (0,1)$. Since the lowest-order estimates are given in Lemma \ref{pres_est1}, we only need to obtain the highest-order estimates.

\begin{lemma}\label{Hs_est2_case1}
	Let the assumptions of Theorem \ref{main_thm3} be satisfied.
For $T>0$, suppose that $(n,w)$ is a smooth solution to \eqref{pER} with $\lambda=-1$ on the time interval $[0,T]$ decaying fast at infinity.
	 Then we have
	\[
\begin{aligned}
	\frac{1}{2} &\frac{d}{dt} \lt(\w{X}_s^2 
	+	\frac{1}{\tilde{\gamma}^2} \intr n^{\frac{1}{\tilde{\gamma}}-2} |\Lambda^{s-\frac{\sigma}{2}} n|^2 \,dx\rt)
	+\frac{s-\frac{d}{2}+\min\{ d\tilde{\gamma}, 1, \frac{d-\sigma}{2}\}}{1+t} \lt(\w{X}_s^2  +\frac{1}{\tilde\gamma^2}\intr n^{\frac{1}{\tilde{\gamma}}-2}  |\Lambda^{s-\frac{\sigma}{2}}n|^2\,dx\rt) \\
	&\lesssim (\|\nabla w\|_{L^\infty}+ \|\nabla n\|_{L^\infty}) \w{X}_s^2 
	+ \frac{\w{X}_s^2}{(1+t)^2} \\
	& \quad + \lt( \frac{\|n\|_{L^\infty}+\|w\|_{L^\infty}}{(1+t)^{2+s-\frac d2}}+ \frac{\|n\|_{\dot{H}^{s-1}} +\|w\|_{\dot{H}^{s-1}}}{(1+t)^3} + \frac{\|\nabla n\|_{L^\infty}+\|\nabla w\|_{L^\infty}}{(1+t)^{s+1-\frac d2}}\rt)\w{X}_s\\
	& \quad +\Bigg(\|\nabla w\|_{L^\infty}\|n\|_{\dot{H}^{s-\frac\sigma2}} + \|\nabla n\|_{L^\infty}\|w\|_{\dot{H}^{s-\frac\sigma2}}+ \frac{\|n\|_{\dot{H}^{s-\frac\sigma2}}}{(1+t)^2} + \frac{\|n\|_{\dot{H}^{s-\frac\sigma2-1}}}{(1+t)^3} \\
	&\quad + \frac{\|n\|_{L^\infty} + (1+t)\|\nabla n\|_{L^\infty}}{(1+t)^{2+s-\frac\sigma2-\frac d2}}  \Bigg)\|n\|_{L^\infty}^{\frac{1}{\tilde\gamma}-2} \|n\|_{\dot{H}^{s-\frac\sigma2}} +  \|n\|_{L^\infty}^{\frac{1}{\tilde{\gamma}}-2} \|n\|_{\dot{H}^{s-\sigma}}^{\frac{\frac d2+(1-\sigma)}{s-\sigma}}\|n\|_{L^2}^{1-\frac{\frac d2+(1-\sigma)}{s-\sigma}}\|n\|_{\dot{H}^s} \|w\|_{\dot{H}^s} ,
\end{aligned}
\]
where
	$\w{X}_{s}^2 := \|w\|_{\dot{H}^{s}}^2 + \|n\|_{\dot{H}^s}^2$. 
	\end{lemma}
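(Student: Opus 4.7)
The plan is to improve the $\dot{H}^s$ estimate of $\w{X}_s^2$ obtained in Lemma \ref{pres_est1} by augmenting it with a weighted energy estimate of $n$ at the intermediate order $s-\sigma/2$. In the repulsive case ($\lambda=-1$), the cross term $-\lambda\int \Lambda^s w\cdot\nabla\Lambda^{s-\sigma}(n^{1/\tilde{\gamma}})\,dx$ that Lemma \ref{pres_est1} leaves on the left-hand side has no sign by itself and cannot be absorbed. The idea is to match it, up to commutator remainders, against the top-order contribution coming from the evolution of the weighted quantity $F:=\frac{1}{\tilde{\gamma}^2}\int n^{1/\tilde{\gamma}-2}|\Lambda^{s-\sigma/2}n|^2\,dx$. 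The particular power $n^{1/\tilde{\gamma}-2}$ is tailored so that the formal fractional chain rule $\Lambda^{s-\sigma/2}(n^{1/\tilde{\gamma}})\approx \tilde{\gamma}^{-1}n^{1/\tilde{\gamma}-1}\Lambda^{s-\sigma/2}n$ engineers an exact cancellation between the two estimates.

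First I would derive $\tfrac12\frac{d}{dt}F$ by applying $\Lambda^{s-\sigma/2}$ to \eqref{pER}$_1$, testing against $n^{1/\tilde{\gamma}-2}\Lambda^{s-\sigma/2}n$, and isolating the top-order piece $-\tfrac{1}{\tilde{\gamma}}\int n^{1/\tilde{\gamma}-1}\Lambda^{s-\sigma/2}n\cdot\Lambda^{s-\sigma/2}\nabla\cdot u\,dx$. The $v$-part of $\nabla\cdot u$ produces, via \eqref{divv} and \eqref{K_est}, the coercive rate $\tfrac{(d-\sigma)/2}{1+t}F$ plus the usual $(1+t)^{-2}$ $K$-remainders handled as in the proof of Lemma \ref{pres_est1}. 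The time-derivative of the weight, the transport commutator $[\Lambda^{s-\sigma/2},u]\nabla n$, the Leibniz remainder $[\Lambda^{s-\sigma/2},n]\nabla\cdot u$, and the nonlinear composition $n^{1/\tilde{\gamma}-2}$ itself are then controlled by combining \eqref{KP_ineq}, \eqref{tech_1}, \eqref{moser_2term}, and Lemma \ref{tech_4} (the latter invoking assumption $(\mathcal{A}3)$ when $\tfrac{1}{\tilde{\gamma}}-2$ is non-integer).

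The cancellation step is then to integrate by parts in the $w$-cross term,
$$\int \Lambda^s w\cdot\nabla\Lambda^{s-\sigma}(n^{1/\tilde{\gamma}})\,dx=-\int \Lambda^{s-\sigma/2}\nabla\cdot w\cdot\Lambda^{s-\sigma/2}(n^{1/\tilde{\gamma}})\,dx,$$
and decompose $\Lambda^{s-\sigma/2}(n^{1/\tilde{\gamma}})=\tfrac{1}{\tilde{\gamma}}n^{1/\tilde{\gamma}-1}\Lambda^{s-\sigma/2}n+\mathcal{R}$ through a fractional Leibniz rule. Its principal piece is exactly the negative of the $w$-component of the top-order term isolated in $\frac{d}{dt}F$, so adding the two estimates eliminates the non-absorbable contribution. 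The remainder $\mathcal{R}$ paired against $\Lambda^{s-\sigma/2}\nabla\cdot w$ is then controlled by the fractional commutator bound \eqref{tech_5} (valid for $\sigma<1$) together with the Gagliardo--Nirenberg-type interpolation $\|n\|_{\dot{H}^{s-\sigma}}^{(d/2+1-\sigma)/(s-\sigma)}\|n\|_{L^2}^{1-(d/2+1-\sigma)/(s-\sigma)}$, yielding exactly the last line of the inequality stated in the lemma.

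The main obstacle will be the bookkeeping of commutators. Because $\sigma<1$, the decomposition of $\Lambda^{s-\sigma/2}(n^{1/\tilde{\gamma}})$ does not fall under the standard Kato--Ponce form and must be executed via the refined fractional Leibniz estimates of Lemma \ref{tech}(iv)--(v); simultaneously, the non-smooth composition $n^{1/\tilde{\gamma}-2}$ must be treated with Lemma \ref{tech_4}, which is why the condition $1<\gamma\le 2$ of Theorem \ref{main_thm3}, ensuring $\tfrac{1}{\tilde{\gamma}}-2\ge 0$, enters the picture. Once these commutators have been gathered and paired with their natural time-decay factors coming from $v$, the sum of the two estimates reproduces exactly the coercive rate $\tfrac{s-d/2+\min\{d\tilde{\gamma},1,(d-\sigma)/2\}}{1+t}$ and the right-hand side displayed in the statement.
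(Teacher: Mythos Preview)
Your overall architecture is right: add the weighted energy $F=\tilde\gamma^{-2}\int n^{1/\tilde\gamma-2}|\Lambda^{s-\sigma/2}n|^2\,dx$ to $\w X_s^2$ so that the repulsive cross term cancels against the top-order piece of $\frac{d}{dt}F$, and then collect commutator remainders. That is exactly what the paper does. The gap is in how you propose to execute the cancellation.

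You write $\Lambda^{s-\sigma/2}(n^{1/\tilde\gamma})=\tfrac{1}{\tilde\gamma}n^{1/\tilde\gamma-1}\Lambda^{s-\sigma/2}n+\mathcal R$ and say that $\mathcal R$, paired with $\Lambda^{s-\sigma/2}\nabla\!\cdot w$, is controlled by \eqref{tech_5}. Two problems: first, $\mathcal R$ is a \emph{composition} remainder (a fractional chain rule error), not a product commutator, and \eqref{tech_5} only bounds $\|[\nabla\Lambda^{-\sigma/2},f]g\|_{L^2}$ for a product; none of Lemma~\ref{tech} gives an $L^2$ bound on $\Lambda^r F(n)-F'(n)\Lambda^r n$. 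Second, even granting a bound on $\|\mathcal R\|_{L^2}$, Cauchy--Schwarz against $\Lambda^{s-\sigma/2}\nabla\!\cdot w$ costs $\|w\|_{\dot H^{s+1-\sigma/2}}$, which exceeds the available $H^s$ regularity since $\sigma<1$. So the last line of the lemma cannot be produced this way.

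The paper avoids both issues by arranging the cancellation at \emph{mixed} orders. On the cross-term side one uses the exact chain rule $\nabla(n^{1/\tilde\gamma})=\tfrac{1}{\tilde\gamma}n^{1/\tilde\gamma-1}\nabla n$ and the product commutator $[\Lambda^{s-\sigma},n^{1/\tilde\gamma-1}]\nabla n$ via \eqref{tech_1} (this is \eqref{riesz_est1}), so the principal piece becomes $\tfrac{1}{\tilde\gamma}\int n^{1/\tilde\gamma-1}\Lambda^s w\cdot\nabla\Lambda^{s-\sigma}n\,dx$. On the $F$ side the dangerous term is $-\tilde\gamma\int n^{1/\tilde\gamma-1}\Lambda^{s-\sigma/2}n\,\nabla\!\cdot\!\Lambda^{s-\sigma/2}w\,dx$; after an integration by parts and moving $\Lambda^{\sigma/2}$ across, this is rewritten with the commutator $[\nabla\Lambda^{-\sigma/2},n^{1/\tilde\gamma-1}]$ acting on $\Lambda^{s-\sigma/2}n$, for which \eqref{tech_5} applies with $f=n^{1/\tilde\gamma-1}$, $g=\Lambda^{s-\sigma/2}n$. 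The commutator remainder then pairs with $\Lambda^s w$ and costs only $\|w\|_{\dot H^s}\|n\|_{\dot H^s}$, giving precisely the final line of the statement; the principal piece $\tilde\gamma\int n^{1/\tilde\gamma-1}\nabla\Lambda^{s-\sigma}n\cdot\Lambda^s w\,dx$ cancels exactly against the cross-term side. In short, \eqref{tech_5} does enter, but inside the weighted $F$ estimate and applied to a genuine product commutator, not to a composition remainder as you suggest.
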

	\begin{proof}
We use \eqref{tech_1} and Lemma \ref{tech_4} to get 
\bq\label{riesz_est1}
\begin{aligned}
	 &\lt| \intr \Lambda^s w \cdot \nabla \Lambda^{s-\frac{\sigma}{2}} (n^{\frac{1}{\tilde{\gamma}}})\,dx - \frac{1}{\tilde{\gamma}}\intr n^{\frac{1}{\tilde\gamma}-1}\Lambda^s w \cdot \nabla \Lambda^{s-\sigma} n\,dx\rt|\\
	 &\quad  \le  \frac{1}{\tilde{\gamma}}\lt| \intr \Lambda^s w \cdot [\Lambda^{s-\sigma}, n^{\frac{1}{\tilde{\gamma}}-1}] \nabla  n\,dx \rt|\\
	 &\quad\le C\|w\|_{\dot{H}^s}\lt( \|\nabla n\|_{L^\infty}\|n^{\frac{1}{\tilde\gamma} -1}\|_{\dot{H}^{s-\sigma}} + \|\nabla (n^{\frac{1}{\tilde\gamma}-1})\|_{L^\infty} \|n\|_{\dot{H}^{s-\sigma}}\rt)\\
	 &\quad\le C\|\nabla n\|_{L^\infty} \|n\|_{L^\infty}^{\frac{1}{\tilde\gamma}-2}\|n\|_{\dot{H}^{s-\sigma}}\|w\|_{\dot{H}^s},
\end{aligned}
\eq
where we used $s<\frac{1}{\tilde\gamma} + \sigma -\frac12$.

 Now, we observe that
	\bq\label{n_lower_est}
	\begin{aligned}
		\frac{1}{2\tilde\gamma^2} &\frac{d}{dt} \intr n^{\frac{1}{\tilde{\gamma}}-2} |\Lambda^{s-\frac\sigma2} n|^2\,dx  +\frac{s-\frac{\sigma}{2}}{\tilde{\gamma}^2(1+t)}  \intr n^{\frac{1}{\tilde{\gamma}}-2}  |\Lambda^{s-\frac\sigma2} n|^2\,dx-\frac{1}{\tilde\gamma} \intr n^{\frac{1}{\tilde\gamma}-1}\nabla \Lambda^{s-\sigma}n \cdot \Lambda^s w \,dx\\
		&\lesssim  
		\biggl( \|\nabla w\|_{L^{\infty}} \|n\|_{\dot{H}^{s-\frac\sigma2}}
		+\frac{\|n\|_{\dot{H}^{s-\frac\sigma2}}}{(1+t)^2}   
		+ \frac{\|n\|_{L^\infty}}{(1+t)^{2+s-\frac\sigma2-\frac d2}}  \\
		&\qquad 	+ \|\nabla n\|_{L^\infty}\|w\|_{\dot{H}^{s-\frac\sigma2}} + \frac{\|\nabla n\|_{L^\infty} }{(1+t)^{1+s-\frac\sigma2-\frac d2}}
		+ \frac{\|n\|_{\dot{H}^{s-\frac\sigma2-1}}}{(1+t)^3} \biggl) \|n\|_{L^\infty}^{\frac{1}{\tilde{\gamma}}-2} \|n\|_{\dot{H}^{s-\frac\sigma2}} \\
		&\qquad +  \|n\|_{L^\infty}^{\frac{1}{\tilde{\gamma}}-2} \|n\|_{\dot{H}^{s-\sigma}}^{\frac{\frac d2+(1-\sigma)}{s-\sigma}}\|n\|_{L^2}^{1-\frac{\frac d2+(1-\sigma)}{s-\sigma}}\|n\|_{\dot{H}^s} \|w\|_{\dot{H}^s} 
	\end{aligned}
	\eq
for $\frac1{\tilde{\gamma}}\ge 2$. Here we postpone the proof of \eqref{n_lower_est} to Appendix \ref{app.B} for the smoothness of reading.

 Then we gather Lemma \ref{pres_est1}, \eqref{riesz_est1}, and \eqref{n_lower_est} to yield 
	\[
		\begin{aligned}
				\frac{1}{2} &\frac{d}{dt} \lt(\w{X}_s^2 
	+	\frac{1}{\tilde{\gamma}^2} \intr n^{\frac{1}{\tilde{\gamma}}-2} |\Lambda^{s-\frac{\sigma}{2}} n|^2 \,dx\rt)
	+\frac{s-\frac{d}{2}+\min\{ d\tilde{\gamma}, 1, \frac{d-\sigma}{2}\}}{1+t} \lt(\w{X}_s^2  +\frac{1}{\tilde\gamma^2}\intr n^{\frac{1}{\tilde{\gamma}}-2}  |\Lambda^{s-\frac{\sigma}{2}}n|^2\,dx\rt) \\
		&\lesssim (\|\nabla w\|_{L^\infty}+ \|\nabla n\|_{L^\infty}) \w{X}_s^2 
		+ \frac{\w{X}_s^2}{(1+t)^2} \\
		& \quad + \lt( \frac{\|n\|_{L^\infty}+\|w\|_{L^\infty}}{(1+t)^{2+s-\frac d2}}+ \frac{\|n\|_{\dot{H}^{s-1}} +\|w\|_{\dot{H}^{s-1}}}{(1+t)^3} + \frac{\|\nabla n\|_{L^\infty}+\|\nabla w\|_{L^\infty}}{(1+t)^{s+1-\frac d2}}\rt)\w{X}_s\\
		& \quad +\Bigg(\|\nabla w\|_{L^\infty}\|n\|_{\dot{H}^{s-\frac\sigma2}} + \|\nabla n\|_{L^\infty}\|w\|_{\dot{H}^{s-\frac\sigma2}}+ \frac{\|n\|_{\dot{H}^{s-\frac\sigma2}}}{(1+t)^2} + \frac{\|n\|_{\dot{H}^{s-\frac\sigma2-1}}}{(1+t)^3} \\
		&\quad + \frac{\|n\|_{L^\infty} + (1+t)\|\nabla n\|_{L^\infty}}{(1+t)^{2+s-\frac\sigma2-\frac d2}}  \Bigg)\|n\|_{L^\infty}^{\frac{1}{\tilde\gamma}-2} \|n\|_{\dot{H}^{s-\frac\sigma2}} +  \|n\|_{L^\infty}^{\frac{1}{\tilde{\gamma}}-2} \|n\|_{\dot{H}^{s-\sigma}}^{\frac{\frac d2+(1-\sigma)}{s-\sigma}}\|n\|_{L^2}^{1-\frac{\frac d2+(1-\sigma)}{s-\sigma}}\|n\|_{\dot{H}^s} \|w\|_{\dot{H}^s} ,
		\end{aligned}
		\]
		which is our desired estimate.
		\end{proof}

Now we investigate a priori temporal decay estimates. Similarly to the previous section, we introduce
\bq\label{temp_notation}
\begin{aligned}
	\w{n}_{\ell_1, p_1}(t) := (1+t)^{\ell_1-\frac{d}{p_1}-1}\|n(t)\|_{\dot{W}^{\ell_1,p_1}} \quad \mbox{and} \quad 
	\w{w}_{\ell_2, p_2}(t):= (1+t)^{\ell_2 - \frac{d}{p_2}-1}\|w(t)\|_{\dot{W}^{\ell_2, p_2}}
\end{aligned}
\eq
for $\ell_1 \in [0, s]$, $\ell_2 \in [0,s]$, and $p_1, p_2 \in [2,\infty]$. We also write $\tilde{n}_{p_1} :=\tilde{n}_{0, p_1}$ and $\tilde{w}_{p_2}:=w_{0, p_2}$ for simplicity.  We then define 
\bq\label{temp_notation_2}
\widetilde{Y}_{s}(t) :=  \lt(\widetilde{w}_{s,2}^2(t)  + \widetilde{n}_{s,2}^2(t)\rt)^{\frac12}  \quad \mbox{and} \quad 
\widetilde{Z}(t) :=  \lt( \widetilde{n}_2^2(t) + \widetilde{w}_2^2(t)  + \widetilde{Y}_{s}^2(t)\rt)^{\frac 12}.
\eq

Then similarly to Lemma \ref{bdds}, we get the following lemma using the Gagliardo--Nirenberg interpolation inequality.

\begin{lemma}\label{bdds2}
	If $s> \frac d2+1$ and $\ell \in [0,s]$, we have the following relations 
	\[
	\begin{aligned}
		&	\widetilde{n}_{1, \infty}(t) \lesssim  \widetilde{n}_{s, 2}^{\frac{\frac{d}{2}+1}{s}}(t)\,\widetilde{n}_2^{1-\frac{\frac{d}{2}+1}{s}}(t), \hspace{40pt} 
		\widetilde{w}_{1, \infty}(t) \lesssim  \widetilde{w}_{s, 2}^{\frac{\frac d2+1}{s}}(t)\,\widetilde{w}_2^{1-\frac{\frac d2 +1}{s}}(t), \\ 
		&\widetilde{n}_{s-\ell, 2}(t) \lesssim \widetilde{n}_{s, 2}^{\frac{s-\ell}{s}}(t)\, \widetilde{n}_{2}^{\frac{\ell}{s}}(t), \hspace{60pt}
		\widetilde{w}_{s-\ell, 2}(t)\lesssim  \widetilde{w}_{s, 2}^{\frac{s-\ell}{ s}}(t)\,\widetilde{w}_2^{\frac{\ell}{ s}}(t),\\
		&\widetilde{n}_{\infty}(t) \lesssim \widetilde{n}_{s, 2}^{\frac{\frac{d}{2}}{s}}(t)\, \widetilde{n}_{2}^{1-\frac{\frac{d}{2}}{s}}(t).
		\end{aligned}
	\] 
	In particular, each term on the left-hand side of the inequalities above is bounded by $C \w{Z}(t)$ with a constant $C>0$ independent of $t$. 
\end{lemma}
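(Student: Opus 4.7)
The plan is to apply the fractional Gagliardo--Nirenberg interpolation inequality to each relation and then verify that the powers of $(1+t)$ attached to each weighted norm balance. The key observation is that the scaling exponent $\ell - \frac{d}{p} - 1$ appearing in the definition \eqref{temp_notation} is, up to the universal shift by $-1$, exactly the scaling dimension of the homogeneous Sobolev norm $\|\cdot\|_{\dot{W}^{\ell,p}}$. Since the Gagliardo--Nirenberg inequality is scale-invariant, the shift by $-1$ on every factor will cancel, so that any scale-invariant inequality between homogeneous norms automatically lifts to the corresponding inequality between the tilded quantities.

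Concretely, for the first bound I would invoke
\[
\|n\|_{\dot{W}^{1,\infty}} \lesssim \|n\|_{\dot{H}^s}^{\theta}\, \|n\|_{L^2}^{1-\theta}, \qquad \theta = \frac{1 + \frac{d}{2}}{s},
\]
which is valid under the standing hypothesis $s > \frac{d}{2}+1$ (so that $\theta \in (0,1)$). The left-hand side carries the factor $(1+t)^{1 - 0 - 1} = 1$, while the right-hand side carries
\[
(1+t)^{\theta(s - \frac{d}{2} - 1) + (1-\theta)(-\frac{d}{2}-1)} = (1+t)^{\theta s - \frac{d}{2} - 1} = (1+t)^{0},
\]
so the weights match and the asserted bound for $\widetilde{n}_{1,\infty}$ follows. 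The bound for $\widetilde{w}_{1,\infty}$ is proved identically, and the bound for $\widetilde{n}_{\infty}$ corresponds to $\theta = \frac{d/2}{s}$, which is in $(0,1)$ for the same reason.

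For the interpolation inequalities in the middle row I would use
\[
\|n\|_{\dot{H}^{s-\ell}} \lesssim \|n\|_{\dot{H}^s}^{(s-\ell)/s}\, \|n\|_{L^2}^{\ell/s},\qquad \ell \in [0,s],
\]
and analogously for $w$. The $(1+t)$-balance check here reads
\[
(s-\ell) - \tfrac{d}{2} - 1 \;=\; \tfrac{s-\ell}{s}\bigl(s - \tfrac{d}{2} - 1\bigr) + \tfrac{\ell}{s}\bigl(-\tfrac{d}{2} - 1\bigr),
\]
which is a direct computation. Finally, the statement that every quantity on the left is controlled by $C\widetilde{Z}(t)$ is immediate from the definition \eqref{temp_notation_2}, since each right-hand side is a product of nonnegative powers of $\widetilde{n}_{s,2}, \widetilde{n}_2, \widetilde{w}_{s,2}, \widetilde{w}_2$, each of which is dominated by $\widetilde{Z}(t)$, and the exponents sum to one.

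There is no real obstacle here; the only point requiring attention is to check that the assumption $s > \frac{d}{2}+1$ suffices to make every interpolation exponent $\theta$ lie in $(0,1]$, which is the case for all five inequalities listed.
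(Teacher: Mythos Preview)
Your proposal is correct and follows exactly the approach the paper indicates: the paper simply states that Lemma~\ref{bdds2} is obtained ``using the Gagliardo--Nirenberg interpolation inequality'' in the same way as Lemma~\ref{bdds}, without writing out any details. Your explicit verification of the interpolation exponents and the balancing of the $(1+t)$ weights is the natural elaboration of that one-line justification.
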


Now we present a priori temporal decay estimates of solutions.
\begin{proposition}\label{decay_temp2_case2}
	Let the assumptions of Theorem \ref{main_thm3} be satisfied. 
	For $T>0$, suppose that $(n,w)$ is a smooth solution to \eqref{pER} with $\lambda=-1$ on the time interval $[0,T]$ decaying sufficiently fast at infinity.
	Then we have
	\[
\begin{aligned}
	\frac{1}{2} \frac{d}{dt} (\widetilde{Z}^2(t)+W(t)) 
	+\frac{C_{d,\sigma,\gamma}}{1+t} (\widetilde{Z}^2(t)+W(t)) \lesssim \widetilde{Z}^3(t) 
	+ \frac{\widetilde{Z}^2(t) }{(1+t)^2} 
	+\frac{ \widetilde{Z}^{1+\frac{1}{\tilde{\gamma}}}(t)  }{(1+t)^{2-\sigma -\frac{1}{\tilde{\gamma}}}} 
	+  \frac{\widetilde{Z}^{\frac{1}{\tilde{\gamma}}}(t)}{(1+t)^{4-\sigma -\frac{1}{\tilde{\gamma}}}}
\end{aligned}
\]
	with $C_{d, \sigma, \gamma}=1+\min\{ 1,\, \tilde{\gamma}d,\, \frac{d-\sigma}{2}\}$, where  
	$$
	W(t) := \frac{(1+t)^{2(s-\frac{d}{2}-1)}}{\tilde{\gamma}^2}\intr n^{\frac{1}{\tilde{\gamma}}-2}  |\Lambda^{s-\frac{\sigma}{2}}n|^2\, dx .
	$$
\end{proposition}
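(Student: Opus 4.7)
The plan is to derive evolution inequalities for the three rescaled pieces $\widetilde{n}_2^2$, $\widetilde{w}_2^2$, and $\widetilde{Y}_s^2 + W$, then sum them after using Lemma \ref{bdds2} to collapse every right-hand side into powers of $\widetilde{Z}$. First I rescale the two $L^2$ estimates in Lemma \ref{pres_est1}: the factor $(d/2+1)/(1+t)$ produced by differentiating $\widetilde{n}_2 = (1+t)^{-d/2-1}\|n\|_{L^2}$ combines with the damping to give $\frac{d}{dt}\widetilde{n}_2^2 + \frac{2(1+d\tilde{\gamma})}{1+t}\widetilde{n}_2^2 \lesssim \widetilde{Z}^3 + \widetilde{Z}^2/(1+t)^2$, and similarly for $\widetilde{w}_2^2$ with damping $4/(1+t)$, plus an additional forcing $2\widetilde{w}_2\,(1+t)^{-d/2-1}\|\nabla\Lambda^{-\sigma}(n^{1/\tilde{\gamma}})\|_{L^2}$ to be handled next.

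To control the Riesz forcing I chain Lemma \ref{tech_4} (applicable since $1/\tilde{\gamma}\geq 2$ and $1-\sigma<1/\tilde{\gamma}+1/2$) with Sobolev interpolation between $L^2$ and $\dot{H}^s$:
\[
\|\nabla\Lambda^{-\sigma}(n^{\frac{1}{\tilde{\gamma}}})\|_{L^2}=\|\Lambda^{1-\sigma}(n^{\frac{1}{\tilde{\gamma}}})\|_{L^2}\lesssim \|n\|_{L^\infty}^{\frac{1}{\tilde{\gamma}}-1}\|n\|_{\dot{H}^s}^{\frac{1-\sigma}{s}}\|n\|_{L^2}^{1-\frac{1-\sigma}{s}}.
\]
A direct exponent count in the rescaled variables yields $(1+t)^{-d/2-1}\|\nabla\Lambda^{-\sigma}(n^{\frac{1}{\tilde{\gamma}}})\|_{L^2}\lesssim (1+t)^{-(2-\sigma-\frac{1}{\tilde{\gamma}})}\widetilde{Z}^{\frac{1}{\tilde{\gamma}}}$, so multiplying by $\widetilde{w}_2\lesssim \widetilde{Z}$ produces precisely the third target term $\widetilde{Z}^{1+\frac{1}{\tilde{\gamma}}}/(1+t)^{2-\sigma-\frac{1}{\tilde{\gamma}}}$. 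For the high-order piece I multiply the inequality in Lemma \ref{Hs_est2_case1} by $(1+t)^{2(s-d/2-1)}$: the arithmetic identity $2(s-d/2-1)-2(s-d/2+\min\{d\tilde{\gamma},1,\frac{d-\sigma}{2}\})=-2C_{d,\sigma,\gamma}$ converts the damping into exactly $C_{d,\sigma,\gamma}/(1+t)$ acting on $\widetilde{Y}_s^2+W$. Lemma \ref{bdds2} reduces every interpolable term on the right to $\widetilde{Z}^3$ or $\widetilde{Z}^2/(1+t)^2$; the composite contributions carrying $\|n\|_{L^\infty}^{\frac{1}{\tilde{\gamma}}-2}\|n\|_{\dot{H}^{s-\sigma/2}}$ together with a $\|\nabla w\|_{L^\infty}$ or $\|\nabla n\|_{L^\infty}$ factor, as well as the final interpolated term with $\|n\|_{\dot{H}^{s-\sigma}}^{\alpha}\|n\|_{L^2}^{1-\alpha}\|n\|_{\dot{H}^s}\|w\|_{\dot{H}^s}$ (where $\alpha=(d/2+1-\sigma)/(s-\sigma)$), all collapse to $\widetilde{Z}^{1+\frac{1}{\tilde{\gamma}}}/(1+t)^{2-\sigma-\frac{1}{\tilde{\gamma}}}$; and the contribution carrying $\frac{\|n\|_{L^\infty}+(1+t)\|\nabla n\|_{L^\infty}}{(1+t)^{2+s-\sigma/2-d/2}}$ produces the fourth target term $\widetilde{Z}^{\frac{1}{\tilde{\gamma}}}/(1+t)^{4-\sigma-\frac{1}{\tilde{\gamma}}}$. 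Summing the three rescaled estimates, and using $C_{d,\sigma,\gamma}\leq\min\{1+d\tilde{\gamma},2\}$ so that the slowest damping governs, yields the proposition.

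The main obstacle is the exponent bookkeeping. Because $1/\tilde{\gamma}\geq 2$ and $\sigma<1$, the Riesz-driven forcings $(1+t)^{-(2-\sigma-\frac{1}{\tilde{\gamma}})}$ and $(1+t)^{-(4-\sigma-\frac{1}{\tilde{\gamma}})}$ actually grow in $t$, and can only be absorbed later by Lemma \ref{tech_6}, whose hypothesis $c_i<ab_i$ does not demand $c_i<1$. Extracting exactly these two exponents (and no larger) from the interplay between the rescaling weight $(1+t)^{2(s-d/2-1)}$, the composition weight $(1+t)^{\frac{1}{\tilde{\gamma}}-2}$ coming from $\|n\|_{L^\infty}^{\frac{1}{\tilde{\gamma}}-2}$, and the various Sobolev interpolation exponents is where the calculation must be done with care.
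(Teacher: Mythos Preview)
Your proposal is correct and follows essentially the same approach as the paper: rescale the $L^2$ estimates of Lemma \ref{pres_est1}, bound the Riesz forcing via Lemma \ref{tech_4} plus interpolation, multiply the highest-order estimate of Lemma \ref{Hs_est2_case1} by $(1+t)^{2(s-d/2-1)}$, and collapse everything through Lemma \ref{bdds2}. Two small terms you do not explicitly name---the pressure contribution $\|n\|_{L^2}\|\nabla n\|_{L^\infty}$ in the $\|w\|_{L^2}$ estimate (which gives $\widetilde{Z}^3$) and the pieces $\frac{\|n\|_{\dot{H}^{s-\sigma/2}}}{(1+t)^2}$, $\frac{\|n\|_{\dot{H}^{s-\sigma/2-1}}}{(1+t)^3}$ inside the composite bracket (which land in the fourth target term)---are handled by exactly the same bookkeeping, so there is no gap.
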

\begin{proof}
It follows from Lemma \ref{pres_est1} that
\[
\frac{d}{dt} \w{n}_{2} +\frac{1+d\tilde{\gamma}}{1+t} \w{n}_2 
	\lesssim 
	\w{w}_{1, \infty} \w{n}_2 +\frac{\w{n}_2}{(1+t)^2}
\]
and
\[
\frac{d}{dt} \w{w}_2 
	+ \frac{2}{1+t} \w{w}_2 \lesssim \w{w}_{1,\infty}\w{w}_2+  \frac{\w{w}_2}{(1+t)^2} 
	+   \w{n}_{1, \infty}\w{n}_2
	+ \frac{ \|\nabla \Lambda^{-\sigma} (n^{\frac1{\tilde{\gamma}}})\|_{L^2}}{(1+t)^{\frac d2 +1}}.
\]
Here, we use Lemma \ref{tech_4} to estimate
\[\begin{aligned}
\|\nabla \Lambda^{-\sigma}(n^{\frac{1}{\tilde\gamma}})\|_{L^2}\lesssim \|n\|_{L^\infty}^{\frac{1}{\tilde\gamma}-1}\|n\|_{\dot{H}^{1-\sigma}} \lesssim (1+t)^{\frac d2 + 1 + \frac{1}{\tilde\gamma} -2 +\sigma} \w{n}_\infty^{\frac{1}{\tilde\gamma}-1} \w{n}_{1-\sigma,2},
\end{aligned}\]
and it gives
\bq\label{case2_lower_est}
\frac{d}{dt}\lt(\w{n}_2 + \w{w}_2 \rt) + \frac{C_{d,\sigma,\gamma}}{1+t} \lt(\w{n}_2 + \w{w}_2\rt) \lesssim \tilde{Z}^2(t) + \frac{\tilde{Z}^{\frac{1}{\tilde\gamma}}}{(1+t)^{2-\sigma-\frac{1}{\tilde\gamma}}}.
\eq
Then, Lemma \ref{Hs_est2_case1} leads us to get
	\[
	\begin{aligned}
		\frac{1}{2} &\frac{d}{dt}\lt( \widetilde{Y}_s^2 +\frac{(1+t)^{2(s-\frac{d}{2}-1)}}{\tilde{\gamma}^2} \intr n^{\frac{1}{\tilde{\gamma}}-2} |\Lambda^{s-\frac{\sigma}{2}} n|^2 \,dx\rt) \\
		&+\frac{C_{d,\sigma,\gamma}}{1+t} \lt(\widetilde{Y}_s^2
		+\frac{(1+t)^{2(s-\frac{d}{2}-1)} }{\tilde{\gamma}^2}  \intr n^{\frac{1}{\tilde{\gamma}}-2}  |\Lambda^{s-\frac{\sigma}{2}}n|^2\,dx\rt)  \\
		&\qquad \lesssim(\widetilde{w}_{1, \infty}+ \widetilde{n}_{1, \infty}) \widetilde{Y}_s^2  
		+ \frac{\widetilde{Y}_s^2 + (\widetilde{n}_{\infty}  +\widetilde{w}_{\infty} +\widetilde{n}_{1, \infty}  +\widetilde{w}_{1, \infty}  +\widetilde{n}_{s-1,2} + \widetilde{w}_{s-1,2}) \widetilde{Y}_s }{(1+t)^2} \\
		&\qquad \quad 
		+ \frac{\lt( \widetilde{w}_{1, \infty}  \w{n}_{s-\frac{\sigma}{2},2}   + \widetilde{n}_{1, \infty} \w{w}_{s-\frac{\sigma}{2},2}   \rt)  \w{n}_{\infty}^{\frac{1}{\tilde{\gamma}}-2} \w{n}_{s-\frac{\sigma}{2},2} +\w{n}_{\infty}^{\frac{1}{\tilde{\gamma}}-2}\w{n}_{s-\sigma,2}^{\frac{\frac d2 + (1-\sigma)}{s-\sigma}}\w{n}_2^{1-\frac{\frac d2 + (1-\sigma)}{s-\sigma}} \w{w}_{s,2} \w{n}_{s,2}}{(1+t)^{2-\sigma-\frac{1}{\tilde{\gamma}}}}\\
		&\qquad  \quad+ \frac{\lt(\w{n}_\infty + \w{n}_{1,\infty} + \w{n}_{s-\frac\sigma2, 2} + \w{n}_{s-\frac\sigma2 -1,2} \rt)   \widetilde{n}_{\infty}^{\frac{1}{\tilde{\gamma}}-2} \w{n}_{s-\frac{\sigma}{2},2} }{(1+t)^{4-\sigma -\frac{1}{\tilde{\gamma}}}} .
	\end{aligned}
	\]
	Moreover, thanks to Lemma \ref{bdds2}, we obtain
	\bq\label{case2_high_est}
	\begin{aligned}
		\frac{1}{2} &\frac{d}{dt}\lt( \widetilde{Y}_s^2 +\frac{(1+t)^{2(s-\frac{d}{2}-1)}}{\tilde{\gamma}^2} \intr n^{\frac{1}{\tilde{\gamma}}-2} |\Lambda^{s-\frac{\sigma}{2}} n|^2 \,dx\rt) \\
		&+\frac{C_{d,\sigma,\gamma}}{1+t} \lt(\widetilde{Y}_s^2
		+\frac{(1+t)^{2(s-\frac{d}{2}-1)} }{\tilde{\gamma}^2}  \intr n^{\frac{1}{\tilde{\gamma}}-2}  |\Lambda^{s-\frac{\sigma}{2}}n|^2\,dx\rt)  \\
		&\lesssim \widetilde{Z}^3(t) 
		+ \frac{\widetilde{Z}^2(t) }{(1+t)^2} 
		+\frac{ \widetilde{Z}^{1+\frac{1}{\tilde{\gamma}}}(t)  }{(1+t)^{2-\sigma -\frac{1}{\tilde{\gamma}}}} 
		+  \frac{\widetilde{Z}^{\frac{1}{\tilde{\gamma}}}}{(1+t)^{4-\sigma -\frac{1}{\tilde{\gamma}}}}.
	\end{aligned}
	\eq
	Hence we combine \eqref{case2_lower_est} with \eqref{case2_high_est} to yield the desired estimate.
\end{proof}

\vspace{0.3cm}

\begin{proof}[Proof of Theorem \ref{main_thm3}]
	As we previously did, we separately show the global-in-time existence and uniqueness of solutions.
	
	\vspace{.2cm}
	
	\noindent $\bullet$ (Global-in-time existence and temporal decay): For the local-in-time existence, we may proceed as we did in Section \ref{sec:3}. For the global existence, we note that the density $n$ is nonnegative and hence
	$$
	W(t) = \frac{(1+t)^{2(s-\frac{d}{2}-1)}}{\tilde{\gamma}^2}\intr n^{\frac{1}{\tilde{\gamma}}-2}  |\Lambda^{s-\frac{\sigma}{2}}n|^2\, dx \ge 0.
	$$
This together with Proposition \ref{decay_temp2_case2} implies
		\[
		\begin{aligned}
			&\frac{1}{2}\frac{d}{dt} \mathcal{Z}^2(t) 
			+\frac{C_{d, \sigma, \gamma}}{1+t} \mathcal{Z}^2(t) \lesssim \mathcal{Z}^3(t) 
			+ \frac{\mathcal{Z}^2(t)}{(1+t)^2} 
			+\frac{\mathcal{Z}^{1+\frac{1}{\tilde{\gamma}}}(t)  }{(1+t)^{2-\sigma -\frac{1}{\tilde{\gamma}}}} 
			+  \frac{\mathcal{Z}^{\frac{1}{\tilde{\gamma}}}(t)}{(1+t)^{4-\sigma -\frac{1}{\tilde{\gamma}}}},
		\end{aligned}
		\]
where $\mathcal{Z}^2(t) :=\w{Z}^2(t)+W(t)$.
Then it follows that
		\[
\begin{aligned}
	&\frac{d}{dt} \mathcal{Z}(t) 
	+\frac{C_{d, \sigma, \gamma}}{1+t} \mathcal{Z}(t) \leq C_1 \lt( \mathcal{Z}^2(t) 
	+ \frac{\mathcal{Z}(t)}{(1+t)^2} 
	+\frac{\mathcal{Z}^{\frac{1}{\tilde{\gamma}}}(t)  }{(1+t)^{2-\sigma -\frac{1}{\tilde{\gamma}}}} 
	+  \frac{\mathcal{Z}^{\frac{1}{\tilde{\gamma}}-1}(t)}{(1+t)^{4-\sigma -\frac{1}{\tilde{\gamma}}}} \rt)
\end{aligned}
\]
for a constant $C_1>0$ independent of $t$.
If $\w{Z}(0)$ is sufficient small, then 
		 \bq\label{w_0}
		 \mathcal{Z}(0) =(\w{Z}^2(0) +W(0))^{\frac 12} \leq \w{Z}(0)+W^{\frac 12}(0) \leq \w{Z}(0)+\w{Z}^{\frac {1}{2\tilde{\gamma}}}(0) \leq C \w{Z}(0) 
		 \eq
		 for $\frac1{\tilde{\gamma}}\ge 2$, since  
		 \[
		 W(0)= \frac{1}{\tilde{\gamma}}\intr n_0^{\frac{1}{\tilde{\gamma}}-2}  |\Lambda^{s-\frac{\sigma}{2}}n_0|^2\, dx  \lesssim \|n_0\|_{L^{\infty}}^{\frac{1}{\tilde{\gamma}}-2} \|n_0\|_{\dot{H}^{s-\frac{\sigma}{2}}}^2 \lesssim \widetilde{Z}^{\frac{1}{\tilde{\gamma}}}(0).
		 \]
		Then we apply Lemma \ref{tech_6} with $a=C_{d, \sigma, \gamma}$ so that 
		\bq\label{eq}
		 \mathcal{Z}(t)\le \frac{2e^{\frac{C_1t}{1+t}}}{(1+t)^a}\mathcal{Z}(0) \quad \forall \, t\ge 0,
		\eq
		whenever
		\[
		\sigma<\min \lt\{1,\, d \tilde{\gamma}, \frac{d-\sigma}{2} \rt\}\lt(\frac{1}{\tilde{\gamma}}-1\rt),
		\]
		or equivalently 
		\[
		\gamma < 
		\min\lt\{1+\frac{2}{\sigma+1},  \ 1+ \frac{2(d-\sigma)}{d+\sigma} \rt\}
		\]
		due to 
		\[
		\frac{2(d-\sigma)}{d+\sigma}  < \frac{2(d-\sigma)}{d}.
		\]
		We note that $\frac1{\tilde{\gamma}} \ge 2$ implies $\gamma \leq 2$, from which $\gamma<1+\frac{2}{\sigma+1}$ is true if $\sigma<1$. Moreover,
		\[
		2< 1+\frac{2(d-\sigma)}{d+\sigma} \ \Longleftrightarrow \ 3\sigma <d.
		\]
		Hence the condition of $\gamma$ is required as $1<\gamma\leq 2$ and
		\[
		\gamma < 
		1+\frac{2(d-\sigma)}{d+\sigma}\quad  \mbox{if } d=1,2. 
		\]	
	Therefore, \eqref{eq} together with \eqref{w_0} implies
	\[
	\widetilde{Z}(t) \le \frac{2e^{\frac{C_1t}{1+t}}}{(1+t)^a}(\widetilde{Z}(0)+\widetilde{Z}(0)^{\frac{1}{\tilde{\gamma}}-1})\leq \frac{4e^{\frac{C_1t}{1+t}}}{(1+t)^a}\widetilde{Z}(0) \quad \forall t\ge 0
	\]
due to $\w{Z}(t)= (\mathcal{Z}^2(t)+W(t))^{1/2} \leq \mathcal{Z}(t)$,
	and this concludes the desired global existence and temporal decay estimate of solutions.
	
\vspace{.2cm}
	
	\noindent $\bullet$ (Uniqueness):   As we presented in Section \ref{sec:3}, we show the stability in $L^2 \cap \dot{H}^{1 + \frac2\sigma}$ spaces. For $i=1,2$, let $(n_i, w_i)$ be the strong solution to \eqref{pER} with $\lambda =-1$ on time interval $[0,T]$ corresponding to the initial data $(n_{i,0}, w_{i,0})$.  We first see that
\[
\begin{aligned}
\frac12\frac{d}{dt}\|n_1 - n_2\|_{L^2}^2 &= -\intr (n_1 - n_2) (w_1 - w_2)\cdot \nabla n_2 \,dx -\tilde\gamma \intr (n_1-n_2)^2 \nabla \cdot (w_2 + v)\,dx\\
&\quad +\frac{1}{2} \intr (n_1-n_2)^2 \nabla \cdot (w_1 + v)\,dx  -\tilde\gamma \intr (n_1 - n_2)  n_1 \nabla \cdot (w_1 - w_2)\,dx\\
&\le C\lt( \|n_1 - n_2\|_{L^2}^2 + \|w_1 - w_2\|_{L^2}^2\rt)  -\tilde\gamma \intr (n_1 - n_2) n_1 \nabla \cdot (w_1 - w_2)\,dx.
\end{aligned}
\]
Then integration by parts gives

\[
\frac12\frac{d}{dt}\|n_1 - n_2\|_{L^2}^2 \le C\lt( \|n_1 - n_2\|_{L^2}^2 + \|w_1 - w_2\|_{L^2}^2\rt)  +\tilde\gamma \intr n_1 \nabla (n_1 - n_2)  \cdot (w_1 - w_2)\,dx.
\]
For $\|w_1-w_2\|_{L^2}$, we have

\[
\begin{aligned}
\frac12\frac{d}{dt}\|w_1 - w_2\|_{L^2}^2 &= -\intr (w_1 - w_2) \cdot [(w_1 - w_2)\cdot \nabla (w_2 + v)]\,dx - \intr (w_1-w_2)\cdot [(w_1 +v)\cdot \nabla( w_1 - w_2)]\,dx\\
&\quad -\tilde\gamma\intr (w_1-w_2) \cdot (n_1 -n_2)\cdot \nabla n_2 \,dx  -\tilde\gamma \intr n_1 (w_1-w_2) \cdot \nabla(n_1 - n_2)\,dx\\
&\quad -\intr (w_1 -w_2)\cdot \nabla \Lambda^{-\sigma} ((n_1)^{\frac{1}{\tilde\gamma}} - (n_2)^{\frac{1}{\tilde\gamma}})\,dx,
\end{aligned}
\]
and this deduces
\[
\begin{aligned}
\frac12\frac{d}{dt}\|w_1 - w_2\|_{L^2}^2 &\le C (\|w_1-w_2\|_{L^2}^2+ \|n_1-n_2\|_{L^2}^2) + \|w_1 - w_2\|_{L^2}\|\nabla\Lambda^{-\sigma}((n_1)^{\frac{1}{\tilde\gamma}} - (n_2)^{\frac{1}{\tilde\gamma}})\|_{L^2}\\
&\quad -\tilde\gamma \intr n_1 (w_1-w_2) \cdot \nabla(n_1 - n_2)\,dx.
\end{aligned}
\]
Hence, we have
\[
\begin{aligned}
	\frac{d}{dt}\lt(\|n_1 - n_2\|_{L^2}^2 + \|w_1-w_2\|_{L^2}^2 \rt) &\le C\|w_1-w_2\|_{L^2}^2 + C\|n_1 - n_2\|_{L^2}^2 \\
	&\quad +\|w_1 - w_2\|_{L^2}\|\nabla\Lambda^{-\sigma}((n_1)^{\frac{1}{\tilde\gamma}} - (n_2)^{\frac{1}{\tilde\gamma}})\|_{L^2}.
\end{aligned}
\]
Here, for any $\alpha\ge 0$, we can write
\bq\label{power_n}
\begin{aligned}
(n_1)^{\alpha} - (n_2)^{\alpha} &= \alpha \int_0^{n_1-n_2} (n_2 + \tau)^{\alpha-1}\,d\tau=\alpha(n_1-n_2)\int_0^1 (n_2 + (n_1 -n_2)\tau)^{\alpha-1}\,d\tau.
\end{aligned}
\eq
Once we set $F_{\tilde{\gamma}}(n_1, n_2) := (n_2 + (n_1 -n_2)\tau)^{\frac{1}{\tilde\gamma}-1}$, 
the similar argument employed in \eqref{diff_est_core2} gives
\bq\label{diff_est_core22}
\begin{aligned}
	\|\Lambda^{1-\sigma}[ (n_1 - n_2) F_{\tilde{\gamma}}(n_1, n_2)]\|_{L^2} &\lesssim \|\Lambda^{1-\sigma}(n_1-n_2)\|_{L^2}\|F_{\tilde{\gamma}}(n_1, n_2)\|_{L^\infty} \\
	&\quad + 
	\lt\{\begin{array}{lcl}\|n_1 -n_2\|_{L^\infty}\|\Lambda^{1-\sigma}F_{\tilde{\gamma}}(n_1, n_2)\|_{L^2} &\mbox{if} & d=1\\
	\|n_1 -n_2\|_{L^{\frac{2}{\delta}}}\|\Lambda^{1-\sigma}F_{\tilde{\gamma}}(n_1, n_2)\|_{L^{\frac{2}{1-\delta }}} &\mbox{if} & d=2\\
	\|n_1 -n_2\|_{L^{\frac{1}{\frac{1}{2}-\frac{1}{d}}}}\|\Lambda^{1-\sigma}F_{\tilde{\gamma}}(n_1, n_2)\|_{L^{d}} &\mbox{if} & d\ge 3\\
	\end{array}\rt.\\
	&\le C\|n_1 -n_2\|_{H^{1}},
\end{aligned}
\eq
where we used 
$$
\|\Lambda^{1-\sigma} F_{\tilde{\gamma}}(n_1, n_2)\|_{L^{d}} 
\lesssim \| F_{\tilde{\gamma}}(n_1, n_2)\|_{\dot{H}^{\frac{d}{2}-\sigma}}\\
\le C (\|n_1\|_{L^{\infty}} +\|n_2\|_{L^{\infty}}) (\|n_1\|_{\dot{H}^{\frac{d}{2}-\sigma}} +\|n_2\|_{\dot{H}^{\frac{d}{2}-\sigma}}) 
$$
which follows from Lemma \ref{tech_4} with the condition $\frac{d-1}{2}-\sigma<\frac{1}{\tilde{\gamma}}$ when $d\ge3$ and it holds under ($\mathcal{A}$3)\footnote{The lower dimensional cases $d=1,2$ can be easily shown in a similar way}.
Hence we get
\[\begin{aligned}
\| \nabla \Lambda^{-\sigma} ((n_1)^{\frac{1}{\tilde\gamma}} - (n_2)^{\frac{1}{\tilde\gamma}})\|_{L^2}
&\le C \int_0^1 \lt\| \Lambda^{1-\sigma} [ (n_1-n_2) ((1-\tau)n_2 + \tau n_1)^{\frac{1}{\tilde\gamma}-1}]\,\rt\|_{L^2} d\tau \le C\|n_1 - n_2\|_{H^{1}},
\end{aligned}\]
where we used the integral versions of Minkowski inequality.
Thus, we arrive at
\bq\label{low_diff_est2}
\frac{d}{dt}\lt(\|n_1 - n_2\|_{L^2}^2 + \|w_1-w_2\|_{L^2}^2 \rt) \le C\|w_1-w_2\|_{L^2}^2 + C\|n_1 - n_2\|_{H^1}^2.
\eq

For higher-order estimates, we estimate
\[\begin{aligned}
\frac12\frac{d}{dt}\|n_1-n_2\|_{\dot{H}^{1+\frac\sigma2}}^2 
&= -\intr \Lambda^{1+\frac\sigma2}(n_1-n_2) \Lambda^{1+\frac\sigma2}\lt((w_1 + v) \cdot \nabla (n_1 - n_2) \rt) dx\\
&\quad -\intr \Lambda^{1+\frac\sigma2}(n_1-n_2) \Lambda^{1+\frac\sigma2}\lt( (w_1 -w_2)\cdot \nabla n_2\rt) dx\\
&\quad -  \tilde\gamma \intr \Lambda^{1+\frac\sigma2}(n_1-n_2) \Lambda^{1+\frac\sigma2}\lt( n_1 \nabla\cdot(w_1 - w_2)\rt) dx\\
&\quad - \tilde\gamma \intr   \Lambda^{1+\frac\sigma2}(n_1-n_2) \Lambda^{1+\frac\sigma2}\lt((n_1 - n_2) \nabla \cdot (w_2  + v)\rt) dx\\
&=: \sum_{i=1}^4 K_{1i}.
\end{aligned}\]
For $K_{11}$, one uses the similar argument for \eqref{diff_est_core3} to get 
\[
K_{11} \le C\|n_1 - n_2\|_{H^{1+\frac\sigma2}}^2.
\]
For $K_{12}$ and $K_{14}$, we proceed similarly to \eqref{diff_est_core2} and obtain
\[
K_{12} \le C\|n_1-n_2\|_{H^{1+\frac\sigma2}}\|w_1 - w_2\|_{H^{1+\frac\sigma2}} \quad \mbox{and} \quad K_{14} \le C\|n_1-n_2\|_{H^{1+\frac\sigma2}}^2.
\]
For $K_{13}$, one uses  \eqref{moser_2term} and similar estimates for \eqref{diff_est_core3} to see
	\[
	\begin{aligned}
	K_{13}&= -\tilde{\gamma}\intr \Lambda^{1+\frac\sigma2}(n_1 - n_2) \Big( [ \Lambda^{1+\frac\sigma2},\,  n_1  ] \nabla  \cdot(w_1-w_2)\Big) \,dx\\
		&\quad  -\tilde{\gamma} \intr \Lambda^{1+\frac\sigma2}(n_1 - n_2)\,   n_1\, \nabla\cdot\Lambda^{1+\frac\sigma2} (w_1 -w_2)\,dx\\
		&\le C\|n_1 - n_2\|_{H^{1+\frac\sigma2}}\|w_1 - w_2\|_{H^{1+\frac\sigma2}}  -\tilde{\gamma} \intr \Lambda^{1+\frac\sigma2}(n_1 - n_2)\,   n_1 \, \nabla\cdot\Lambda^{1+\frac\sigma2} (w_1 -w_2)\,dx.
	\end{aligned}
	\]
Thus we gather the estimates for $K_{1i}$'s to attain

\bq\label{n_diff_high1}
\begin{aligned}
\frac{1}{2}\frac{d}{dt}\|n_1-n_2\|_{\dot{H}^{1+\frac\sigma2}}^2 &\le C\|n_1 - n_2\|_{H^{1+\frac\sigma2}}^2 + C\|w_1 - w_2\|_{H^{1+\frac\sigma2}}^2\\
&\quad  -\tilde{\gamma} \intr \Lambda^{1+\frac\sigma2}(n_1 - n_2)\,   n_1 \, \nabla\cdot\Lambda^{1+\frac\sigma2} (w_1 -w_2)\,dx.
\end{aligned}
\eq
Next, we estimate  $w_1 -w_2$ in $\dot{H}^{1+\frac\sigma2}$ as follows:

\[\begin{aligned}
\frac12\frac{d}{dt}\|w_1-w_2\|_{\dot{H}^{1+\frac\sigma2}}^2&= -\intr \Lambda^{1+\frac\sigma2} (w_1 -w_2) \cdot \Lambda^{1+\frac\sigma2} ((w_1 + v)\cdot \nabla (w_1- w_2) )\,dx\\
&\quad  -\intr \Lambda^{1+\frac\sigma2} (w_1 -w_2) \cdot \Lambda^{1+\frac\sigma2} ( (w_1-w_2)\cdot \nabla(w_2 + v))\,dx\\
&\quad -\tilde\gamma \intr \Lambda^{1+\frac\sigma2}(w_1-w_2) \cdot \Lambda^{1+\frac\sigma2}( n_1\, \nabla(n_1-n_2))\,dx\\
&\quad - \tilde\gamma \intr \Lambda^{1+\frac\sigma2}(w_1-w_2) \cdot \Lambda^{1+\frac\sigma2}((n_1 - n_2) \nabla n_2)\,dx\\
&\quad - \intr \Lambda^{1+\frac{\sigma}{2}}(w_1-w_2) \cdot \nabla \Lambda^{1-\frac{\sigma}{2}}((n_1)^{\frac{1}{\tilde\gamma}} - (n_2)^{\frac{1}{\tilde\gamma}} ))\,dx\\
&=: \sum_{i=1}^5 K_{2i}.
\end{aligned}\]
Here, we use the similar estimates for \eqref{diff_est_core2} and \eqref{diff_est_core3} to get
\[
\begin{aligned}
K_{21} \le C\|w_1 -w_2\|_{H^{1+\frac\sigma2}}^2, \quad K_{22} \le C\|w_1 -w_2\|_{H^{1+\frac\sigma2}}^2, \quad \mbox{and} \quad  K_{24}\le C\|w_1-w_2\|_{H^{1+\frac\sigma2}}\|n_1-n_2\|_{H^{1+\frac\sigma2}}.
\end{aligned}
\]
For $K_{23}$, we use the integration by parts, \eqref{moser_2term}, and similar arguments used in \eqref{diff_est_core3}  to obtain
\[\begin{aligned}
K_{23} &= -\tilde\gamma \intr \Lambda^{1+\frac\sigma2}(w_1-w_2) \cdot ([\Lambda^{1+\frac\sigma2},\, n_1]\, \nabla(n_1-n_2))\,dx +\tilde\gamma \intr \nabla n_1 \Lambda^{1+\frac\sigma2} (w_1 -w_2)\cdot  \Lambda^{1+\frac\sigma2}(n_1-n_2)\,dx\\
&\quad +\tilde\gamma \intr n_1  \nabla \cdot \Lambda^{1+\frac\sigma2}(w_1 -w_2)\, \Lambda^{1+\frac\sigma2}(n_1-n_2)\,dx\\
&\le  C\|w_1-w_2\|_{H^{1+\frac\sigma2}}\|n_1 - n_2\|_{H^{1+\frac\sigma2}} +\tilde\gamma \intr n_1   \nabla \cdot \Lambda^{1+\frac\sigma2} (w_1 -w_2)\, \Lambda^{1+\frac\sigma2}(n_1-n_2)\,dx.
\end{aligned}\]
For $K_{25}$, we see that
\bq\label{diff_power}
\begin{aligned}
	 \nabla ((n_1)^{\frac{1}{\tilde\gamma}} - (n_2)^{\frac{1}{\tilde\gamma}})
	&=  \frac{1}{\tilde{\gamma}}(n_1)^{\frac{1}{\tilde\gamma}-1}\nabla (n_1 - n_2) +  \frac{1}{\tilde{\gamma}}\big((n_1)^{\frac{1}{\tilde\gamma}-1} -  (n_2)^{\frac{1}{\tilde\gamma}-1}\big)\nabla n_2
\end{aligned}
\eq
so that 
\[\begin{aligned}
	K_{25}&=  \intr \nabla  \nabla \cdot (w_1-w_2) \cdot \nabla ((n_1)^{\frac{1}{\tilde\gamma}} - (n_2)^{\frac{1}{\tilde\gamma}} ))\,dx\\
	&=\frac{1}{\tilde\gamma}\intr (n_1)^{\frac{1}{\tilde\gamma}-1} \nabla \nabla \cdot(w_1 -w_2) \cdot \nabla (n_1 - n_2)\,dx\\
	&\quad - \frac{1}{\tilde\gamma}\intr  \nabla \cdot(w_1 -w_2) \nabla \cdot \lt[ ((n_1)^{\frac{1}{\tilde\gamma}-1} -  (n_2)^{\frac{1}{\tilde\gamma}-1})\nabla n_2\rt]\,dx.
\end{aligned}\]
From \eqref{power_n} with $\alpha=\frac{1}{\tilde{\gamma}}-1$, \eqref{diff_est_core22}, and $(\mathcal{A}3)$, we deduce that
\[\begin{aligned}
K_{25}\le \frac{1}{\tilde\gamma}\intr (n_1)^{\frac{1}{\tilde\gamma}-1} \nabla \nabla \cdot(w_1 -w_2) \cdot \nabla (n_1 - n_2)\,dx  + C\|w_1-w_2\|_{H^{1+\frac\sigma2}}\|n_1 -n_2\|_{H^{1+\frac\sigma2}}.
\end{aligned}\]
Then we collect all the estimates for $K_{2i}$'s to get

\bq\label{w_diff_high1}
\begin{aligned}
\frac{1}{2}\frac{d}{dt}\|w_1-w_2\|_{\dot{H}^{1+\frac\sigma2}}^2 &\le C\|w_1 - w_2\|_{H^{1+\frac\sigma2}}^2 + \|n_1 - n_2\|_{H^{1+\frac\sigma2}}^2\\
&\quad +\tilde\gamma\intr \Lambda^{1+\frac\sigma2}(n_1-n_2)\, n_1 \, \nabla\cdot \Lambda^{1+\frac\sigma2} (w_1 -w_2)\,dx\\
&\quad + \frac{1}{\tilde\gamma}\intr (n_1)^{\frac{1}{\tilde\gamma}-1} \nabla \nabla \cdot(w_1 -w_2) \cdot \nabla (n_1 - n_2)\,dx.
\end{aligned}
\eq
Now, we combine \eqref{n_diff_high1} with \eqref{w_diff_high1} to have

\bq\label{nw_comb_est1}
\begin{aligned}
\frac{d}{dt}\lt(\|n_1 -n_2\|_{\dot{H}^{1+\frac\sigma2}}^2 + \|w_1 -w_2\|_{\dot{H}^{1+\frac\sigma2}}^2 \rt) &\le C\lt(\|n_1 -n_2\|_{H^{1+\frac\sigma2}}^2 + \|w_1 -w_2\|_{H^{1+\frac\sigma2}}^2 \rt)\\
&\quad +  \frac{1}{\tilde\gamma}\intr (n_1)^{\frac{1}{\tilde\gamma}-1} \nabla \nabla \cdot(w_1 -w_2) \cdot \nabla (n_1 - n_2)\,dx.
\end{aligned}
\eq
To conclude the estimates, we additionally estimate 
\[\begin{aligned}
\frac12&\frac{d}{dt}\intr (n_1)^{\frac{1}{\tilde\gamma}-2} |\nabla(n_1 - n_2)|^2\,dx\\
&= -\frac12\lt(\frac{1}{\tilde\gamma}-2\rt)\intr (n_1)^{\frac{1}{\tilde\gamma}-3}\big((w_1 + v)\cdot \nabla n_1 + n_1 \cdot \nabla(w_1 + v) \big) |\nabla (n_1 - n_2)|^2\,dx\\
&\quad -\intr (n_1)^{\frac{1}{\tilde\gamma}-2} \nabla (n_1 - n_2)\cdot \nabla \lt((w_1 +v)\cdot \nabla (n_1 - n_2) + (w_1 - w_2)\cdot \nabla n_2\rt)\,dx\\
&\quad -\tilde{\gamma}\intr (n_1)^{\frac{1}{\tilde\gamma}-2} \nabla (n_1 - n_2)\cdot \nabla \lt(n_1 \nabla\cdot(w_1 -w_2) + (n_1-n_2)\cdot \nabla(w_2 + v)\rt)\,dx\\
&\le C\|n_1-n_2\|_{H^1}^2 + C\|w_1-w_2\|_{H^1}^2 -\tilde{\gamma} \intr (n_1)^{\frac{1}{\tilde\gamma}-1}\nabla (n_1 -n_2)\cdot \nabla \nabla\cdot(w_1-w_2)\,dx,
\end{aligned}\]
where we used the similar arguments for \eqref{n_diff_high1}.\\

\noindent We combine this with \eqref{nw_comb_est1} to arrive at
\[\begin{aligned}
\frac{d}{dt}&\lt(\|n_1 -n_2\|_{\dot{H}^{1+\frac\sigma2}}^2  + \|w_1 -w_2\|_{\dot{H}^{1+\frac\sigma2}}^2 + \frac{1}{\tilde\gamma^2} \intr (n_1)^{\frac{1}{\tilde\gamma}-2}|\nabla(n_1 - n_2)|^2\,dx \rt)\\
&\le C\|n_1 -n_2\|_{H^{1+\frac\sigma2}}^2 + C\|w_1 -w_2\|_{H^{1+\frac\sigma2}}^2,
\end{aligned}\]
and together with \eqref{low_diff_est2}, we obtain 
\[\begin{aligned}
\frac{d}{dt}&\lt(\|n_1 -n_2\|_{H^{1+\frac\sigma2}}^2  + \|w_1 -w_2\|_{H^{1+\frac\sigma2}}^2 + \frac{1}{\tilde\gamma^2} \intr (n_1)^{\frac{1}{\tilde\gamma}-2}|\nabla(n_1 - n_2)|^2\,dx \rt)\\
&\le C\|n_1 -n_2\|_{H^{1+\frac\sigma2}}^2 + C\|w_1 -w_2\|_{H^{1+\frac\sigma2}}^2,
\end{aligned}\]
and it implies the desired uniqueness result.
\end{proof}

%
%
%
%

\subsection{Attractive interaction case}\label{ssec:attpo}
In this part, we consider the Euler system with attractive super-Manev interactions, i.e. \eqref{pER} with $\lambda = 1$ and $\sigma \in (0,1)$. For the same reason as in the repulsive case, we only provide the highest-order estimates of solutions. 

\begin{proposition}\label{Hs_est2_case2}
Let the assumptions of Theorem \ref{main_thm4} be satisfied. 
Let $(n,w)$ be a smooth solution to \eqref{pER} with $\lambda=1$ decaying sufficiently fast at infinity. Then we have
	\[
\begin{aligned}
&	\frac{1}{2} \frac{d}{dt}\lt( \w{X}_s^2
	+\sum_{k=1}^{k_0} \frac{1}{\tilde{\gamma}^{2k}} \intr n^{k(\frac{1}{\tilde{\gamma}}-2)} |\Lambda^{s-\frac{k\sigma}{2}} w|^2\, dx\rt)\\ 
	&\quad  +\frac{s-\frac{d}{2}+\min\{ d\tilde{\gamma}, 1\}}{1+t} \lt( \w{X}_s^2
	+\sum_{k=1}^{k_0} \frac{1}{\tilde{\gamma}^{2k}} \intr n^{k(\frac{1}{\tilde{\gamma}}-2)} |\Lambda^{s-\frac{k\sigma}{2}} w|^2\, dx\rt) \\
	&\qquad \lesssim
	(\|\nabla w\|_{L^\infty}+ \|\nabla n\|_{L^\infty}) \w{X}_s^2 
	+ \frac{\w{X}_s^2}{(1+t)^2} \\
	&\qquad \quad + \lt( \frac{\|n\|_{L^\infty}+\|w\|_{L^\infty}}{(1+t)^{2+s-\frac d2}}+ \frac{\|n\|_{\dot{H}^{s-1}} +\|w\|_{\dot{H}^{s-1}}}{(1+t)^3} + \frac{\|\nabla n\|_{L^\infty}+\|\nabla w\|_{L^\infty}}{(1+t)^{s+1-\frac d2}}\rt)\w{X}_s\\
	&\qquad\quad +\|\nabla n\| \sum_{k=1}^{k_0}\|n\|_{L^{\infty}}^{k(\frac{1}{\tilde{\gamma}}-2)} \lt(
	\|w\|_{\dot{H}^{s-\frac{(k-1) \sigma}{2}}} \|n\|_{\dot{H}^{s-\frac{(k+1)\sigma}{2}}} +  \|w\|_{\dot{H}^{s-\frac{k \sigma}{2}}} \|n\|_{\dot{H}^{s-\frac{k\sigma}{2}}}\rt)\\
	&\qquad\quad + \|n\|_{\dot{H}^{s-\sigma}}^{\frac{\frac d2 +1-\sigma}{s-\sigma}} \|n\|_{L^2}^{1-\frac{\frac d2+1-\sigma}{s-\sigma}} \sum_{k=1}^{k_0} \|n\|_{L^\infty}^{k\lt(\frac{1}{\tilde\gamma}-2\rt)} \|w\|_{\dot{H}^{s-\frac{(k-1)\sigma}{2}}} \|n\|_{\dot{H}^{s-\frac{(k-1)\sigma}{2}}}\\
	&\qquad\quad + \sum_{k=1}^{k_0}\|n\|_{L^{\infty}}^{k(\frac{1}{\tilde{\gamma}}-2)} \|w\|_{\dot{H}^{s-\frac{k\sigma}{2} }} \lt( \frac{\|w\|_{\dot{H}^{s-\frac{k\sigma}{2}}}}{(1+t)^2}+\frac{\|\nabla w\|_{L^\infty}}{(1+t)^{s-\frac{k\sigma}{2}+1 - \frac d2}} +\frac{\|w\|_{\dot{H}^{s-\frac{k \sigma}{2}-1}}}{(1+t)^3} \rt)\\
	&\qquad\quad + \|n\|_{L^\infty}^{(k_0+1)(\frac{1}{\tilde\gamma}-2)}\|\nabla n\|_{L^\infty}\|w\|_{\dot{H}^{s-\frac{k_0\sigma}{2}}}  \|n\|_{\dot{H}^{s-\frac{(k_0+2)\sigma}{2}}}\\
		&\qquad\quad +\|n\|_{L^\infty}^{(k_0+1)(\frac{1}{\tilde\gamma}-2)+1}\|w\|_{\dot{H}^{s-\frac{k_0\sigma}{2}}}\|n\|_{\dot{H}^{s+1-\frac{(k_0+2)\sigma}{2}}},
	\end{aligned}
\]
	where
	$\w{X}_s^2 := \|w\|_{\dot{H}^{s}}^2 + \|n\|_{\dot{H}^s}^2$ and $k_0$ is the minimum integer value of $k$ such that 
	\[
	\frac{2(1-\sigma)}{\sigma} <k \leq \frac{2(s-2)}{\sigma}.
	\]
\end{proposition}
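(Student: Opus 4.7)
The proof adapts the regularity-tuning iteration of \cite{CJLpre} to the attractive super-Manev regime. The starting point is the basic $\w{X}_s^2$ identity of Lemma \ref{pres_est1}, which carries the problematic integral $-\lambda \intr \Lambda^s w \cdot \nabla \Lambda^{s-\sigma}(n^{1/\tilde{\gamma}})\,dx$. For $\lambda = 1$ this has the wrong sign and, in contrast to the repulsive case treated in Lemma \ref{Hs_est2_case1}, admits no coercive control. A single application of Lemma \ref{tech}(v) together with Lemma \ref{tech_4} (made legitimate by assumption $(\mathcal{A}3)$) rewrites it as $-\tilde{\gamma}^{-1} \intr n^{1/\tilde{\gamma} - 1} \Lambda^s w \cdot \nabla \Lambda^{s-\sigma} n\,dx$ modulo manageable commutator errors; since the effective differential order of $n$ here is $s - \sigma + 1 < s+1$, it cannot be absorbed by the pressure contribution $\tilde{\gamma} \intr n\Lambda^s w \cdot \nabla \Lambda^s n\,dx$ at the same level.

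The plan is to introduce, for each $k = 1, \ldots, k_0$, the auxiliary weighted energy
\begin{align*}
E_k(t) := \frac{1}{\tilde{\gamma}^{2k}} \intr n^{k(1/\tilde{\gamma} - 2)} |\Lambda^{s - k\sigma/2} w|^2\, dx,
\end{align*}
which is nonnegative because the assumption $\gamma \leq 2 - \sigma/d$ forces $1/\tilde{\gamma} \geq 2$. Differentiating $E_k$ in time using $\eqref{pER}_2$, the pressure term $\tilde{\gamma} n \nabla n$ yields the leading contribution $-2\tilde{\gamma}^{1-2k}\intr n^{k(1/\tilde{\gamma}-2)+1} \Lambda^{s-k\sigma/2} w \cdot \nabla \Lambda^{s-k\sigma/2} n\,dx$, while the unpaired Riesz residual inherited from level $k-1$ reads $2\tilde{\gamma}^{1-2k}\intr n^{(k-1)(1/\tilde{\gamma}-2)+1/\tilde{\gamma}-1}\Lambda^{s-(k-1)\sigma/2} w \cdot \nabla \Lambda^{s-(k-1)\sigma/2-\sigma} n\,dx$ after extracting the factor $n^{1/\tilde{\gamma}-1}/\tilde{\gamma}$ from $\nabla \Lambda^{-\sigma}(n^{1/\tilde{\gamma}})$ via Lemma \ref{tech}(v). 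Transferring a half-power $\Lambda^{\sigma/2}$ from $w$ to $n$ through integration by parts aligns the two integrals; they cancel up to the shift commutator $[\Lambda^{\sigma/2}, n^{k(1/\tilde{\gamma}-2)+1}]$, and a fresh Riesz residual of regularity $s - (k+1)\sigma/2$ in $n$ is handed to the next level.

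The iteration is stopped at the smallest integer $k_0 > 2(1-\sigma)/\sigma$; this threshold forces the final uncancelled residual to involve $\|n\|_{\dot{H}^{s+1-(k_0+2)\sigma/2}}$ with exponent at most $s$, so that it is absorbed directly into $\w{X}_s$. The upper bound $k_0 \leq 2(s-2)/\sigma$ keeps the intermediate regularities $s - k\sigma/2 \geq 2$ inside the range where Lemma \ref{tech}(v) and Lemma \ref{tech_4} apply. Transport and $v$-commutators in $\frac{d}{dt}E_k$ are treated exactly as in Lemma \ref{pres_est1}, the extra factors $\|n\|_{L^\infty}^{k(1/\tilde{\gamma}-2)}$ being extracted from the weight $n^{k(1/\tilde{\gamma}-2)}$ using Lemma \ref{tech_4}.

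The principal technical obstacle is the bookkeeping of three families of commutators arising at every level: the transport commutators $[w+v, \Lambda^{s-k\sigma/2}]$, the symmetrization commutator $[\Lambda^{s-k\sigma/2}, n]$ from the pressure, and the shift commutator $[\Lambda^{\sigma/2}, n^{k(1/\tilde{\gamma}-2)+1}]$ enabling the cancellation. The interpolation factor $\|n\|_{\dot{H}^{s-\sigma}}^{(d/2+1-\sigma)/(s-\sigma)} \|n\|_{L^2}^{1-(d/2+1-\sigma)/(s-\sigma)}$ in the claimed estimate originates from controlling the level-zero shift commutator via Lemma \ref{tech}(iv) together with a Gagliardo--Nirenberg interpolation against $\|n\|_{L^2}$, while the last two terms involving $\|n\|_{\dot{H}^{s+1-(k_0+2)\sigma/2}}$ precisely encode the uncancelled residual remaining at level $k_0$.
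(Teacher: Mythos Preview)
Your proposal is correct and follows essentially the same regularity-tuning iteration as the paper: start from Lemma~\ref{pres_est1}, introduce the weighted energies $E_k$, and telescope the Riesz residual $\mathcal{R}_s(k)$ against the pressure contribution $\mathcal{P}_s(k+1)$ via Lemmas~\ref{Gap} and~\ref{higher_it} until $k=k_0$ renders the final residual directly estimable. Two minor points: the shift commutator that enables the alignment is $[\nabla\Lambda^{-\sigma/2}, n^{(k+1)(1/\tilde\gamma-2)+1}]$ (Lemma~\ref{tech}(iv)) rather than $[\Lambda^{\sigma/2},\cdot]$, and the interpolation factor $\|n\|_{\dot H^{s-\sigma}}^{\cdots}\|n\|_{L^2}^{\cdots}$ arises at every level of the iteration (the $\sfJ_1$ term in the proof of Lemma~\ref{Gap}), not only at level zero.
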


To handle the attractive super-Manev interaction potential, we shall use the regularity-tuning iteration argument proposed in \cite{CJLpre}. We write
\[
\mathcal{R}_{\ell}(k):=  \intr n^{k(\frac{1}{\tilde{\gamma}}-2)} \Lambda^{\ell-\frac{k \sigma}{2}}w \cdot \Lambda^{\ell -\frac{k \sigma}{2}-\sigma}\nabla (n^{\frac{1}{\tilde{\gamma}}}) \, dx
\]
and
\[ 
\mathcal{P}_{\ell}(k):= \tilde{\gamma} \intr n^{k(\frac{1}{\tilde{\gamma}}-2)}\Lambda^{\ell-\frac{k\sigma}{2}} w \cdot \Lambda^{\ell-\frac{k\sigma}{2}} ( n\nabla n)\, dx
\]
for any integer $ 0\leq k < \frac{2\ell}\sigma$ and $0<\sigma<1$.

Then we need to show the following two lemmas. Since their proofs are rather technical, for the smoothness of reading, we postpone them to Appendices \ref{app.C} and \ref{app.D}, respectively.

\begin{lemma}\label{Gap}
 	Let the assumptions of Theorem \ref{main_thm4} be satisfied.
 	Then we have
	\[ 
	\begin{aligned}
\begin{aligned}
	\lt|\mathcal{R}_s (k) - \frac{1}{\tilde{\gamma}^2} \mathcal{P}_s (k+1) \rt|
	&\lesssim \|n\|_{L^{\infty}}^{(k+1)(\frac{1}{\tilde{\gamma}}-2)} 
	\|\nabla n\|_{L^{\infty}} \|w\|_{\dot{H}^{s-\frac{k \sigma}{2}}} \|n\|_{\dot{H}^{s-\frac{(k+2)\sigma}{2}}}\\
	&\quad + \|n\|_{L^{\infty}}^{(k+1)(\frac{1}{\tilde{\gamma}}-2)} 
	\|\nabla n\|_{L^{\infty}} \|w\|_{\dot{H}^{s-\frac{(k+1) \sigma}{2}}} \|n\|_{\dot{H}^{s-\frac{(k+1)\sigma}{2}}}\\
	&\quad +\|n\|_{L^\infty}^{(k+1)(\frac{1}{\tilde\gamma}-2)} \|n\|_{\dot{H}^{s-\sigma}}^{\frac{\frac d2 +1-\sigma}{s-\sigma}} \|n\|_{L^2}^{1-\frac{\frac d2+1-\sigma}{s-\sigma}}\|w\|_{\dot{H}^{s-\frac{k\sigma}{2}}} \|n\|_{\dot{H}^{s-\frac{k\sigma}{2}}}
\end{aligned}
	\end{aligned}
	\]
	for any integer $k \in [0,\, \frac{2s}\sigma-2]$.
\end{lemma}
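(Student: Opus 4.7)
My goal is to decompose the difference $\mathcal{R}_s(k) - \tilde{\gamma}^{-2}\mathcal{P}_s(k+1)$ as a sum of two commutator errors, respectively handled by the positive-order estimate \eqref{tech_1} and by the negative-order estimate \eqref{tech_5}. The starting observation is the chain rule $\nabla(n^{1/\tilde{\gamma}}) = \tilde{\gamma}^{-1} n^{1/\tilde{\gamma} - 2}(n\nabla n)$, which brings $\mathcal{R}_s(k)$ and $\mathcal{P}_s(k+1)$ into the same underlying form (both feature $n\nabla n$), modulo how the total $\sigma$-power of $\Lambda$ is split between the two factors of the integrand. Writing $A := n^{(k+1)(1/\tilde{\gamma}-2)}$, the plan is then as follows. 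First, pull $n^{1/\tilde{\gamma}-2}$ out of $\Lambda^{s-(k+2)\sigma/2}$, producing a main term $\tilde{\gamma}^{-1}\intr A\,\Lambda^{s-k\sigma/2}w\cdot\Lambda^{s-(k+2)\sigma/2}(n\nabla n)\,dx$ plus an error $\mathcal{E}_1$ involving $[\Lambda^{s-(k+2)\sigma/2}, n^{1/\tilde{\gamma}-2}](n\nabla n)$. Second, using the factorizations $\Lambda^{s-k\sigma/2} = \Lambda^{\sigma/2}\Lambda^{s-(k+1)\sigma/2}$ and $\Lambda^{s-(k+2)\sigma/2} = \Lambda^{-\sigma/2}\Lambda^{s-(k+1)\sigma/2}$ together with the self-adjointness of $\Lambda^{-\sigma/2}$, transfer one copy of $\Lambda^{\sigma/2}$ from the $w$-factor to the $(n\nabla n)$-factor, producing the target $\tilde{\gamma}^{-2}\mathcal{P}_s(k+1)$ and a second error
\[
\mathcal{E}_2 = \tilde{\gamma}^{-1}\intr [\Lambda^{-\sigma/2}, A]\,\Lambda^{s-k\sigma/2}w\cdot\Lambda^{s-(k+1)\sigma/2}(n\nabla n)\,dx,
\]
so that $\mathcal{R}_s(k) - \tilde{\gamma}^{-2}\mathcal{P}_s(k+1) = \mathcal{E}_1 + \mathcal{E}_2$.

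For $\mathcal{E}_1$, I would apply \eqref{tech_1} and then Lemma~\ref{tech_4} to transfer powers of $n$ into Sobolev norms; this yields the bound $\|n\|_{L^\infty}^{(k+1)(1/\tilde{\gamma}-2)}\|\nabla n\|_{L^\infty}\|w\|_{\dot{H}^{s-k\sigma/2}}\|n\|_{\dot{H}^{s-(k+2)\sigma/2}}$, which matches the first term of the claim. For $\mathcal{E}_2$, since $[\Lambda^{-\sigma/2}, A]$ has no direct $L^2$-commutator bound available in Lemma~\ref{tech}, I would use $n\nabla n = \tfrac12\nabla n^2$ so that $\Lambda^{s-(k+1)\sigma/2}(n\nabla n) = \tfrac12\nabla\Lambda^{s-(k+1)\sigma/2}n^2$, and then invoke the operator identity $[\Lambda^{-\sigma/2}, A]\nabla g = [\nabla\Lambda^{-\sigma/2}, A]g - \Lambda^{-\sigma/2}(\nabla A\cdot g)$. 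The first piece is controlled by \eqref{tech_5} applied with $f = A$ and $g = \Lambda^{s-(k+1)\sigma/2}n^2$: the $\|\Lambda^{1-\sigma}A\|_{L^\infty}$-contribution (combined with Kato--Ponce applied to $\|n^2\|_{\dot{H}^{s-k\sigma/2}}$) produces the second term of the claim, while the $\|A\|_{\dot{H}^{d/2+1-\sigma}}$-contribution, after the Gagliardo--Nirenberg interpolation
\[
\|A\|_{\dot{H}^{d/2+1-\sigma}} \lesssim \|n\|_{L^\infty}^{(k+1)(1/\tilde{\gamma}-2)}\,\|n\|_{\dot{H}^{s-\sigma}}^{\frac{d/2+1-\sigma}{s-\sigma}}\,\|n\|_{L^2}^{1-\frac{d/2+1-\sigma}{s-\sigma}},
\]
produces the third term. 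The residual piece $\Lambda^{-\sigma/2}(\nabla A\cdot g)$ is absorbed into the second term via the pointwise bound $\|\nabla A\|_{L^\infty} \lesssim \|n\|_{L^\infty}^{(k+1)(1/\tilde{\gamma}-2)-1}\|\nabla n\|_{L^\infty}$ together with self-adjointness of $\Lambda^{-\sigma/2}$.

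The principal obstacle is the negative-order commutator in $\mathcal{E}_2$: Lemma~\ref{tech} provides a sharp bound only for $[\nabla\Lambda^{-\sigma/2}, \cdot]$, so the key trick is to generate an extra derivative on the $n$-side via $n\nabla n = \tfrac12\nabla n^2$ before applying \eqref{tech_5}, thereby converting a negative-order commutator into one covered by the lemma. The delicate bookkeeping is to track the precise $\sigma/2$-shifts so that the Sobolev indices $s - k\sigma/2$, $s - (k+1)\sigma/2$, and $s - (k+2)\sigma/2$ attached to $w$ and $n$ appear in the correct coupling on the right-hand side. Assumption $(\mathcal{A}3)$ enters through Lemma~\ref{tech_4}, which is needed to control $\|A\|_{\dot{H}^{d/2+1-\sigma}}$ and $\|\Lambda^{1-\sigma}A\|_{L^\infty}$ in terms of pure $n$-norms when $1/\tilde{\gamma}$ is not an integer.
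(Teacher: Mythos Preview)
Your overall strategy — decompose $\mathcal{R}_s(k) - \tilde\gamma^{-2}\mathcal{P}_s(k+1)$ into a positive-order commutator handled by \eqref{tech_1} and a negative-order commutator handled by \eqref{tech_5} — is correct, and is essentially what the paper does. However, your specific choice of grouping creates a genuine gap tied to the restriction $\alpha\ge 1$ in Lemma~\ref{tech_4}.

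You write $\nabla(n^{1/\tilde\gamma}) = \tilde\gamma^{-1}n^{1/\tilde\gamma-2}(n\nabla n)$ and pull the factor $n^{1/\tilde\gamma-2}$ out of $\Lambda^{s-(k+2)\sigma/2}$; likewise your weight in $\mathcal{E}_2$ is $A = n^{(k+1)(1/\tilde\gamma-2)}$. To estimate the resulting commutators via \eqref{tech_1} or \eqref{tech_5} you then need Lemma~\ref{tech_4} for $\|n^{1/\tilde\gamma-2}\|_{\dot H^{s-(k+2)\sigma/2}}$ and $\|A\|_{\dot H^{d/2+1-\sigma}}$, and the pointwise bound $\|\nabla A\|_{L^\infty}\lesssim \|n\|_{L^\infty}^{(k+1)(1/\tilde\gamma-2)-1}\|\nabla n\|_{L^\infty}$. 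All of these require the exponent on $n$ to be at least $1$. For $k=0$ this forces $1/\tilde\gamma - 2 \ge 1$, i.e.\ $\gamma\le 5/3$, which is \emph{not} implied by the hypotheses of Theorem~\ref{main_thm4} (e.g.\ $d=3$, $\sigma=1/2$, $\gamma=11/6$ is admissible but gives $1/\tilde\gamma-2=0.4<1$). In that regime $n^{1/\tilde\gamma-3}\nabla n$ is unbounded where $n$ vanishes, so $\|\nabla A\|_{L^\infty}$ need not be finite, and Lemma~\ref{tech_4} is unavailable.

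The paper avoids this by factoring instead as $\nabla(n^{1/\tilde\gamma}) = \tilde\gamma^{-1}n^{1/\tilde\gamma-1}\nabla n$ and keeping the weight $n^{(k+1)(1/\tilde\gamma-2)+1}$ throughout; its exponent is always $\ge 1/\tilde\gamma - 1 \ge 1$ under $\gamma\le 2$. Concretely, the paper decomposes $\tilde\gamma^{-1}\mathcal{P}_s(k+1)$ via integration by parts into four pieces $\sfJ_1,\dots,\sfJ_4$, with $\sfJ_2$ cancelling the main term from $\mathcal{R}_s(k)$, $\sfJ_1$ being the $[\nabla\Lambda^{-\sigma/2}, n^{(k+1)(1/\tilde\gamma-2)+1}]$ commutator handled by \eqref{tech_5}, and $\sfJ_3+\sfJ_4$ giving the second line of the claim. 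Your scheme can be repaired by the same one-line adjustment: keep one extra factor of $n$ with the weight rather than with the argument of the commutator. A minor additional point: in your $\mathcal{E}_2$ the commutator $[\Lambda^{-\sigma/2},A]$ acts on $\Lambda^{s-k\sigma/2}w$, not on $\nabla g$, so before invoking your operator identity you must first move the commutator to the other factor via skew-adjointness; this should be stated explicitly.
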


\begin{lemma} \label{higher_it}
	Let the assumptions of Theorem \ref{main_thm4} be satisfied.
	Then we have
\[
\begin{aligned}
	\frac{1}{2} &\frac{d}{dt} \intr n^{k(\frac{1}{\tilde{\gamma}}-2)} |\Lambda^{s-\frac{k\sigma}{2}} w|^2\, dx 
	+\frac{s-\frac{d}{2}+1 }{1+t}  
	\intr n^{k(\frac{1}{\tilde{\gamma}}-2)}  |\Lambda^{s-\frac{k\sigma}{2}} w|^2\, dx \\ 
	& \le C\|n\|_{L^{\infty}}^{k(\frac{1}{\tilde{\gamma}}-2)}  \|\nabla w  \|_{L^{\infty}} \|w\|_{\dot{H}^{s-\frac{k\sigma}{2}}}^2 \\
	&\quad +C\|n\|_{L^{\infty}}^{k(\frac{1}{\tilde{\gamma}}-2)} \|w\|_{\dot{H}^{s-\frac{k \sigma}{2} }} \lt( \frac{\|w\|_{\dot{H}^{s-\frac{k\sigma}{2}}}}{(1+t)^2}+\frac{\|\nabla w\|_{L^\infty}}{(1+t)^{s-\frac{k \sigma}{2}+1 -\frac d2}} +\frac{\|w\|_{\dot{H}^{s-\frac{k \sigma}{2}-1}}}{(1+t)^3} \rt)\\
	&\quad -\mathcal{P}_s(k) + \mathcal{R}_s(k)
\end{aligned}
\]
	for any integer $k \in [1, \, \frac{2(s-1)}\sigma)$.
\end{lemma}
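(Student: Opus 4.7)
The plan is to differentiate the weighted energy $E_k := \intr n^{\alpha}|\Lambda^\ell w|^2\, dx$ with $\alpha := k(1/\tilde\gamma - 2)$ and $\ell := s - k\sigma/2$. From the continuity equation $\pa_t n + (w+v)\cdot \nabla n + \tilde\gamma n\,\nabla\cdot(w+v)=0$, one derives
\[
\pa_t n^\alpha + \nabla\cdot\bigl(n^\alpha (w+v)\bigr) + (\alpha\tilde\gamma-1)n^\alpha\,\nabla\cdot(w+v) = 0,
\]
while applying $\Lambda^\ell$ to the $w$-equation in \eqref{pER} with $\lambda=1$ expresses $\intr n^\alpha \Lambda^\ell w\cdot \Lambda^\ell \pa_t w\,dx$ as a sum of transport, drift, $w\cdot\nabla v$, pressure, and Riesz contributions. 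The last two are precisely $-\mathcal{P}_s(k)$ and $\mathcal{R}_s(k)$, so I would leave them unexpanded on the right-hand side.

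The key cancellation is as follows. The top-order transport and drift pieces produce $-\frac12\intr n^\alpha (w+v)\cdot \nabla |\Lambda^\ell w|^2\,dx$, which cancels exactly (after integration by parts) with the divergence contribution $\nabla\cdot(n^\alpha(w+v))$ coming from $\pa_t n^\alpha$. The surviving zeroth-order piece is $\frac{1-\alpha\tilde\gamma}{2}\intr n^\alpha \nabla\cdot(w+v)|\Lambda^\ell w|^2\,dx$: the $\nabla\cdot w$-part is absorbed into the $\|\nabla w\|_{L^\infty}\|w\|_{\dot H^\ell}^2$ term of the lemma, while by \eqref{divv} the $\nabla\cdot v$-part contributes a leading coefficient $\frac{d(1-\alpha\tilde\gamma)}{2(1+t)}E_k$ plus an $O((1+t)^{-2})$ remainder involving $\mathrm{Tr}\,K$.

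For the drift commutator $-\intr n^\alpha \Lambda^\ell w\cdot \sum_j [\Lambda^\ell,v_j]\pa_j w\,dx$, Lemma \ref{tech}(iii) identifies the leading part as $\ell\sum_j\nabla v_j\cdot \Lambda^{\ell-2}\nabla\pa_j w$, which, via $\nabla v = I_d/(1+t) + K/(1+t)^2$ and $\Lambda^{\ell-2}\Delta=-\Lambda^\ell$, collapses to $-\Lambda^\ell w/(1+t)$ plus a $K/(1+t)^2$ correction; this yields the coefficient $-\frac{\ell}{1+t}E_k$. The remainder in \eqref{tech_2} combined with the decay estimates $\|v\|_{\dot H^\ell}\lesssim (1+t)^{d/2-\ell-1}$ and $\|\nabla^2 v\|_{L^\infty}\lesssim(1+t)^{-3}$ from Proposition \ref{prop_bur} produces precisely the $\|\nabla w\|_{L^\infty}/(1+t)^{\ell+1-d/2}$ and $\|w\|_{\dot H^{\ell-1}}/(1+t)^3$ error terms of the lemma. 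Similarly decomposing $w\cdot\nabla v = w/(1+t) + Kw/(1+t)^2$ supplies an additional $-\frac{1}{1+t}E_k$, with the $Kw$-correction controlled by \eqref{KP_ineq} and \eqref{K_est}; the transport commutator $[\Lambda^\ell,w]\cdot \nabla w$ is handled directly by \eqref{tech_1}, giving the $\|n\|_{L^\infty}^{k(1/\tilde\gamma-2)}\|\nabla w\|_{L^\infty}E_k$ term.

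Collecting the three sources of $(1+t)^{-1}$-coefficients yields a total coefficient on the LHS of
\[
\frac{1}{1+t}\Bigl(\ell+1-\frac{d(1-\alpha\tilde\gamma)}{2}\Bigr) = \frac{1}{1+t}\Bigl(s+1-\frac d2 + \frac{k\bigl(d(2-\gamma)-\sigma\bigr)}{2}\Bigr).
\]
Under the standing hypothesis $\gamma\leq 2-\sigma/d$, the bracketed correction is nonnegative, so the actual coefficient is at least $(s+1-d/2)/(1+t)$; since $E_k\geq 0$, replacing it by the smaller $\frac{s+1-d/2}{1+t}$ preserves the inequality and recovers the stated form. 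The main subtlety lies in organizing the cancellations cleanly across the weighted framework and in observing that the hypothesis on $\gamma$ is exactly what is needed to absorb the $(d(2-\gamma)-\sigma)$-discrepancy with a favorable sign — this is the mechanism by which the pressure weight $n^{k(1/\tilde\gamma-2)}$ tames the attractive super-Manev interaction in the iterative argument.
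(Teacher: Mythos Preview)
Your proposal is correct and follows essentially the same approach as the paper's proof in Appendix~D: splitting the time derivative into the contribution from $\partial_t(n^{k(1/\tilde\gamma-2)})$ and from $\Lambda^{s-k\sigma/2}\partial_t w$, cancelling the top-order transport pieces against the divergence term from the weight, extracting the $(1+t)^{-1}$ coefficients via \eqref{dv}--\eqref{divv} and the commutator expansion \eqref{tech_2}, and finally invoking $\gamma\le 2-\sigma/d$ to drop the nonnegative excess $k\bigl(\tfrac{d-\sigma}{2}-d\tilde\gamma\bigr)$. Your choice to first write the continuity equation for $n^\alpha$ in conservation form is a slightly cleaner way to see the cancellation than the paper's term-by-term bookkeeping ($\mathsf{K}_{11}+\mathsf{K}_{21}$, $\mathsf{K}_{12}+\mathsf{K}_{22}$), but the two arguments are equivalent.
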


Now, we are ready to show the proof of Proposition \ref{Hs_est2_case2}.
\begin{proof}[Proof of Proposition \ref{Hs_est2_case2}]
From Lemma \ref{pres_est1} with $\lambda=1$, let us recall that 
	\bq\label{wp_high_est0}
	\begin{aligned}
		\frac{1}{2} &\frac{d}{dt} \w{X}_s^2 +\frac{s-\frac{d}{2}+\min\{ d\tilde{\gamma}, 1\}}{1+t} \w{X}_s^2   \\
		&\lesssim
		(\|\nabla w\|_{L^\infty}+ \|\nabla n\|_{L^\infty}) \w{X}_s^2 
		+ \frac{\w{X}_s^2}{(1+t)^2} \\
		& \quad + \lt( \frac{\|n\|_{L^\infty}+\|w\|_{L^\infty}}{(1+t)^{2+s-\frac d2}}+ \frac{\|n\|_{\dot{H}^{s-1}} +\|w\|_{\dot{H}^{s-1}}}{(1+t)^3} + \frac{\|\nabla n\|_{L^\infty}+\|\nabla w\|_{L^\infty}}{(1+t)^{s+1-\frac d2}}\rt)\w{X}_s+ \mathcal{R}_s(0).
	\end{aligned}
	\eq
 Now we shall use the iteration argument based on Lemmas \ref{Gap} and \ref{higher_it}.
 First, we apply Lemma \ref{Gap} with $k=0$ and then Lemma \ref{higher_it} with $k=1$ so that
 	\[\begin{aligned}
 	\mathcal{R}_{s}(0)
 		&\leq C\|n\|_{L^{\infty}}^{(\frac{1}{\tilde{\gamma}}-2)} 
	\|\nabla n\|_{L^{\infty}} \|w\|_{\dot{H}^{s}} \|n\|_{\dot{H}^{s-\sigma}} + C\|n\|_{L^{\infty}}^{(\frac{1}{\tilde{\gamma}}-2)} 
	\|\nabla n\|_{L^{\infty}} \|w\|_{\dot{H}^{s-\frac{ \sigma}{2}}} \|n\|_{\dot{H}^{s-\frac{\sigma}{2}}}\\
	&\quad +C\|n\|_{L^\infty}^{(\frac{1}{\tilde\gamma}-2)} \|n\|_{\dot{H}^{s-\sigma}}^{\frac{\frac d2 +1-\sigma}{s-\sigma}} \|n\|_{L^2}^{1-\frac{\frac d2+1-\sigma}{s-\sigma}}\|w\|_{\dot{H}^{s}} \|n\|_{\dot{H}^{s}} + \frac{1}{\tilde{\gamma}^2} \mathcal{P}_s (1)\\
 	&\leq C\|n\|_{L^{\infty}}^{(\frac{1}{\tilde{\gamma}}-2)} 
	\|\nabla n\|_{L^{\infty}} \|w\|_{\dot{H}^{s}} \|n\|_{\dot{H}^{s-\sigma}}  + C\|n\|_{L^{\infty}}^{(\frac{1}{\tilde{\gamma}}-2)} 
	\|\nabla n\|_{L^{\infty}} \|w\|_{\dot{H}^{s-\frac{ \sigma}{2}}} \|n\|_{\dot{H}^{s-\frac{\sigma}{2}}}\\
	&\quad +C\|n\|_{L^\infty}^{(\frac{1}{\tilde\gamma}-2)} \|n\|_{\dot{H}^{s-\sigma}}^{\frac{\frac d2 +1-\sigma}{s-\sigma}} \|n\|_{L^2}^{1-\frac{\frac d2+1-\sigma}{s-\sigma}}\|w\|_{\dot{H}^{s}} \|n\|_{\dot{H}^{s}}  + C\|n\|_{L^\infty}^{(\frac{1}{\tilde\gamma}-2)}\|\nabla w\|_{L^\infty}\|w\|_{\dot{H}^{s-\frac\sigma2}}^2\\
	 &\quad +C\|n\|_{L^{\infty}}^{\frac{1}{\tilde{\gamma}}-2} \|w\|_{\dot{H}^{s-\frac{\sigma}{2} }} \lt( \frac{\|w\|_{\dot{H}^{s-\frac{\sigma}{2}}}}{(1+t)^2}+\frac{\|\nabla w\|_{L^\infty}}{(1+t)^{s-\frac\sigma2+1 -\frac d2}} +\frac{\|w\|_{\dot{H}^{s-\frac{ \sigma}{2}-1}}}{(1+t)^3} \rt) \\
 	&\quad
 	-\frac{1}{2\tilde{\gamma}^2} \frac{d}{dt} \intr n^{\frac{1}{\tilde{\gamma}}-2} |\Lambda^{s-\frac{\sigma}{2}} w|^2\, dx  -\frac{ s -\frac{d}{2}+1 }{\tilde{\gamma}^2 (1+t)}  
 	\intr n^{\frac{1}{\tilde{\gamma}}-2}  |\Lambda^{s-\frac{\sigma}{2}} w|^2\, dx\\
 	&\quad 		+ \frac{1}{\tilde{\gamma}^{2}} \mathcal{R}_s(1).
 \end{aligned}\]
We will repeat this process until the order of derivative on $n^{\frac{1}{\tilde\gamma}}$ in $\mathcal{R}_s$, which is $s-\frac{(k+2)\sigma}{2} +1$, becomes less than $s$. Thus, we let $k_0$ be the smallest integer satisfying  
\[
\frac{2(1-\sigma)}{\sigma} \le k_0 < \frac{2(s-1)}{\sigma} \quad \mbox{for } 3-\sigma<s. \]
Once we repeat the process up to $k_0$ times, we arrive at
 	\bq\label{Rs0_est}
	\begin{aligned}
		\mathcal{R}_{s}(0)
		&\leq C\|\nabla n\| \sum_{k=1}^{k_0}\|n\|_{L^{\infty}}^{k(\frac{1}{\tilde{\gamma}}-2)} \lt(
	\|w\|_{\dot{H}^{s-\frac{(k-1) \sigma}{2}}} \|n\|_{\dot{H}^{s-\frac{(k+1)\sigma}{2}}} +  \|w\|_{\dot{H}^{s-\frac{k \sigma}{2}}} \|n\|_{\dot{H}^{s-\frac{k\sigma}{2}}}\rt)\\
	&\quad + C\|n\|_{\dot{H}^{s-\sigma}}^{\frac{\frac d2 +1-\sigma}{s-\sigma}} \|n\|_{L^2}^{1-\frac{\frac d2+1-\sigma}{s-\sigma}} \sum_{k=1}^{k_0} \|n\|_{L^\infty}^{k\lt(\frac{1}{\tilde\gamma}-2\rt)} \|w\|_{\dot{H}^{s-\frac{(k-1)\sigma}{2}}} \|n\|_{\dot{H}^{s-\frac{(k-1)\sigma}{2}}}\\
	&\quad +C \sum_{k=1}^{k_0}\|n\|_{L^{\infty}}^{k(\frac{1}{\tilde{\gamma}}-2)} \|w\|_{\dot{H}^{s-\frac{k\sigma}{2} }} \lt( \frac{\|w\|_{\dot{H}^{s-\frac{k\sigma}{2}}}}{(1+t)^2}+\frac{\|\nabla w\|_{L^\infty}}{(1+t)^{s-\frac{k\sigma}{2}+1 - \frac d2}} +\frac{\|w\|_{\dot{H}^{s-\frac{k \sigma}{2}-1}}}{(1+t)^3} \rt) \\
		&\quad 
		-\sum_{k=1}^{k_0}\Bigg( \frac{1}{2\tilde{\gamma}^{2k}} \frac{d}{dt} \intr n^{k(\frac{1}{\tilde{\gamma}}-2)} |\Lambda^{s-\frac{k\sigma}{2}} w|^2\, dx 
		+ \frac{s-\frac{d}{2} +1}{\tilde{\gamma}^{2k}(1+t)} 
		\intr  n^{k(\frac{1}{\tilde{\gamma}}-2)}  |\Lambda^{s-\frac{k\sigma}{2}} w|^2\, dx \Bigg)\\ 
		 &\quad  + \frac{1}{\tilde{\gamma}^{2k_0}} \mathcal{R}_s(k_0).
	\end{aligned}
	\eq
Since $s -\frac{(k_0+2)\sigma}{2} +1 \le s$, we now use \eqref{tech_1} and Lemma \ref{tech_4} to get
	\bq\label{Rsk_0_est}
	\begin{aligned}
		\mathcal{R}_{s}(k_0) 
		&=  \frac{1}{\tilde{\gamma}}\intr n^{k_0(\frac{1}{\tilde{\gamma}}-2)} \Lambda^{s-\frac{k_0 \sigma}{2}}w \cdot \Lambda^{s -\frac{(k_0+2) \sigma}{2}}(n^{\frac{1}{\tilde{\gamma}}-1} \nabla n) \, dx\\
		&= \frac{1}{\tilde{\gamma}} \intr n^{k_0(\frac{1}{\tilde{\gamma}}-2)} \Lambda^{s-\frac{k_0 \sigma}{2}} w \cdot [\Lambda^{s-\frac{(k_0+2) \sigma}{2}}, n^{\frac{1}{\tilde{\gamma}}-1}] \nabla n\, dx \\
		&\quad+ \frac{1}{\tilde{\gamma}} \intr n^{k_0(\frac{1}{\tilde{\gamma}}-2)+\frac{1}{\tilde{\gamma}}-1}  \Lambda^{s-\frac{k_0 \sigma}{2}} w \cdot \Lambda^{s-\frac{(k_0+2) \sigma}{2}} \nabla n\, dx\\
		&\lesssim \|n\|_{L^\infty}^{k_0(\frac{1}{\tilde\gamma}-2)}\|w\|_{\dot{H}^{s-\frac{k_0\sigma}{2}}} \lt(\|n^{\frac{1}{\tilde\gamma}}\|_{\dot{H}^{s-\frac{(k_0+2)\sigma}{2}}} \|\nabla n\|_{L^\infty} + \|\nabla (n^{\frac{1}{\tilde\gamma}})\|_{L^\infty}\|n\|_{\dot{H}^{s-\frac{(k_0+2)\sigma}{2}}} \rt)\\
		&\quad + \|n\|_{L^\infty}^{(k_0+1)(\frac{1}{\tilde\gamma}-2)+1}\|w\|_{\dot{H}^{s-\frac{k_0\sigma}{2}}}\|n\|_{\dot{H}^{s+1-\frac{(k_0+2)\sigma}{2}}}\\
		&\lesssim \|n\|_{L^\infty}^{(k_0+1)(\frac{1}{\tilde\gamma}-2)}\|\nabla n\|_{L^\infty}\|w\|_{\dot{H}^{s-\frac{k_0\sigma}{2}}}  \|n\|_{\dot{H}^{s-\frac{(k_0+2)\sigma}{2}}}\\
		&\quad +\|n\|_{L^\infty}^{(k_0+1)(\frac{1}{\tilde\gamma}-2)+1}\|w\|_{\dot{H}^{s-\frac{k_0\sigma}{2}}}\|n\|_{\dot{H}^{s+1-\frac{(k_0+2)\sigma}{2}}}.
	\end{aligned}
	\eq
Therefore, we combine \eqref{wp_high_est0}, \eqref{Rs0_est}, and \eqref{Rsk_0_est} to get the desired result.
\end{proof}

Following the notations in \eqref{temp_notation}-\eqref{temp_notation_2}, we combine Lemma \ref{pres_est1} with Proposition \ref{Hs_est2_case2} to deduce the following a priori estimates.  
\begin{proposition}\label{decay_temp2_case3}
	Let the assumptions of Theorem \ref{main_thm4} be satisfied.
	For $T>0$, suppose that $(n,w)$ is a smooth solution to \eqref{pER} with $\lambda=1$ on the time interval $[0,T]$ decaying sufficiently fast at infinity.
	Then we have
\[
\begin{aligned}
	\frac{1}{2} &\frac{d}{dt} (\w{Z}^2(t)+\sum_{k=1}^{k_0} \w{W}_k(t) ) 
	+\frac{\w{C}_{d, \sigma}}{1+t}\lt( \w{Z}^2(t)   
	+ \sum_{k=1}^{k_0}  \w{W}_k(t)\rt)\\
	&\lesssim
	\w{Z}^3 (t)
	+ \frac{\w{Z}^2(t)}{(1+t)^2}  + \sum_{k=1}^{k_0}  \frac{ \w{Z}^{2+k(\frac{1}{\tilde{\gamma}}-2)}(t) }{(1+t)^{2+k(2-\sigma-\frac{1}{\tilde{\gamma}})}}  
	+  \sum_{k=1}^{k_0+1}  \frac{\w{Z}^{3+k(\frac{1}{\tilde{\gamma}}-2)}(t) }{(1+t)^{k(2-\sigma-\frac{1}{\tilde{\gamma}})}}
\end{aligned}
\]
with $\w{C}_{d, \sigma}=1+\min\{1,\, d\tilde{\gamma}\}$,
and $k_0$ given as in Proposition \ref{Hs_est2_case2}.
Here,
	\[
	\w{W}_k(t):= \frac{(1+t)^{2(s-\frac{d}{2}-1)}}{\tilde{\gamma}^{2k}} \intr n^{k(\frac{1}{\tilde{\gamma}}-2)} |\Lambda^{s-\frac{k\sigma}{2}} w|^2\, dx>0.
	\]
\end{proposition}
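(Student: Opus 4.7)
The plan is to feed the conclusion of Proposition \ref{Hs_est2_case2} (the highest-order identity with the $\w{X}_s^2$-part together with the iterated weighted terms $\int n^{k(1/\tilde\gamma-2)}|\Lambda^{s-k\sigma/2}w|^2\,dx$) into the time-scaled framework used earlier in Proposition \ref{decay_temp2_case2}, and to combine it with the lowest-order estimates from Lemma \ref{pres_est1}. More concretely, set
\[
\mathcal{V}^2(t) := \w{X}_s^2 + \sum_{k=1}^{k_0}\frac{1}{\tilde\gamma^{2k}}\intr n^{k(\frac{1}{\tilde\gamma}-2)}|\Lambda^{s-\frac{k\sigma}{2}}w|^2\,dx,
\]
so that, after multiplication by the weight $(1+t)^{2(s-\frac{d}{2}-1)}$, this quantity becomes $\w{Y}_s^2(t)+\sum_{k=1}^{k_0}\w{W}_k(t)$. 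Differentiating in $t$ the weight together with Proposition \ref{Hs_est2_case2} yields a prefactor
\[
-\frac{2(1+\min\{d\tilde\gamma,1\})}{1+t}\bigl(\w{Y}_s^2+\textstyle\sum \w{W}_k\bigr)=-\frac{2\w{C}_{d,\sigma}}{1+t}\bigl(\w{Y}_s^2+\textstyle\sum \w{W}_k\bigr),
\]
which provides the sharp dissipative rate $\w{C}_{d,\sigma}$ announced in the statement.

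Next, I would treat the $L^2$-level contribution. Lemma \ref{pres_est1} gives at once
\[
\frac{d}{dt}\w{n}_2 + \frac{1+d\tilde\gamma}{1+t}\w{n}_2 \ls \w{w}_{1,\infty}\w{n}_2 + \frac{\w{n}_2}{(1+t)^2},
\]
and an analogous inequality for $\w{w}_2$ containing the extra term $(1+t)^{-\frac{d}{2}-1}\|\nabla\Lambda^{-\sigma}(n^{1/\tilde\gamma})\|_{L^2}$. To control the latter I would apply Lemma \ref{tech_4} together with the Hardy--Littlewood--Sobolev inequality to obtain $\|\nabla\Lambda^{-\sigma}(n^{1/\tilde\gamma})\|_{L^2}\ls \|n\|_{L^\infty}^{\frac{1}{\tilde\gamma}-1}\|n\|_{\dot H^{1-\sigma}}$, and then translate both factors into the scaled norms $\w{n}_\infty$ and $\w{n}_{1-\sigma,2}$. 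Accounting for the powers of $(1+t)$ produces a contribution of the form $(1+t)^{-(2-\sigma-\frac{1}{\tilde\gamma})}\w{Z}^{\frac{1}{\tilde\gamma}}$, which is exactly the $k=1$ instance appearing in the last sum on the right-hand side of the asserted inequality.

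For the highest-order part I would dispose of each error term in the conclusion of Proposition \ref{Hs_est2_case2} by systematically rewriting $\|n\|_{L^\infty}^{k(1/\tilde\gamma-2)}$, $\|w\|_{\dot H^{s-k\sigma/2}}$, $\|n\|_{\dot H^{s-k\sigma/2}}$, $\|\nabla n\|_{L^\infty}$, etc.\ in terms of the time-scaled quantities of \eqref{temp_notation}. The Gagliardo--Nirenberg interpolations summarised in Lemma \ref{bdds2}, extended to the relevant intermediate Sobolev orders, then bound each such quantity by $\w{Z}(t)$. The bookkeeping is delicate: each level of the iteration contributes, on the one hand, cubic interaction terms $\w{Z}^3$ decaying as $(1+t)^{0}$ (absorbed into the first right-hand side term) and $\w{Z}^2/(1+t)^2$ terms, and on the other hand mixed terms whose number of factors $\w{n}_\infty^{1/\tilde\gamma-2}$ coincides with the power of $(1+t)^{-(2-\sigma-1/\tilde\gamma)}$, producing the two remaining parametric sums. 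The tail term $\mathcal{R}_s(k_0)$, which can no longer be iterated because $s-\frac{(k_0+2)\sigma}{2}+1\le s$, furnishes the single extra summand with exponent $k_0+1$ in the last sum, explaining the upper index $k_0+1$ there while the $\w{W}_k$ only run up to $k_0$.

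The main obstacle I foresee is precisely this accounting: matching the exponents of $\w{Z}$ with the exponents of $(1+t)$ uniformly in $k\in\{1,\ldots,k_0\}$, while ensuring that every Sobolev exponent appearing after interpolation (e.g.\ $s-\frac{(k\pm1)\sigma}{2}$, $s-\frac{(k+2)\sigma}{2}$, $s+1-\frac{(k_0+2)\sigma}{2}$, and the Gagliardo--Nirenberg exponent $\frac{d/2+1-\sigma}{s-\sigma}$) lies in the admissible range. Once this calibration is complete one simply adds the $L^2$ estimate for $\w{n}_2+\w{w}_2$ to the weighted highest-order inequality and recalls that $\w{W}_k\ge 0$, which yields the stated closed differential inequality.
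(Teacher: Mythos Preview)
Your proposal is correct and follows essentially the same route as the paper: translate the highest-order inequality from Proposition~\ref{Hs_est2_case2} into the time-scaled variables $\w{Y}_s$ and $\w{W}_k$ via the weight $(1+t)^{2(s-\frac d2-1)}$, add the $L^2$-level estimates from Lemma~\ref{pres_est1}, and reduce every right-hand side term to powers of $\w{Z}$ through Lemma~\ref{bdds2}. One small remark: for $\sigma\in(0,1)$ the estimate $\|\nabla\Lambda^{-\sigma}(n^{1/\tilde\gamma})\|_{L^2}\ls\|n\|_{L^\infty}^{1/\tilde\gamma-1}\|n\|_{\dot H^{1-\sigma}}$ follows directly from Lemma~\ref{tech_4} since $1-\sigma>0$, so the Hardy--Littlewood--Sobolev inequality is not actually needed here.
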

\begin{proof}
	From Lemma \ref{pres_est1},  it is again clear that
	\bq\label{case3_lower_est}
\frac{d}{dt}\lt(\w{n}_2 + \w{w}_2 \rt) + \frac{C_{d,\sigma}}{1+t} \lt(\w{n}_2 + \w{w}_2\rt) \lesssim \tilde{Z}^2(t) + \frac{\tilde{Z}^{\frac{1}{\tilde\gamma}}}{(1+t)^{2-\sigma-\frac{1}{\tilde\gamma}}}.
\eq

For the highest-order estimate, we first note that Lemma \ref{bdds2} still holds. Then we can rewrite the results in Lemma \ref{Hs_est2_case2} as
\bq \label{Y_s}
\begin{aligned}
&	\frac{1}{2} \frac{d}{dt} \lt(\w{Y}_s^2 + \sum_{k=1}^{k_0} \frac{(1+t)^{2(s-\frac{d}{2}-1)}}{\tilde{\gamma}^{2k}} \intr n^{k(\frac{1}{\tilde{\gamma}}-2)} |\Lambda^{s-\frac{k\sigma}{2}} w|^2\, dx\rt)\\ 
&\quad +\frac{1+\min\{ d\tilde{\gamma}, 1\}}{1+t} \w{Y}_s^2   + \sum_{k=1}^{k_0}\frac{2(1+t)^{2(s-\frac{d}{2}-1)}  }{\tilde{\gamma}^{2k}(1+t)}  
\intr  n^{k(\frac{1}{\tilde{\gamma}}-2)}  |\Lambda^{s-\frac{k\sigma}{2}} w|^2\, dx \\
&\qquad \lesssim
(\w{w}_{1, \infty}+ \w{n}_{1, \infty} ) \w{Y}_s^2 
+ \frac{\w{Y}_s^2}{(1+t)^2} + \lt( \frac{  \w{n}_{\infty} +\w{w}_{\infty}+  \w{n}_{s-1, 2}+\w{w}_{s-1, 2} + \w{n}_{1, \infty} +\w{w}_{1, \infty}}{(1+t)^2}\rt)\w{Y}_s\\
&\qquad \quad +\w{n}_{1,\infty} \sum_{k=1}^{k_0}\frac{\w{n}_\infty^{k(\frac{1}{\tilde\gamma}-2)}\lt( \w{w}_{s-\frac{(k-1)\sigma}{2},2}\w{n}_{s-\frac{(k+1)\sigma}{2},2} + \w{w}_{s-\frac{k\sigma}{2},2}\w{n}_{s-\frac{k\sigma}{2},2}\rt)}{(1+t)^{k(2-\sigma-\frac{1}{\tilde\gamma})}}\\
&\qquad \quad  + \w{n}_{s-\sigma,2}^{\frac{\frac d2 +1-\sigma}{s-\sigma}} \w{n}_2^{1-\frac{\frac d2 +1-\sigma}{s-\sigma}} \sum_{k=1}^{k_0}\frac{\w{n}_\infty^{k(\frac{1}{\tilde\gamma}-2)}\w{w}_{s-\frac{(k-1)\sigma}{2},2}\ \w{n}_{s-\frac{(k-1)\sigma}{2},2}}{(1+t)^{k(2-\sigma-\frac{1}{\tilde\gamma})}}\\
&\qquad \quad + \sum_{k=1}^{k_0} \frac{\w{n}_\infty^{k(\frac{1}{\tilde\gamma}-2)} \w{w}_{s-\frac{k\sigma}{2},2} \lt(\w{w}_{s-\frac{k\sigma}{2},2} + \w{w}_{1,\infty} + \w{w}_{s-\frac{k\sigma}{2}-1,2 } \rt) }{(1+t)^{2+k(2-\sigma-\frac{1}{\tilde\gamma})}}\\
&\qquad \quad + \frac{\w{n}_\infty^{(k_0+1)(\frac{1}{\tilde\gamma}-2)} \w{n}_{1,\infty} \w{w}_{s-\frac{k_0\sigma}{2},2} \ \w{n}_{s-\frac{(k_0+2)\sigma}{2},2 } }{(1+t)^{(k_0+1)(2-\sigma-\frac{1}{\tilde\gamma})}}
	 +\frac{\w{n}_\infty^{(k_0+1)(\frac{1}{\tilde\gamma}-2)+1} \w{w}_{s-\frac{k_0\sigma}{2},2} \ \w{n}_{s+1-\frac{(k_0+2)\sigma}{2},2 } }{(1+t)^{(k_0+1)(2-\sigma-\frac{1}{\tilde\gamma})}}.
\end{aligned}
\eq
Then, it follows from \eqref{case3_lower_est} and \eqref{Y_s} that

\[
\begin{aligned}
	\frac{1}{2} &\frac{d}{dt} (\w{Z}^2+\sum_{k=1}^{k_0} \w{W}_k ) 
	+\frac{1+\min\{ d\tilde{\gamma}, 1\}}{1+t}\lt( \w{Z}^2   
	+ \sum_{k=1}^{k_0}  \w{W}_k\rt)\\
	&\lesssim
	\w{Z}^3 
	+ \frac{\w{Z}^2}{(1+t)^2}  + \sum_{k=1}^{k_0}  \frac{ \w{Z}^{2+k(\frac{1}{\tilde{\gamma}}-2)} }{(1+t)^{2+k(2-\sigma-\frac{1}{\tilde{\gamma}})}}  
	+  \sum_{k=1}^{k_0+1}  \frac{\w{Z}^{3+k(\frac{1}{\tilde{\gamma}}-2)} }{(1+t)^{k(2-\sigma-\frac{1}{\tilde{\gamma}})}}.
\end{aligned}
\]
This concludes the desired estimate.
\end{proof}

\begin{proof}[Proof of Theorem \ref{main_thm4}] Similarly to previous theorems, we split the proof into two parts.

\vspace{.2cm}
	
	\noindent $\bullet$ (Global-in-time existence and temporal decay): For the local-in-time existence, we also proceed as in Section \ref{sec:3}. For the global existence, since $\w{Z}(t), \w{W}_k(t)\ge0$ for any $t\in [0, T]$ and $k\in [1, k_0]$, 
by Proposition \ref{decay_temp2_case3} we deduce
			\[
	\begin{aligned}
		\frac{d}{dt} \w{\mathcal{Z}}(t)
		+\frac{\w{C}_{d, \sigma}}{1+t} \w{\mathcal{Z}}(t)    \leq C_2 \lt(
		\w{\mathcal{Z}}^2(t)
		+ \frac{\w{\mathcal{Z}}(t)}{(1+t)^2}  + \sum_{k=1}^{k_0}  \frac{ \w{\mathcal{Z}}(t)^{1+k(\frac{1}{\tilde{\gamma}}-2)} }{(1+t)^{2+k(2-\sigma-\frac{1}{\tilde{\gamma}})}}  
		+  \sum_{k=1}^{k_0+1}  \frac{\w{\mathcal{Z}}^{2+k(\frac{1}{\tilde{\gamma}}-2)} }{(1+t)^{k(2-\sigma-\frac{1}{\tilde{\gamma}})}} \rt),
	\end{aligned} 
	\]
where $\w{\mathcal{Z}}(t):=(\w{Z}^2(t)+\sum_{k=1}^{k_0} \w{W}_k(t))^{\frac12} $.
Here $C_2>0$ is independent of $t$.
	If 
	 $\w{Z}(0) \leq \varepsilon$ for sufficient small $\varepsilon>0$,
	 then 
	 \[
	 \w{\mathcal{Z}}(0) = (\w{Z}^2(0)+\sum_{k=1}^{k_0} \w{W}_k(0))^{\frac{1}{2}}  \leq
	 \w{Z}(0)+\sum_{k=1}^{k_0} \w{W}_k^{\frac{1}{2}}(0) \leq  (k_0+1) \w{Z}(0) \leq (k_0+1)\varepsilon
	 \]
	 since 
	 \[
	 \begin{aligned}
	 	\sum_{k=1}^{k_0}\w{W}_k(0) 
	 	&=\sum_{k=1}^{k_0}\frac{1}{\tilde{\gamma}^{2k}} \intr n_0^{k(\frac{1}{\tilde{\gamma}}-2)} |\Lambda^{s-\frac{k\sigma}{2}} w_0|^2\, dx \\
	 	&\lesssim \sum_{k=1}^{k_0} \|n_0\|_{L^{\infty}}^{k(\frac{1}{\tilde{\gamma}}-2)} \|w_0\|_{\dot{H}^{s-\frac{k\sigma}{2}}}^2\\
	 	&\lesssim \sum_{k=1}^{k_0} \|n_0\|_{L^{\infty}}^{k(\frac{1}{\tilde{\gamma}}-2)} \|w_0\|_{\dot{H}^{1}}^{2-\frac{2(s-1-\frac{k\sigma}{2})}{s-1}}\|w_0\|_{\dot{H}^{s}}^{\frac{2(s-1-\frac{k\sigma}{2})}{s-1}} \\
	 	&\lesssim \sum_{k=1}^{k_0} \w{Z}^{2+k(\frac{1}{\tilde{\gamma}}-2)}(0)\lesssim \w{Z}^2(0)
	 \end{aligned}
	 \]
	 for $k\leq k_0 <\frac{2(s-1)}{\sigma}$.
	Then we use Lemma \ref{tech_6} with $\widetilde{Z}(t)$ replaced by $\w{\mathcal{Z}}(t)$ and $a=\w{C}_{d, \sigma}$ to get
	\[
	\w{Z}(t) \le \w{\mathcal{Z}}(t)  \le\frac{2e^{\frac{C_2t}{1+t}}}{(1+t)^a} \w{\mathcal{Z}}(0)\leq \frac{2(k_0+1)e^{\frac{C_2t}{1+t}}}{(1+t)^a} \w{Z}(0) \quad \forall\, t\ge 0
	\]
if
\[
k\sigma < \lt( 1+k\lt(\frac{1}{\tilde{\gamma}}-2\rt)\rt) \min\{1,\,d\tilde{\gamma}\},
\]
or equivalently
\bq\label{c}
\gamma <  \min\lt\{1+\frac{2}{\sigma+2-\frac1k},\, 1+\frac{2(d-\sigma)}{d(2-\frac1k)}\rt\}
\eq
for every $k=1, \dots, k_0$. 
From the condition $\gamma\leq 2-\frac\sigma d$, we see that the second one on the right-hand side of \eqref{c} is always true, and the first one is also removed for any $k$ if $d=1,2$.
Thus \eqref{c} holds whenever
\[
\gamma < 1+ \frac{2}{\sigma+2}\qquad \text {if}\quad  d\ge 3
\]
under $\gamma \leq 2-\frac\sigma d$. Hence, we have the desired global existence and temporal decay estimates.

\vspace{.2cm}

\noindent $\bullet$ (Uniqueness):  We again show the stability in $L^2$-Sobolev spaces. For $i=1,2$, let $(n_i, w_i)$ be the regular solutions to \eqref{pER} with $\lambda =1$ on time interval $[0,T]$ corresponding to the initial data $(n_{i,0}, w_{i,0})$. Then we proceed as the repulsive case to arrive at the following $L^2$-estimate:
\bq\label{low_diff_est3}
\frac{d}{dt}\lt(\|n_1 - n_2\|_{L^2}^2 + \|w_1-w_2\|_{L^2}^2 \rt) \le C\|w_1-w_2\|_{L^2}^2 + C\|n_1 - n_2\|_{H^1}^2.
\eq
Moreover, we get
\bq\label{nw_comb_est2}
\begin{aligned}
\frac{d}{dt}\lt(\|n_1 -n_2\|_{\dot{H}^{2-\frac\sigma2}}^2 + \|w_1 -w_2\|_{\dot{H}^{2-\frac\sigma2}}^2 \rt) 
&\le C\lt(\|n_1 -n_2\|_{H^{2-\frac\sigma2}}^2 + \|w_1 -w_2\|_{H^{2-\frac\sigma2}}^2 \rt)\\
&\quad +  \intr \Lambda^{2-\frac\sigma2}(w_1 -w_2) \cdot \nabla\Lambda^{2-\frac{3\sigma}{2}} ( (n_1)^{\frac{1}{\tilde\gamma}} - (n_2)^{\frac{1}{\tilde\gamma}})\,dx
\end{aligned}
\eq
from the similar argument for \eqref{nw_comb_est1}, we use the following replacement of \eqref{diff_est_core2}:
\bq\label{diff_est_core4}
\begin{aligned}
	\|\Lambda^{2-\frac\sigma2}&[ (w_1 - w_2)\cdot \nabla (w_2 + v)]\|_{L^2} \\
	&\lesssim \|\Lambda^{2-\frac\sigma2}(w_1-w_2)\|_{L^2}\|\nabla (w_2 + v)\|_{L^\infty} \\
	&\quad + \lt\{\begin{array}{lcl}\|w_1 -w_2\|_{L^\infty}\|\Lambda^{2-\frac\sigma2}\nabla(w_2 +v)\|_{L^2} &\mbox{if} & d\le 3\\
		\|w_1 -w_2\|_{L^{\frac{1}{\frac{1}{2}-\frac{2-\frac{\sigma}{2}}{d}} }}\|\Lambda^{2-\frac\sigma2}\nabla(w_2 + v)\|_{L^{\frac{d}{2-\frac{\sigma}{2}} }} &\mbox{if} & d\ge 4\end{array}\rt.\\
	&\le C\|w_1 -w_2\|_{H^{2-\frac{\sigma}{2}}}.
\end{aligned}
\eq
Here, we used
$H^{2-\frac{\sigma}{2}} \hookrightarrow L^{\infty}$ when $d\le 3$
and $$\dot{H}^{2-\frac{\sigma}{2}} \hookrightarrow L^{\frac{1}{\frac{1}{2}-\frac{2-\frac{\sigma}{2}}{d}}},  \qquad  \dot{H}^{\frac{d}{2}+1}\cap \dot{H}^2 \hookrightarrow\dot{W}^{3-\frac{\sigma}{2}, \frac{d}{2-\frac{\sigma}{2}}} \qquad \text{when }d\ge 4.$$ 

To handle the last term in \eqref{nw_comb_est2}, we proceed similarly to that of Proposition \ref{Hs_est2_case2}. For $k \in \N\cup\{0\}$, we introduce
\[
\tilde{\mathcal{R}}(k) := \intr (n_1)^{(k+1)\lt(\frac{1}{\tilde\gamma}-2\rt)+1} \Lambda^{2-\frac{(k+1)\sigma}{2}}(w_1-w_2)\cdot \nabla \Lambda^{2-\frac{(k+3)\sigma}{2}}(n_1 - n_2)\,dx.
\]
From \eqref{diff_power}, we employ the assertion ($v$) of Lemma \ref{tech} to get
\[\begin{aligned}
	\intr& \Lambda^{2-\frac\sigma2}(w_1 -w_2) \cdot \nabla\Lambda^{2-\frac{3\sigma}{2}} ( (n_1)^{\frac{1}{\tilde\gamma}} - (n_2)^{\frac{1}{\tilde\gamma}})\,dx\\
	&= \frac{1}{\tilde\gamma}\intr \Lambda^{2-\frac\sigma2}(w_1 -w_2) \cdot ([\Lambda^{2-\frac{3\sigma}{2}}, (n_1)^{\frac{1}{\tilde\gamma}-1}] \nabla(n_1-n_2))\,dx\\
	&\quad +\frac{1}{\tilde\gamma}\intr (n_1)^{\frac{1}{\tilde\gamma}-1} \Lambda^{2-\frac\sigma2}(w_1 -w_2) \cdot   \nabla\Lambda^{2-\frac{3\sigma}{2}}  (n_1-n_2)\,dx\\
	&\quad + \frac{1}{\tilde\gamma}\intr \Lambda^{2-\frac\sigma2}(w_1 -w_2) \cdot \Lambda^{2-\frac{3\sigma}{2}} \lt[((n_1)^{\frac{1}{\tilde\gamma}-1} - (n_2)^{\frac{1}{\tilde\gamma}-1})\nabla n_2\rt] \,dx\\
	&\le  C\|w_1 -w_2\|_{H^{2-\frac\sigma2}}\lt( \|\Lambda (n_1^{\frac{1}{\tilde\gamma}-1})\|_{L^\infty}\|n_1 - n_2\|_{H^{2-\frac\sigma2}} + \|\Lambda^{2-\frac{3\sigma}{2}}(n_1^{\frac{1}{\tilde\gamma}-1})\nabla (n_1 - n_2)\|_{L^2}\rt)+\frac{1}{\tilde\gamma}\tilde{\mathcal R}(0) \\
	&\quad+ C\|w_1 -w_2\|_{H^{2-\frac\sigma2}} \lt\| ((n_1)^{\frac{1}{\tilde\gamma}-1} - (n_2)^{\frac{1}{\tilde\gamma}-1})\nabla n_2 \rt\|_{\dot{H}^{2-\frac{3\sigma}{2}}}\\
	&\le C\|w_1 -w_2\|_{H^{2-\frac\sigma2}}\|n_1 - n_2\|_{H^{2-\frac\sigma2}}+\frac{1}{\tilde\gamma}\tilde{\mathcal R}(0) \\
	&\quad+ C\|w_1 -w_2\|_{H^{2-\frac\sigma2}} \lt\| ((n_1)^{\frac{1}{\tilde\gamma}-1} - (n_2)^{\frac{1}{\tilde\gamma}-1})\nabla n_2 \rt\|_{\dot{H}^{2-\frac{3\sigma}{2}}},
\end{aligned}\]
where we used Lemma \ref{tech_4} to get
\[\begin{aligned}
 \|\Lambda^{2-\frac{3\sigma}{2}}(n_1^{\frac{1}{\tilde\gamma}-1})\nabla (n_1 - n_2)\|_{L^2}& \lesssim \lt\{\begin{array}{lcl}\|\Lambda^{2-\frac{3\sigma}{2}}(n_1^{\frac{1}{\tilde\gamma}-1})\|_{L^{\frac{2d}{\sigma}}}\|\nabla(n_1 -n_2)\|_{L^{\frac{1}{\frac12 - \frac{\sigma}{2d}}}} &\mbox{if} & d= 1\\
		\|\Lambda^{2-\frac{3\sigma}{2}}(n_1^{\frac{1}{\tilde\gamma}-1})\|_{L^{\frac{d}{1-\frac\sigma2}}}\|\nabla(n_1 -n_2)\|_{L^{\frac{1}{\frac12 - \frac{1-\frac\sigma2}{d}}}} &\mbox{if} & d\ge 2
		\end{array}\rt.\\
	&\le C\|n_1 -n_2\|_{H^{2-\frac{\sigma}{2}}},
\end{aligned}\]
To estimate the second term in the last line above, we first employ the similar argument used in \eqref{diff_est_core4} to find
\bq\label{diff_est_core5}
\begin{aligned}
	\|(n_1 -n_2)\nabla n_2\|_{\dot{H}^{2-\frac{3\sigma}{2}}} &\lesssim   \|n_1 -n_2\|_{\dot{H}^{2-\frac{3\sigma}{2}}} \| \nabla n_2\|_{L^\infty}  \\ &\quad +\lt\{\begin{array}{lcl} \|n_1 - n_2\|_{L^\infty} \| \Lambda^{2-\frac{3\sigma}{2}} \nabla n_2 \|_{L^2} &\mbox{if}& d \le 3\\
		\|n_1 - n_2\|_{L^{\frac{1}{\frac12 - \frac{2-\frac\sigma2}{d}}}} \| \Lambda^{2-\frac{3\sigma}{2}}\nabla n_2\|_{L^{\frac{d}{2-\frac\sigma2}}}&\mbox{if} & d \ge 4\end{array}\rt. \\
	& \le C \|n_1-n_2\|_{H^{2-\frac{\sigma}{2}}},
\end{aligned}
\eq
where we used $n_2 \in H^s$ with $s>\max\{3-\frac{\sigma}{2}, 1+\frac{d}{2}\}$ and $\dot{H}^{\frac{d}{2}+1-\sigma} \hookrightarrow \dot{W}^{3-\frac{3\sigma}{2}, \frac{d}{2-\frac{\sigma}{2}}}$ when $d\ge 4$.

In a similar way, we use \eqref{power_n} and the Minkowski inequality to get
\[\begin{aligned}
&\| ((n_1)^{\frac{1}{\tilde\gamma}-1} - (n_2)^{\frac{1}{\tilde\gamma}-1})\nabla n_2 \|_{\dot{H}^{2-\frac{3\sigma}{2}}}\\
&\qquad \le C\int_0^1  \| (n_1 - n_2) \nabla n_2 \tilde{F}_{\tilde{\gamma}}(n_1, n_2) \|_{\dot{H}^{2-\frac{3\sigma}{2}}}\,d\tau\\
&\qquad \le C \int_0^1 \|(n_1 -n_2)\nabla n_2\|_{\dot{H}^{2-\frac{3\sigma}{2}}} \| \tilde{F}_{\tilde{\gamma}}(n_1, n_2)\,\|_{L^\infty} d\tau  \\
&\qquad \quad 
+\lt\{\begin{array}{lcl}
	\int_0^1 \|n_1 - n_2\|_{L^\infty} \|\nabla n_2\|_{L^{\infty}} \|  \tilde{F}_{\tilde{\gamma}}(n_1, n_2) \|_{\dot{H}^{2-\frac{3\sigma}{2}}}\,d\tau  &\mbox{if}& d \le 3\\
 \int_0^1\|n_1 - n_2\|_{L^{\frac{1}{\frac12 - \frac{2-\frac\sigma2}{d}}}}\|\nabla n_2\|_{L^{\infty}}  \| \Lambda^{2-\frac{3\sigma}{2}} \tilde{F}_{\tilde{\gamma}}(n_1, n_2) \|_{L^{\frac{d}{2-\frac\sigma2}}}\,d\tau &\mbox{if} & d \ge 4,
 \end{array}\rt. \\
\end{aligned}\]
where $\tilde{F}_{\tilde{\gamma}}(n_1, n_2) :=(n_2 + (n_1 - n_2)\tau)^{\frac{1}{\tilde\gamma}-2} $.\\
 
\noindent It is easy to check that $\tilde{F}_{\tilde{\gamma}}(n_1,n_2) \in L^{\infty} \cap \dot{H}^{2-\frac{3\sigma}{2}} \cap \dot{H}^{\frac{d}{2}-\sigma}$ by Lemma \ref{tech_4} under ($\mathcal{A}3$),
and thus it follows from \eqref{diff_est_core5} that
\[
\| ((n_1)^{\frac{1}{\tilde\gamma}-1} - (n_2)^{\frac{1}{\tilde\gamma}-1})\nabla n_2 \|_{\dot{H}^{2-\frac{3\sigma}{2}}} \lesssim \|n_1 -n_2\|_{H^{2-\frac{\sigma}{2}}}.
\]
Thus \eqref{nw_comb_est2} leads to
\bq\label{nw_comb_est3}
\begin{aligned}
\frac{d}{dt}\lt(\|n_1 -n_2\|_{\dot{H}^{2-\frac\sigma2}}^2 + \|w_1 -w_2\|_{\dot{H}^{2-\frac\sigma2}}^2 \rt)  \le C\lt(\|n_1 -n_2\|_{H^{2-\frac\sigma2}}^2 + \|w_1 -w_2\|_{H^{2-\frac\sigma2}}^2 \rt) + \frac{1}{\tilde\gamma}\tilde{\mathcal{R}}(0).\\
\end{aligned}\eq

Now, we proceed to the iteration argument; for $k \ge 1$, we estimate
\[\begin{aligned}
\frac12&\frac{d}{dt}\intr (n_1)^{k\lt(\frac{1}{\tilde\gamma}-2\rt)} |\Lambda^{2-\frac{(k+1)\sigma}{2}}(w_1 -w_2)|^2\,dx \\
& = -\frac 12\intr ((w_1 +v)\cdot \nabla (n_1)^{k\lt(\frac{1}{\tilde\gamma}-2\rt)}|\Lambda^{2-\frac{(k+1)\sigma}{2}}(w_1 -w_2)|^2\,dx\\
&\quad -\frac {k(1 -2\tilde{\gamma} )}{2}\intr  (n_1)^{k\lt(\frac{1}{\tilde\gamma}-2\rt)} \cdot \nabla(w_2 + v)) |\Lambda^{2-\frac{(k+1)\sigma}{2}}(w_1 -w_2)|^2\,dx\\
&\quad - \intr (n_1)^{k\lt(\frac{1}{\tilde\gamma}-2\rt)} \Lambda^{2-\frac{(k+1)\sigma}{2}}(w_1 -w_2) \cdot \Lambda^{2-\frac{(k+1)\sigma}{2}}\lt((w_1 + v) \cdot \nabla (w_1 - w_2) \rt)\,dx\\
&\quad - \intr (n_1)^{k\lt(\frac{1}{\tilde\gamma}-2\rt)} \Lambda^{2-\frac{(k+1)\sigma}{2}}(w_1 -w_2) \cdot \Lambda^{2-\frac{(k+1)\sigma}{2}}\lt( (w_1 -w_2) \cdot \nabla (w_2+ v) \rt)\,dx\\
&\quad - \tilde\gamma \intr (n_1)^{k\lt(\frac{1}{\tilde\gamma}-2\rt)} \Lambda^{2-\frac{(k+1)\sigma}{2}}(w_1 -w_2) \cdot \Lambda^{2-\frac{(k+1)\sigma}{2}}\lt(n_1 \nabla (n_1-n_2) \rt)\,dx\\
&\quad - \tilde\gamma \intr (n_1)^{k\lt(\frac{1}{\tilde\gamma}-2\rt)} \Lambda^{2-\frac{(k+1)\sigma}{2}}(w_1 -w_2) \cdot \Lambda^{2-\frac{(k+1)\sigma}{2}}\lt((n_1-n_2) \nabla n_2) \rt)\,dx\\
&\quad + \frac{1}{\tilde\gamma} \intr (n_1)^{k\lt(\frac{1}{\tilde\gamma}-2\rt)} \Lambda^{2-\frac{(k+1)\sigma}{2}}(w_1 -w_2) \cdot  \Lambda^{2-\frac{(k+3)\sigma}{2}}\lt((n_1)^{\frac{1}{\tilde\gamma}-1} \nabla(n_1 - n_2) \rt)\,dx\\
&\quad + \frac{1}{\tilde\gamma} \intr (n_1)^{k\lt(\frac{1}{\tilde\gamma}-2\rt)} \Lambda^{2-\frac{(k+1)\sigma}{2}}(w_1 -w_2) \cdot  \Lambda^{2-\frac{(k+3)\sigma}{2}}\lt[((n_1)^{\frac{1}{\tilde\gamma}-1} - (n_2)^{\frac{1}{\tilde\gamma}-1}) \nabla n_2) \rt]\,dx\\
&=: \sum_{i=1}^8 \mathcal{I}_i.
\end{aligned}\]
For $\mathcal{I}_2$, one directly gets
\[
\mathcal{I}_2 \le C\|w_1 -w_2\|_{H^{2-\frac\sigma2}}^2.
\]
For $\mathcal{I}_i$ with $i=3,4,6,7$, we use ($v$) of Lemma \ref{tech}, Lemma \ref{lambda_est} and the similar arguments employed in \eqref{diff_est_core4} to yield
\[
\mathcal{I}_3 \le -\mathcal{I}_1 + C\|w_1 -w_2\|_{H^{2-\frac\sigma2}}^2, \quad \mathcal{I}_4 \le C\|w_1 -w_2\|_{H^{2-\frac\sigma2}}^2, \mathcal{I}_6 \le C\|w_1-w_2\|_{H^{2-\frac\sigma2}}\|n_1 - n_2\|_{H^{2-\frac\sigma2}}, 
\]
and
\[
\mathcal{I}_7 \le \frac{1}{\tilde\gamma}\tilde{\mathcal{R}}(k) + C\|w_1-w_2\|_{H^{2-\frac\sigma2}}\|n_1 - n_2\|_{H^{2-\frac\sigma2}}.
\]
For $\mathcal{I}_5$, we may write as 
\[
\begin{aligned}
\mathcal{I}_5 &= - \tilde\gamma \intr (n_1)^{k\lt(\frac{1}{\tilde\gamma}-2\rt)} \Lambda^{2-\frac{(k+1)\sigma}{2}}(w_1 -w_2) \cdot [\Lambda^{2-\frac{(k+1)\sigma}{2}}, n_1] \nabla (n_1-n_2) \,dx \\
&\quad +\tilde\gamma \intr\nabla (n_1)^{k\lt(\frac{1}{\tilde\gamma}-2\rt)+1} \cdot \Lambda^{2-\frac{(k+1)\sigma}{2}}(w_1 -w_2)  \Lambda^{2-\frac{(k+1)\sigma}{2}} (n_1-n_2) \,dx\\
&\quad -\tilde\gamma \intr\Lambda^{2-\frac{k\sigma}{2}}(w_1 -w_2)  \cdot  \lt[\nabla \Lambda^{-\frac\sigma2},(n_1)^{k\lt(\frac{1}{\tilde\gamma}-2\rt)+1} \rt] \Lambda^{2-\frac{(k+1)\sigma}{2}} (n_1-n_2) \,dx\\
&\quad -\tilde\gamma  \tilde{\mathcal{R}}(k-1).
\end{aligned}
\]
We use Lemma \ref{tech} (ii) and (v),  Lemma \ref{lambda_est}, and  \eqref{tech_5} to obtain
\[
\begin{aligned}
	\mathcal{I}_5 \le C\|w_1-w_2\|_{H^{2-\frac\sigma2}}\|n_1 - n_2\|_{H^{2-\frac\sigma2}}  -\tilde\gamma  \tilde{\mathcal{R}}(k-1).
\end{aligned}
\]
For $\mathcal{I}_8$, we deduce
\[\begin{aligned}
\mathcal{I}_8 &\le C\|w_1-w_2\|_{H^{2-\frac\sigma2}} \lt\|(n_1- n_2) \nabla n_2\int_0^1 (n_2 + (n_1-n_2)\tau)^{\frac{1}{\tilde\gamma}-2}\,d\tau \rt\|_{\dot{H}^{2-\frac{(k+3)\sigma}{2}}}\\
&\le C\|w_1 -w_2\|_{H^{2-\frac\sigma2}}\|n_1 -n_2\|_{H^{2-\frac\sigma2}}.
\end{aligned}\]
Then we collect all the estimates for $\mathcal{I}_i$'s to attain

\bq\label{iter_uniq}
\begin{aligned}
\frac{d}{dt}\intr (n_1)^{k\lt(\frac{1}{\tilde\gamma}-2\rt)} |\Lambda^{2-\frac{(k+1)\sigma}{2}} (w_1 -w_2)|^2\,dx  & \le C\|w_1 -w_2\|_{H^{2-\frac\sigma2}}^2 + C\|n_1 -n_2\|_{H^{2-\frac\sigma2}}^2\\
&\quad -2\tilde\gamma \tilde{\mathcal R}(k-1) + \frac{2}{\tilde\gamma}\tilde{\mathcal R}(k),
\end{aligned}
\eq
and choose the minimum $k_0$ satisfying $\frac{(k_0 +2)\sigma}{2} \ge 1$. Then, we combine \eqref{nw_comb_est3} with \eqref{iter_uniq} to yield
\[
\begin{aligned}
\frac{d}{dt}&\lt( \|n_1 -n_2\|_{\dot{H}^{2-\frac\sigma2}}^2 + \|w_1 -w_2\|_{\dot{H}^{2-\frac\sigma2}}^2 + \sum_{k=1}^{k_0}\frac{1}{\tilde\gamma^{2k}} \intr (n_1)^{k\lt(\frac{1}{\tilde\gamma}-2\rt)} |\Lambda^{2-\frac{(k+1)\sigma}{2}}(w_1 -w_2)|^2\,dx\rt)\\
&\le C\|w_1 -w_2\|_{H^{2-\frac\sigma2}}^2 + C\|n_1 -n_2\|_{H^{2-\frac\sigma2}}^2 + \frac{2}{\tilde\gamma^{2k_0+1}}\tilde{\mathcal R}(k_0)\\
&\le C\|w_1 -w_2\|_{H^{2-\frac\sigma2}}^2 + C\|n_1 -n_2\|_{H^{2-\frac\sigma2}}^2,
\end{aligned}\]
and we further combine this with \eqref{low_diff_est3} to have
\[\begin{aligned}
\frac{d}{dt}&\lt( \|n_1 -n_2\|_{H^{2-\frac\sigma2}}^2 + \|w_1 -w_2\|_{H^{2-\frac\sigma2}}^2 + \sum_{k=1}^{k_0}\frac{1}{\tilde\gamma^{2k}} \intr (n_1)^{k\lt(\frac{1}{\tilde\gamma}-2\rt)} |\Lambda^{2-\frac{(k+1)\sigma}{2}}(w_1 -w_2)|^2\,dx\rt)\\
&\leq C\|w_1 -w_2\|_{H^{2-\frac\sigma2}}^2 + C\|n_1 -n_2\|_{H^{2-\frac\sigma2}}^2.
\end{aligned}\]
Finally, we apply Gr\"onwall's lemma to get the desired uniqueness.
\end{proof}

%
%
%
%
%

\section*{Acknowledgments}
The work of Y.-P. Choi and Y. Lee are supported by NRF grant no. 2022R1A2C1002820. The work of Y. Lee is supported by NRF grant (No. NRF-2022R1I1A1A01068481).

%
%
%
%
%

\appendix

\section{Euler system with sub-Manev interactions:  $1\leq\sigma<2$ case}\label{app.A}

In this appendix, we present the  $H^s$ estimates of solutions in the case $1\leq  \sigma < 2$, and from which we give a proof of Theorem \ref{main_thm2} by obtaining a priori temporal decay. This proof is also available for $\sigma=2$.

\begin{lemma}\label{high_est_app}
	Let the assumptions of Theorem \ref{main_thm2} be satisfied.
For $T>0$, suppose that $(n,w)$ is a smooth solution to \eqref{pER} with $\lambda=\pm1$ on the time interval $[0,T]$ decaying fast at infinity. 
	Then we have
	\[
	\begin{aligned}
		&\frac{d}{dt} \w{X}_s +\frac{s-\frac{d}{2}+\min\{1, d\tilde{\gamma}\}}{1+t} \w{X}_s   \\
		&\quad \lesssim
		(\|\nabla w\|_{L^\infty}+ \|\nabla n\|_{L^\infty}) \w{X}_s
		+ \frac{\w{X}_s}{(1+t)^2} 
		+  \frac{\|n\|_{L^\infty}+\|w\|_{L^\infty}}{(1+t)^{2+s-\frac d2}}+ \frac{\|n\|_{\dot{H}^{s-1}} +\|w\|_{\dot{H}^{s-1}}}{(1+t)^3} \\ 
		&\qquad + \frac{\|\nabla n\|_{L^\infty}+\|\nabla w\|_{L^\infty}}{(1+t)^{s+1-\frac d2}}  +\|n \|_{L^{\infty}}^{\frac{1}{\tilde{\gamma}}-1} \|n\|_{\dot{H}^{s-\sigma+1}},
	\end{aligned}
	\]
	where
	$\w{X}_{s} := (\|w\|_{\dot{H}^{s}}^2 + \|n\|_{\dot{H}^s}^2)^{\frac12}$.
\end{lemma}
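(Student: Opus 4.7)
The plan is to leverage the estimate already established in Lemma~\ref{pres_est1}, which provides a differential inequality for $\tfrac{1}{2}\tfrac{d}{dt}\w{X}_s^2$ whose right-hand side matches the terms appearing in the statement, except that the Riesz interaction contribution
\[
\mathcal{I} := \lambda \intr \Lambda^s w \cdot \nabla \Lambda^{s-\sigma}\bigl(n^{1/\tilde{\gamma}}\bigr)\,dx
\]
still sits on the left-hand side of that lemma. Thus the proof reduces to bounding $\mathcal{I}$ and then converting the squared-norm inequality into a first-order inequality for $\w{X}_s$.

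First I would bound $\mathcal{I}$ by Cauchy--Schwarz together with the isometry property of the Riesz operator, namely
\[
|\mathcal{I}| \;\leq\; \|w\|_{\dot{H}^s}\,\bigl\|n^{1/\tilde{\gamma}}\bigr\|_{\dot{H}^{s-\sigma+1}}.
\]
This is precisely where the sub-Manev restriction $\sigma \in [1,2)$ enters: it ensures $s-\sigma+1 \leq s$ so that no derivatives are lost relative to $\w{X}_s$, while $s-\sigma+1 \geq 0$ since $s > d/2 + 1$. Next, to control the composition $n^{1/\tilde{\gamma}}$ in a homogeneous Sobolev norm, I would distinguish according to whether $1/\tilde{\gamma}$ is an integer. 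When $1/\tilde{\gamma}$ is not an integer, Lemma~\ref{tech_4} applied with $\alpha = 1/\tilde{\gamma} \geq 1$ yields
\[
\bigl\|n^{1/\tilde{\gamma}}\bigr\|_{\dot{H}^{s-\sigma+1}} \;\lesssim\; \|n\|_{L^\infty}^{1/\tilde{\gamma}-1}\,\|n\|_{\dot{H}^{s-\sigma+1}},
\]
and the required constraint $s-\sigma+1 < 1/\tilde{\gamma} + 1/2$ is precisely assumption $(\mathcal{A}3)$. When $1/\tilde{\gamma}$ is a positive integer, the same bound follows from a finite iteration of the Kato--Ponce inequality \eqref{KP_ineq} with no extra restriction on $s$.

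Combining these bounds and absorbing the resulting product $\|n\|_{L^\infty}^{1/\tilde{\gamma}-1}\|n\|_{\dot{H}^{s-\sigma+1}}\,\w{X}_s$ into the right-hand side of Lemma~\ref{pres_est1} yields an inequality of the form
\[
\tfrac{1}{2}\tfrac{d}{dt}\w{X}_s^2 + \tfrac{s-\frac{d}{2}+\min\{1, d\tilde{\gamma}\}}{1+t}\,\w{X}_s^2 \;\lesssim\; R(t)\,\w{X}_s,
\]
where $R(t)$ collects exactly the terms listed in the conclusion. Dividing through by $\w{X}_s$ (after the standard regularisation $\sqrt{\w{X}_s^2+\varepsilon}$ followed by $\varepsilon\to 0$ to legitimise the division at times where $\w{X}_s$ could vanish) then delivers the stated inequality. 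The main subtlety I expect is the bookkeeping near the non-integer case of $1/\tilde{\gamma}$, since it is exactly there that $(\mathcal{A}3)$ is needed to stay below the regularity threshold $1/\tilde{\gamma} + 1/2$; the remaining transport and commutator contributions are direct imports from Lemma~\ref{pres_est1}.
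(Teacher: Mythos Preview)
Your proposal is correct and follows essentially the same route as the paper: both start from Lemma~\ref{pres_est1}, bound the Riesz term via Cauchy--Schwarz and Lemma~\ref{tech_4} (invoking $(\mathcal{A}3)$ when $1/\tilde{\gamma}$ is not an integer and Kato--Ponce otherwise), and then divide through by $\w{X}_s$. Your remarks on the role of the sub-Manev condition $\sigma\in[1,2)$ and on the $\varepsilon$-regularisation for the division are consistent with the paper's argument.
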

\begin{proof}
	From Lemma \ref{pres_est1}, let us recall that 
	\[
	\begin{aligned}
		\frac{1}{2} &\frac{d}{dt} \w{X}_s^2 +\frac{s-\frac{d}{2}+\min\{ d\tilde{\gamma}, 1\}}{1+t} \w{X}_s^2   
		-\lambda \intr \Lambda^{s}w \cdot \nabla \Lambda^{s-\sigma} (n^{\frac1{\tilde{\gamma}}}) \\
		&\lesssim
		(\|\nabla w\|_{L^\infty}+ \|\nabla n\|_{L^\infty}) \w{X}_s^2 
		+ \frac{\w{X}_s^2}{(1+t)^2} \\
		& \quad + \lt( \frac{\|n\|_{L^\infty}+\|w\|_{L^\infty}}{(1+t)^{2+s-\frac d2}}+ \frac{\|n\|_{\dot{H}^{s-1}} +\|w\|_{\dot{H}^{s-1}}}{(1+t)^3} + \frac{\|\nabla n\|_{L^\infty}+\|\nabla w\|_{L^\infty}}{(1+t)^{s+1-\frac d2}}\rt)\w{X}_s.
	\end{aligned}
	\]
	Then we use Lemma \ref{tech_4} to get
	\[
	\begin{aligned}
		\intr \Lambda^{s} w \cdot \nabla \Lambda^{s-\sigma} (n^{\frac{1}{\tilde{\gamma}}})\, dx\lesssim \|w\|_{\dot{H}^s}\|n^{\frac{1}{\tilde\gamma}}\|_{\dot{H}^{s-\sigma+1}}
		\lesssim \|w\|_{\dot{H}^s}\|n\|_{L^\infty}^{\frac{1}{\tilde{\gamma}}-1}\|n\|_{\dot{H}^{s-\sigma+1}}
	\end{aligned}
	\]
if $s \ge 1+\sigma $ and $s<\frac1{\tilde{\gamma}}+\sigma-1/2$.
Hence, we obtain
	\[
	\begin{aligned}
		\frac{1}{2} &\frac{d}{dt} \w{X}_s^2 +\frac{s-\frac{d}{2}+\min\{ d\tilde{\gamma}, 1\}}{1+t} \w{X}_s^2   \\
		&\lesssim
		(\|\nabla w\|_{L^\infty}+ \|\nabla n\|_{L^\infty}) \w{X}_s^2 
		+ \frac{\w{X}_s^2}{(1+t)^2} \\
		& \quad + \lt( \frac{\|n\|_{L^\infty}+\|w\|_{L^\infty}}{(1+t)^{2+s-\frac d2}}+ \frac{\|n\|_{\dot{H}^{s-1}} +\|w\|_{\dot{H}^{s-1}}}{(1+t)^3} + \frac{\|\nabla n\|_{L^\infty}+\|\nabla w\|_{L^\infty}}{(1+t)^{s+1-\frac d2}}\rt)\w{X}_s\\
		&\quad +\|n \|_{L^{\infty}}^{\frac{1}{\tilde{\gamma}}-1} \|n\|_{\dot{H}^{s-\sigma+1}} \w{X}_s
	\end{aligned}
	\]
	which implies the desired result.
\end{proof}

Now we provide a priori temporal decay in the case $1\leq \sigma < 2$.

\begin{lemma}\label{temp_est_app}
	Let the assumptions of Theorem \ref{main_thm2} be satisfied.
For $T>0$, suppose that $(n,w)$ is a smooth solution to \eqref{pER} with $\lambda=\pm1$ on the time interval $[0,T]$ decaying fast at infinity. 
	Then one has
	\[
	\frac{d}{dt} \w{Z} (t)
	+\frac{C_{d, \gamma}}{1+t} \w{Z} (t) \lesssim \w{Z}^2(t)  + \frac{\w{Z}(t) }{(1+t)^2} 
	+\frac{\w{Z}^{\frac{1}{\tilde{\gamma}}}(t)}{ (1+t)^{2-\sigma-\frac{1}{\tilde{\gamma}}}}
	\]
	with $C_{d, \gamma}=1+ \min\{1, d\tilde{\gamma}\}$ and $\w{Z}$ is defined in \eqref{temp_notation_2}.
	
\end{lemma}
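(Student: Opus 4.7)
The plan is to derive three differential inequalities---one each for $\w{n}_2(t)$, $\w{w}_2(t)$, and $\w{Y}_s(t)$---from Lemmas~\ref{pres_est1} and~\ref{high_est_app}, and then add them to control $\w{Z}(t)$. For the $L^2$ pieces, multiplying the inequalities in Lemma~\ref{pres_est1} by $(1+t)^{-d/2-1}$ converts the damping coefficients $d(\tilde{\gamma}-\frac12)/(1+t)$ and $(1-\frac d2)/(1+t)$ into $(1+d\tilde{\gamma})/(1+t)$ and $2/(1+t)$, respectively, both of which dominate $C_{d,\gamma}/(1+t)$. The quadratic right-hand sides collapse to $\w{Z}^2+\w{Z}/(1+t)^2$ after inserting the normalizations in \eqref{temp_notation} and invoking the Gagliardo--Nirenberg relations of Lemma~\ref{bdds2}, so the only piece that requires real work is the Riesz forcing $\|\nabla\Lambda^{-\sigma}(n^{1/\tilde{\gamma}})\|_{L^2}$ appearing in the $\w{w}_2$-bound.

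I will treat this forcing by splitting on $\sigma$. For $\sigma=1$, Plancherel combined with the pointwise estimate $n^{1/\tilde{\gamma}}\le\|n\|_{L^\infty}^{1/\tilde{\gamma}-1}n$ immediately gives $\|\nabla\Lambda^{-1}(n^{1/\tilde{\gamma}})\|_{L^2}\le\|n^{1/\tilde{\gamma}}\|_{L^2}\le\|n\|_{L^\infty}^{1/\tilde{\gamma}-1}\|n\|_{L^2}$. For $\sigma\in(1,2)$, Hardy--Littlewood--Sobolev first yields $\|\nabla\Lambda^{-\sigma}(n^{1/\tilde{\gamma}})\|_{L^2}\lesssim\|n^{1/\tilde{\gamma}}\|_{L^r}$ with $1/r=1/2+(\sigma-1)/d$; then, using the factorization $n^{1/\tilde{\gamma}}=n^2\cdot n^{1/\tilde{\gamma}-2}$ (valid because the admissible range of $\gamma$ forces $1/\tilde{\gamma}\ge 2$), H\"older produces $\lesssim\|n\|_{L^\infty}^{1/\tilde{\gamma}-2}\|n\|_{L^2}\|n\|_{L^{d/(\sigma-1)}}$, and finally the Sobolev embedding $\dot{H}^{(d-2\sigma+2)/2}\hookrightarrow L^{d/(\sigma-1)}$ promotes the last factor to a homogeneous Sobolev norm lying within the interpolation range of Lemma~\ref{bdds2}. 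In either case, inserting $\|n\|_{L^\infty}=(1+t)\w{n}_\infty$, $\|n\|_{L^2}=(1+t)^{d/2+1}\w{n}_2$, and $\|n\|_{\dot{H}^\ell}=(1+t)^{d/2+1-\ell}\w{n}_{\ell,2}$ gives exactly
\[
(1+t)^{-d/2-1}\|\nabla\Lambda^{-\sigma}(n^{1/\tilde{\gamma}})\|_{L^2}\lesssim\frac{\w{Z}^{1/\tilde{\gamma}}(t)}{(1+t)^{2-\sigma-1/\tilde{\gamma}}}.
\]

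For the highest-order level, I multiply Lemma~\ref{high_est_app} through by $(1+t)^{s-d/2-1}$, which turns the left-hand side into $\frac{d}{dt}\w{Y}_s+C_{d,\gamma}\w{Y}_s/(1+t)$ thanks to the algebraic identity $(s-d/2-1)+(s-d/2+\min\{1,d\tilde{\gamma}\})\cdot(-1)\cdot\mathrm{sign}=-(1+\min\{1,d\tilde{\gamma}\})$ governing the weight shift. Each of the five structural terms on its right-hand side reduces to $\w{Z}^2$ or $\w{Z}/(1+t)^2$ via Lemma~\ref{bdds2}, and the remaining nonlinearity $\|n\|_{L^\infty}^{1/\tilde{\gamma}-1}\|n\|_{\dot{H}^{s-\sigma+1}}$ carries precisely the same normalized bound $\w{Z}^{1/\tilde{\gamma}}/(1+t)^{2-\sigma-1/\tilde{\gamma}}$, since the cumulative time exponent works out to $(s-d/2-1)+(1/\tilde{\gamma}-1)+(d/2+\sigma-s)=1/\tilde{\gamma}+\sigma-2$. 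Summing the resulting three inequalities and using $\w{Z}\simeq\w{n}_2+\w{w}_2+\w{Y}_s$ finishes the proof. The principal obstacle is matching the Riesz weights across the low- and high-order levels: at the $L^2$ level only mass-type information is available and $L^2\cap L^\infty\not\hookrightarrow L^r$ for $r<2$, so the decomposition $n^2\cdot n^{1/\tilde{\gamma}-2}$ (rather than the naive $n\cdot n^{1/\tilde{\gamma}-1}$) is the device that produces $L^2$ and $L^{d/(\sigma-1)}$ factors whose normalized weights assemble into the same exponent $\sigma+1/\tilde{\gamma}-2$ that emerges from the high-order Sobolev nonlinearity.
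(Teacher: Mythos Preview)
Your argument is correct and follows the same route as the paper: derive weighted differential inequalities for $\w{n}_2$, $\w{w}_2$, and $\w{Y}_s$ from Lemmas~\ref{pres_est1} and~\ref{high_est_app}, convert via the product rule for $(1+t)^{\alpha}$-weights, and close with the interpolation relations of Lemma~\ref{bdds2}. The paper is terser at the $L^2$ level---it simply asserts \eqref{case1_lower_est}---whereas you spell out the Hardy--Littlewood--Sobolev handling of $\|\nabla\Lambda^{-\sigma}(n^{1/\tilde{\gamma}})\|_{L^2}$; one small caveat is that your factorization $n^{1/\tilde{\gamma}}=n^2\cdot n^{1/\tilde{\gamma}-2}$ tacitly assumes $\gamma\le 2$, which is not an explicit hypothesis of the lemma itself but is in any case implied by the parameter constraints derived later in the proof of Theorem~\ref{main_thm2}.
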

\begin{proof}
	From Lemma \ref{pres_est1},  we follow the notation \eqref{temp_notation} to obtain
\bq\label{case1_lower_est}
\frac{d}{dt} (\w{n}_2+\w{w}_2) +  \frac{1+\min\{1, d\tilde{\gamma}\} }{1+t} (\w{n}_2 +\w{w}_2)\lesssim
\w{Z}^2(t) +  \frac{\w{Z}(t)}{(1+t)^2} 
+ \frac{ \w{Z}^{\frac{1}{\tilde{\gamma}}}(t)}{(1+t)^{2-\sigma-\frac{1}{\tilde{\gamma}}}}.
\eq
Next, we use the notation \eqref{temp_notation_2} and deduce from Lemma \ref{high_est_app} that
	\[
	\begin{aligned}
		\frac{d}{dt}& \w{Y}_s +\frac{1+\min\{1, d\tilde{\gamma}\}}{1+t} \w{Y}_s \\
		&\lesssim (\w{w}_{1, \infty} + \w{n}_{1, \infty})\w{Y}_s +\frac{\w{Y}_s}{(1+t)^2} +\frac{\w{n}_{\infty}+\w{w}_{\infty} + \w{n}_{s-1, 2}+\w{w}_{s-1, 2}+\w{w}_{1, \infty}+\w{n}_{1, \infty}}{(1+t)^2}\\
		&\quad  +\frac{\w{n}_{\infty}^{\frac{1}{\tilde{\gamma}}-2} \w{n}_{1, \infty} \w{n}_{s-\sigma, 2} + \w{n}_{\infty}^{\frac{1}{\tilde{\gamma}}-1} \w{n}_{s-\sigma+1, 2}}{(1+t)^{2-\sigma-\frac{1}{\tilde{\gamma}}}},
	\end{aligned}
	\]
	and we use Lemma \ref{bdds2} along with the relations
	\[
	\w{n}_{s-\sigma+1, 2}(t) \lesssim \w{n}_{s, 2}^{\frac{s-\sigma+1}{s}}(t)\, \w{n}_{2}^{1-\frac{s-\sigma+1}{s}}(t),\qquad \quad 
	\w{n}_{s-\sigma, 2}(t) \lesssim \w{n}_{s, 2}^{\frac{\sigma}{s}}(t)\, \w{n}_2^{1-\frac{\sigma}{s}}(t)\quad \text{for}\ s>\frac{d}{2}+1
	\] 
	to yield
	\[
	\frac{d}{dt} \w{Y}_s (t)
	+\frac{1+\min\{1,  d\tilde{\gamma}\}}{1+t} \w{Y}_s (t) \lesssim \w{Z}^2(t)  + \frac{\w{Z}(t) }{(1+t)^2} 
	+\frac{\w{Z}^{\frac{1}{\tilde{\gamma}}}(t)}{ (1+t)^{2-\sigma-\frac{1}{\tilde{\gamma}}}}.
	\]
	Hence we combine this with \eqref{case1_lower_est} to conclude the desired result.
\end{proof}

\begin{proof}[Proof of Theorem \ref{main_thm2}]
	We again deduce the local existence from the arguments in Section \ref{sec:3}. For the global existence,  Lemma \ref{temp_est_app} shows
	\[
	\frac{d}{dt} \w{Z} (t)
	+\frac{C_{d, \gamma}}{1+t} \w{Z} (t) \leq C_3 \lt( \w{Z}^2(t)  + \frac{\w{Z}(t) }{(1+t)^2} 
	+\frac{\w{Z}^{\frac{1}{\tilde{\gamma}}}(t)}{ (1+t)^{2-\sigma-\frac{1}{\tilde{\gamma}}}}\rt)
	\]
	with $C_{d, \gamma}=1+ \min\{1, d\tilde{\gamma}\}$ for a constant $C_3>0$ independent of $t$.
	By applying Lemma \ref{tech_6} to the above with $a=C_{d, \gamma}$, we have
	\[
	\widetilde{Z}(t) \le \frac{2e^{\frac{C_3t}{1+t}}}{(1+t)^a}\widetilde{Z}(0) \quad \forall\, t\ge 0,
	\]
	whenever 
	\[
	\sigma <\min \{ 1, d \tilde{\gamma}\} \lt(\frac{1}{\tilde{\gamma}}-1\rt),
	\]
	which is equivalent to
	\[
	\gamma <\min\lt\{ 1+\frac{2}{\sigma+1}, \ 1+\frac{2(d-\sigma)}{d}\rt\}.
	\]
	It implies
	\[
	\gamma < \lt\{ \begin{array}{lcl} 1+ \frac{2}{\sigma+1} & \mbox{if} & d\ge 3,\\[2mm]
	3-\sigma & \mbox{if} &d=2,\end{array}\rt.
	\]
due to $\frac{1}{\sigma+1} \le \frac{d-\sigma}{d}$ when $d\ge 3$ and $\sigma \in [1,2]$.

	For the uniqueness, the estimates in the proof of Theorem \ref{main_thm3} again gives
	\[\begin{aligned}
	\frac{d}{dt}\lt(\|n_1 - n_2\|_{L^2}^2 + \|w_1 -w_2\|_{L^2}^2 \rt) &\le C\|n_1-n_2\|_{L^2}^2   +C\|w_1-w_2\|_{L^2}^2\\
	&\quad + C\|w_1-w_2\|_{L^2}\|\nabla\Lambda^{-\sigma}((n_1)^{\frac{1}{\tilde\gamma}} - (n_2)^{\frac{1}{\tilde\gamma}})\|_{L^2}\\
	&\le C\|n_1-n_2\|_{L^2}^2   +C\|w_1-w_2\|_{L^2}^2,
	\end{aligned}\]
	where we used Hardy--Littlewood--Sobolev theorem to get
	\[\begin{aligned}
	\|\nabla\Lambda^{-\sigma}((n_1)^{\frac{1}{\tilde\gamma}} - (n_2)^{\frac{1}{\tilde\gamma}})\|_{L^2}&\le C \|(n_1)^{\frac{1}{\tilde\gamma}} - (n_2)^{\frac{1}{\tilde\gamma}}\|_{L^{\frac{1}{\frac12 + \frac{\sigma-1}{d}}}}\\
	&\le C\|n_1 - n_2\|_{L^2} \lt\|\int_0^1 (n_2 + (n_1- n_2)\tau)^{\frac{1}{\tilde\gamma}-1}\,d\tau \rt\|_{L^{\frac{d}{\sigma-1}}}\\
	&\le C\|n_1-n_2\|_{L^2}.
	\end{aligned}\]
	Thus, applying Gr\"onwall's lemma gives the desired result.
\end{proof}
%
%
%
%
%
\section{Highest-order estimates in the repulsive case }\label{app.B}
In this part, we provide the details of proof of \eqref{n_lower_est}. For this, we actually show
	\[
	\begin{aligned}
		\frac{1}{2} &\frac{d}{dt} \intr n^{\frac{1}{\tilde{\gamma}}-2} |\Lambda^\ell n|^2\,dx  +\frac{\ell }{1+t} \intr n^{\frac{1}{\tilde{\gamma}}-2}  |\Lambda^{\ell}n|^2\,dx- \tilde\gamma \intr n^{\frac{1}{\tilde\gamma}-1}\nabla \Lambda^{\ell-\frac\sigma2}n \cdot \Lambda^{\ell+\frac\sigma2} w\,dx\\
		&\leq C\|n\|_{L^\infty}^{\frac{1}{\tilde{\gamma}}-2} \|\nabla w\|_{L^{\infty}} \|n\|_{\dot{H}^{\ell}}^2+\frac{C}{(1+t)^2} \|n\|_{L^\infty}^{\frac{1}{\tilde{\gamma}}-2} \|n\|_{\dot{H}^{\ell}}^2 + \frac{C}{(1+t)^{2+\ell-\frac d2}}\|n\|_{L^\infty}^{\frac{1}{\tilde\gamma}-1}\|n\|_{\dot{H}^\ell}\\
		&\quad + C\|n\|_{L^\infty}^{\frac{1}{\tilde\gamma}-2}\|\nabla n\|_{L^\infty}\|w\|_{\dot{H}^\ell}\|n\|_{\dot{H}^\ell} + \frac{C}{(1+t)^{1+\ell-\frac d2}}\|n\|_{L^\infty}^{\frac{1}{\tilde\gamma}-2}\|\nabla n\|_{L^\infty}\|n\|_{\dot{H}^\ell} \\
		&\quad + \frac{C}{(1+t)^3}\|n\|_{L^\infty}^{\frac{1}{\tilde\gamma}-2}\|n\|_{\dot{H}^{\ell-1}}\|n\|_{\dot{H}^\ell} + C\|n\|_{L^\infty}^{\frac{1}{\tilde\gamma}-2} \|n\|_{\dot{H}^{s-\sigma}}^{\frac{\frac d2+(1-\sigma)}{s-\sigma}}\|n\|_{L^2}^{1-\frac{\frac d2+(1-\sigma)}{s-\sigma}}\|n\|_{\dot{H}^{\ell+\frac\sigma2}} \|w\|_{\dot{H}^{\ell+\frac\sigma2}}
	\end{aligned}
	\]
	for any $1<\ell \le s-\frac\sigma2$.
	
	Now, we observe that
	\[ 
	\begin{aligned}
		\frac{1}{2} \frac{d}{dt} \intr n^{\frac{1}{\tilde{\gamma}}-2} |\Lambda^\ell n|^2 \,dx &= \frac{1}{2}\lt(\frac{1}{\tilde{\gamma}}-2\rt) \intr n^{\frac{1}{\tilde{\gamma}}-3} \pa_t n\, |\Lambda^{\ell}n|^2 \,dx
		+ \intr n^{\frac{1}{\tilde{\gamma}}-2} \Lambda^\ell n \Lambda^\ell \pa_t n\,dx =: \sfI_1 + \sfI_2.
	\end{aligned}
	\]
For $\sfI_1$, we obtain
	\[
	\begin{aligned}
		\sfI_1&= 
		-\frac{1}{2} \intr   w \cdot \nabla (n^{\frac{1}{\tilde{\gamma}}-2}) \, |\Lambda^{\ell}n|^2\,dx 
		-\frac{1}{2} \intr  v\cdot \nabla (n^{\frac{1}{\tilde{\gamma}}-2}) \, |\Lambda^{\ell}n|^2 \,dx \\
		&\quad
		-\frac 12(1-2\tilde\gamma) \intr n^{\frac{1}{\tilde{\gamma}}-2}  \nabla \cdot w \, |\Lambda^{\ell}n|^2 \,dx
		-\frac12(1-2\tilde\gamma) \intr n^{\frac{1}{\tilde{\gamma}}-2}   \nabla \cdot v\, |\Lambda^{\ell}n|^2 \,dx\\
		&=: \sum_{i=1}^4 \sfI_{1i}.
	\end{aligned}
	\]
	Here, we easily estimate
	\[
	\sfI_{13} \lesssim \|n\|_{L^\infty}^{\frac{1}{\tilde{\gamma}}-2} \|\nabla w\|_{L^{\infty}} \|n\|_{\dot{H}^{\ell}}^2
	\]
	and
	\[
		\sfI_{14} \leq -\frac{\frac d2-d\tilde\gamma}{(1+t)}\intr n^{\frac{1}{\tilde{\gamma}}-2}  |\Lambda^{\ell}n|^2\,dx +\frac{C}{(1+t)^2} \|n\|_{L^\infty}^{\frac{1}{\tilde{\gamma}}-2} \|n\|_{\dot{H}^{\ell}}^2.
	\]
	Meanwhile, for $\sfI_2$, we get
	\[
	\begin{aligned}
		\sfI_2&= -\intr n^{\frac{1}{\tilde{\gamma}}-2} \Lambda^{\ell} n\,  (w \cdot \nabla \Lambda^{\ell} n)\,dx -\intr n^{\frac{1}{\tilde{\gamma}}-2}\Lambda^{\ell} n (v \cdot \nabla \Lambda^{\ell} n)\,dx\\
		&\quad +\ell \sum_{1\leq k\leq d}\intr n^{\frac{1}{\tilde{\gamma}}-2} \Lambda^{\ell} n \, \pa_k v \cdot  \nabla \pa_k \Lambda^{\ell-2} n\,dx \\
		&\quad -\tilde{\gamma} \intr n^{\frac{1}{\tilde{\gamma}}-2} \Lambda^{\ell} n \, \Lambda^{\ell}(n \nabla \cdot v ) \,dx
		+\intr n^{\frac{1}{\tilde{\gamma}}-2} \Lambda^{\ell} n \, [w, \Lambda^{\ell}] \cdot \nabla n\,dx	\\
		&\quad - \tilde{\gamma}\intr n^{\frac{1}{\tilde\gamma}-1} \Lambda^\ell n \nabla \cdot\Lambda^\ell w\,dx +  \tilde{\gamma}\intr n^{\frac{1}{\tilde{\gamma}}-2} \Lambda^{\ell} n [n, \Lambda^\ell]\nabla \cdot w\,dx\\
		&\quad + \intr n^{\frac{1}{\tilde{\gamma}}-2} \Lambda^{\ell} n \,   \Big( [v, \Lambda^\ell]\cdot\nabla n -\ell \sum_{1\leq k\leq d}\pa_k v \cdot  \nabla \pa_k \Lambda^{\ell-2} n  \Big)\,dx\\
		&=: \sum_{i=1}^8 \sfI_{2i},
	\end{aligned}
	\]
	and integration by parts implies
	\[\begin{aligned}
	\sfI_{21} = \frac12\intr \nabla\cdot\lt(n^{\frac{1}{\tilde\gamma}-2} w\rt)|\Lambda^\ell n|^2\,dx  \le -\sfI_{11} + C\|n\|_{L^\infty}^{\frac{1}{\tilde\gamma}-2}\|\nabla w\|_{L^\infty}\|n\|_{\dot{H}^\ell}^2,
	\end{aligned}\]
	and
	\[\begin{aligned}
	\sfI_{22}&= \frac12\intr \nabla \cdot \lt(n^{\frac{1}{\tilde\gamma}-2} v\rt)|\Lambda^\ell n|^2\,dx  \le -\sfI_{12} + \frac{\frac d2}{1+t}\intr n^{\frac{1}{\tilde\gamma}-2}|\Lambda^\ell n|^2 \,dx + \frac{C}{(1+t)^2}\|n\|_{L^\infty}^{\frac{1}{\tilde\gamma}-2}\|n\|_{\dot{H}^\ell}^2.
	\end{aligned}\]	
We also directly obtain
\[
\sfI_{23} \le -\frac{\ell}{1+t}\intr n^{\frac{1}{\tilde\gamma}-2} |\Lambda^\ell n|^2\,dx + \frac{C}{(1+t)^2}\|n\|_{L^\infty}^{\frac{1}{\tilde\gamma} -2}\|n\|_{\dot{H}^\ell}^2
\]
and
\[\begin{aligned}
\sfI_{24}&\le -\frac{d\tilde\gamma}{1+t}\intr n^{\frac{1}{\tilde\gamma}-2} |\Lambda^\ell n|^2\,dx + \frac{C}{(1+t)^2}\lt(\|n\|_{L^\infty}^{\frac{1}{\tilde\gamma}-2} \|n\|_{\dot{H}^\ell}^2 + \|n\|_{L^\infty}^{\frac{1}{\tilde\gamma}-1}\|K\|_{\dot{H}^\ell}\|n\|_{\dot{H}^\ell} \rt)\\
&\le -\frac{d\tilde\gamma}{1+t}\intr n^{\frac{1}{\tilde\gamma}-2} |\Lambda^\ell n|^2\,dx + \frac{C}{(1+t)^2}\|n\|_{L^\infty}^{\frac{1}{\tilde\gamma}-2} \|n\|_{\dot{H}^\ell}^2 + \frac{C}{(1+t)^{2+\ell-\frac d2}}\|n\|_{L^\infty}^{\frac{1}{\tilde\gamma}-1}\|n\|_{\dot{H}^\ell}.
\end{aligned}\]
Next, by using \eqref{tech_1}, \eqref{tech_2}, and Proposition \ref{prop_bur}, we deduce
\[
\sfI_{25}+\sfI_{27}\lesssim \|n\|_{L^\infty}^{\frac{1}{\tilde\gamma}-2}\|n\|_{\dot{H}^\ell}\lt(\|\nabla n\|_{L^\infty} \|w\|_{\dot{H}^\ell} + \|\nabla w\|_{L^\infty}\|n\|_{\dot{H}^\ell} \rt)
\]
and
\[\begin{aligned}
\sfI_{28}&\lesssim \|n\|_{L^\infty}^{\frac{1}{\tilde\gamma}-2}\|n\|_{\dot{H}^\ell}\lt(\|\nabla n\|_{L^\infty}\|v\|_{\dot{H}^\ell} + \|\nabla^2 v\|_{L^\infty}\|n\|_{\dot{H}^{\ell-1}}\rt)\\
&\lesssim \|n\|_{L^\infty}^{\frac{1}{\tilde\gamma}-2}\|n\|_{\dot{H}^\ell}\lt( \frac{\|\nabla n\|_{L^\infty}}{(1+t)^{\ell+1-\frac d2}} + \frac{\|n\|_{\dot{H}^{\ell-1}}}{(1+t)^3} \rt).
\end{aligned}\]
Finally, we use \eqref{tech_5}, the Gagliardo--Nirenberg--Sobolev inequality, and Lemma \ref{tech_4} to get
\[
\begin{aligned}
\sfI_{26}&= \tilde\gamma \intr \nabla \Lambda^{-\frac \sigma 2}\lt( n^{\frac{1}{\tilde\gamma}-1} \Lambda^\ell n\rt) \cdot \Lambda^{\ell+\frac\sigma2} w\,dx\\
&= \tilde\gamma\intr \lt(\lt[\nabla \Lambda^{-\frac \sigma 2}, n^{\frac{1}{\tilde\gamma}-1}\rt] \Lambda^\ell n + n^{\frac{1}{\tilde\gamma}-1}\nabla \Lambda^{\ell-\frac\sigma2}n \rt)\cdot \Lambda^{\ell+\frac\sigma2} w\,dx\\
&\le C\|\Lambda^{1-\sigma} (n^{\frac{1}{\tilde\gamma}-1})\|_{L^\infty} \|\Lambda^{\ell+\frac\sigma2} n\|_{L^2}\|\Lambda^{\ell+\frac\sigma 2}w\|_{L^2} + C\|n^{\frac{1}{\tilde\gamma}-1}\|_{\dot{H}^{\frac{d}{2}+1-\sigma}} \|\Lambda^{\ell+\frac d2} n\|_{L^2}\|\Lambda^{\ell+\frac\sigma2} w\|_{L^2} \\
&\quad + \tilde\gamma \intr n^{\frac{1}{\tilde\gamma}-1}\nabla \Lambda^{\ell-\frac\sigma2}n \cdot \Lambda^{\ell+\frac\sigma2} w\,dx\\
&\le  C\|n^{\frac{1}{\tilde\gamma}-1}\|_{\dot{H}^{s-\sigma}}^{\frac{\frac d2 + (1-\sigma)}{s-\sigma}}\|n^{\frac{1}{\tilde\gamma}-1}\|_{L^2}^{1-\frac{\frac d2 + (1-\sigma)}{s-\sigma}}\|\Lambda^{\ell+\frac\sigma2} n\|_{L^2}\|\Lambda^{\ell+\frac\sigma 2}w\|_{L^2}   +  \tilde\gamma \intr n^{\frac{1}{\tilde\gamma}-1}\nabla \Lambda^{\ell-\frac\sigma2}n \cdot \Lambda^{\ell+\frac\sigma2} w\,dx\\
&\le C\|n\|_{L^\infty}^{\frac{1}{\tilde\gamma}-2}\|n\|_{\dot{H}^{s-\sigma}}^{\frac{\frac d2 +(1-\sigma)}{s-\sigma}}\|n\|_{L^2}^{1-\frac{\frac d2 +(1-\sigma)}{s-\sigma}}\|\Lambda^{\ell+\frac\sigma2} n\|_{L^2}\|\Lambda^{\ell+\frac\sigma 2}w\|_{L^2}  +  \tilde\gamma \intr n^{\frac{1}{\tilde\gamma}-1}\nabla \Lambda^{\ell-\frac\sigma2}n \cdot \Lambda^{\ell+\frac\sigma2} w\,dx
\end{aligned}
\]
under the additional condition $s<\frac{1}{\tilde{\gamma} }+\sigma -\frac 12$ when $\frac{1}{\tilde{\gamma}}$ is not an integer. 
Hence we collect all the estimates for $\sfI_{1i}$'s and $\sfI_{2i}$'s to yield the desired result.

%
%
%
%
%

\section{Highest-order estimates in the attractive case}
%
%
%
%
%
In this appendix, we present the proofs of Lemmas \ref{Gap} and  \ref{higher_it}.
\subsection{Proof of Lemma \ref{Gap}}\label{app.C}
For $k\ge 0$, we first see that 
\bq\label{P_k}
\begin{aligned}
	\mathcal{R}_s (k) 
	&= \frac{1}{\tilde{\gamma}} \intr n^{k(\frac{1}{\tilde{\gamma}}-2)} \Lambda^{s-\frac{k \sigma}{2}} w \cdot  \Lambda^{s-\frac{(k+2)\sigma}{2}} (n^{\frac{1}{\tilde{\gamma}}-1} \nabla n)\,dx\\
	& =\frac{1}{\tilde{\gamma}} \intr n^{k(\frac{1}{\tilde{\gamma}}-2)} \Lambda^{s-\frac{k \sigma}{2}} w \cdot [\Lambda^{s-\frac{(k+2)\sigma}{2}}, n^{\frac{1}{\tilde{\gamma}}-1}] \nabla n\, dx \\
	&\quad +\frac{1}{\tilde{\gamma}} \intr n^{k(\frac{1}{\tilde{\gamma}}-2)+\frac{1}{\tilde{\gamma}}-1} \Lambda^{s-\frac{k \sigma}{2}} w \cdot \Lambda^{s-\frac{(k+2)\sigma}{2}} \nabla n\,dx\\
	&\le C\|n\|_{L^\infty}^{k\lt(\frac{1}{\tilde\gamma}-2\rt)}\|w\|_{\dot{H}^{s-\frac{k\sigma}{2}}} \lt(\|\nabla (n^{\frac{1}{\tilde\gamma}-1})\|_{L^\infty}\|n\|_{\dot{H}^{s-\frac{(k+2)\sigma}{2}}} + \|\nabla n\|_{L^\infty}\|n^{\frac{1}{\tilde\gamma}-1}\|_{\dot{H}^{s-\frac{(k+2)\sigma}{2}}} \rt)\\
	 &\quad +\frac{1}{\tilde{\gamma}} \intr n^{(k+1)(\frac{1}{\tilde{\gamma}}-2)+1} \Lambda^{s-\frac{k \sigma}{2}} w \cdot \Lambda^{s-\frac{(k+2)\sigma}{2}} \nabla n\,dx\\
	 &\le C \|n\|_{L^\infty}^{(k+2)\lt(\frac{1}{\tilde\gamma}-2\rt)}\|\nabla n\|_{L^\infty}\|w\|_{\dot{H}^{s-\frac{k\sigma}{2}}} \|n\|_{\dot{H}^{s-\frac{(k+2)\sigma}{2}}}\\
	 &\quad +\frac{1}{\tilde{\gamma}} \intr n^{(k+1)(\frac{1}{\tilde{\gamma}}-2)+1} \Lambda^{s-\frac{k \sigma}{2}} w \cdot \Lambda^{s-\frac{(k+2)\sigma}{2}} \nabla n\,dx,
\end{aligned}
\eq
where we use \eqref{tech_1} and Lemma \ref{tech_4} with the assumption $\frac{(k+2)\sigma}{2} \leq s < \frac{1}{\tilde{\gamma}}-\frac{1}{2} +\frac{(k+2)\sigma}{2}$ if $\frac{1}{\tilde{\gamma}}$ is not an integer. 

On the other hand, we write as 
\begin{align*}
\frac{1}{\tilde\gamma}\mathcal{P}_s(k+1)
&= \intr n^{(k+1)\lt(\frac{1}{\tilde\gamma}-2\rt)+1} \Lambda^{s-\frac{(k+1)\sigma}{2}} w \cdot \nabla \Lambda^{s-\frac{(k+1)\sigma}{2}}n\,dx\\
&\quad + \intr n^{(k+1)\lt(\frac{1}{\tilde\gamma}-2\rt)}  \Lambda^{s-\frac{(k+1)\sigma}{2}} w \cdot \lt[\Lambda^{s-\frac{(k+1)\sigma}{2}}, n \rt]\nabla n\,dx\\
&= -\intr  n^{(k+1)\lt(\frac{1}{\tilde\gamma}-2\rt)+1}\lt(\nabla \cdot \Lambda^{s-\frac{(k+1)\sigma}{2}} w\rt)  \Lambda^{s-\frac{(k+1)\sigma}{2}}n\,dx\\
&\quad -\intr \nabla (n^{(k+1)\lt(\frac{1}{\tilde\gamma}-2\rt)+1})\cdot \Lambda^{s-\frac{(k+1)\sigma}{2}} w \, \Lambda^{s-\frac{(k+1)\sigma}{2}}n\,dx\\
&\quad + \intr n^{(k+1)\lt(\frac{1}{\tilde\gamma}-2\rt)}  \Lambda^{s-\frac{(k+1)\sigma}{2}} w \cdot \lt[\Lambda^{s-\frac{(k+1)\sigma}{2}}, n \rt]\nabla n\,dx\\
&= \intr\Big[\Lambda^{-\frac \sigma2}\nabla , n^{(k+1)\lt(\frac{1}{\tilde\gamma}-2\rt)+1}\Big]  \Lambda^{s-\frac{(k+1)\sigma}{2}}n  \cdot \Lambda^{s-\frac{k\sigma}{2}}w\,dx\\
&\quad +  \intr n^{(k+1)\lt(\frac{1}{\tilde\gamma}-2\rt)+1}\Lambda^{s-\frac{k\sigma}{2}} w \cdot \nabla \Lambda^{s-\frac{(k+2)\sigma}{2}}n\,dx\\
&\quad -\intr \nabla (n^{(k+1)\lt(\frac{1}{\tilde\gamma}-2\rt)+1})\cdot \Lambda^{s-\frac{(k+1)\sigma}{2}} w \, \Lambda^{s-\frac{(k+1)\sigma}{2}}n\,dx\\
&\quad + \intr n^{(k+1)\lt(\frac{1}{\tilde\gamma}-2\rt)}  \Lambda^{s-\frac{(k+1)\sigma}{2}} w \cdot \lt[\Lambda^{s-\frac{(k+1)\sigma}{2}}, n \rt]\nabla n\,dx\\
&=: \sum_{i=1}^4 \sfJ_i
\end{align*}
by integration by parts.
By using \eqref{tech_5} and the Gagliardo--Nirenberg interpolation together with Lemma \ref{tech_4}, we estimate $\sfJ_1$ as 
\[\begin{aligned}
	\sfJ_1 &\lesssim \lt\| \Big[\Lambda^{-\frac \sigma2}\nabla , n^{(k+1)\lt(\frac{1}{\tilde\gamma}-2\rt)+1}\Big]  \Lambda^{s-\frac{(k+1)\sigma}{2}}n \rt\|_{L^2}\|w\|_{\dot{H}^{s-\frac{k\sigma}{2}}}\\
	&\lesssim \lt(\|\Lambda^{1-\sigma} (n^{(k+1)\lt(\frac{1}{\tilde\gamma}-2\rt)+1})\|_{L^{\infty}} + \|n^{(k+1)\lt(\frac{1}{\tilde\gamma}-2\rt)+1}\|_{\dot{H}^{\frac{d}{2}+1-\sigma}} \rt)\|n\|_{\dot{H}^{s-\frac{k\sigma}{2}}}\|w\|_{\dot{H}^{s-\frac{k\sigma}{2}}}\\
	&\lesssim \|n^{(k+1)\lt(\frac{1}{\tilde\gamma}-2\rt)+1}  \|_{\dot{H}^{s-\sigma}}^{\frac{\frac d2 +1-\sigma}{s-\sigma}}\| n^{(k+1)\lt(\frac{1}{\tilde\gamma}-2\rt)+1} \|_{L^2}^{1-\frac{\frac d2 +1-\sigma}{s-\sigma}}\|w\|_{\dot{H}^{s-\frac{k\sigma}{2}}} \|n\|_{\dot{H}^{s-\frac{k\sigma}{2}}}\\
	&\lesssim \|n\|_{L^\infty}^{(k+1)\lt(\frac{1}{\tilde\gamma}-2\rt)} \|n\|_{\dot{H}^{s-\sigma}}^{\frac{\frac d2 +1-\sigma}{s-\sigma}} \|n\|_{L^2}^{1-\frac{\frac d2+1-\sigma}{s-\sigma}}\|w\|_{\dot{H}^{s-\frac{k\sigma}{2}}} \|n\|_{\dot{H}^{s-\frac{k\sigma}{2}}}.
\end{aligned}\]
We also get
 \[
\sfJ_3  + \sfJ_4 \lesssim \|n\|_{L^\infty}^{(k+1)\lt(\frac{1}{\tilde\gamma}-2\rt)} \|w\|_{\dot{H}^{s-\frac{(k+1)\sigma}{2}}}\|\nabla n\|_{L^\infty}\|n\|_{\dot{H}^{s-\frac{(k+1)\sigma}{2}}}
 \] 
thanks to \eqref{tech_1}. 
Finally, we put $\frac{\sfJ_2}{\tilde{\gamma}}$ into \eqref{P_k} and gather all the above estimates to yield the desired relation.
%
%
%
%
%
\subsection{Proof of Lemma \ref{higher_it}}\label{app.D}
We see that 
\[
\begin{aligned}
	\frac{1}{2} &\frac{d}{dt} \intr n^{k(\frac{1}{\tilde{\gamma}}-2)} |\Lambda^{s-\frac{k\sigma}{2}} w|^2\, dx \\
	&=\frac{k}{2} \lt(\frac{1}{\tilde{\gamma}}-2\rt) \intr n^{k(\frac{1}{\tilde{\gamma}}-2)-1} \pa_t n |\Lambda^{s-\frac{k\sigma}{2}} w|^2\, dx
	+\intr n^{k(\frac{1}{\tilde{\gamma}}-2)} \Lambda^{s-\frac{k\sigma}{2}} w \Lambda^{s-\frac{k\sigma}{2}}\pa_t w\, dx \\
	& =: \sfK_1+\sfK_2.
\end{aligned}
\]
We first rewrite $\sfK_1$ as 
\[
\begin{aligned}
	\sfK_1
	&= 	-\frac{1}{2} \intr  w \cdot \nabla (n^{k(\frac{1}{\tilde{\gamma}}-2)}) |\Lambda^{s-\frac{k\sigma}{2}} w|^2\, dx 
	-	\frac{1}{2} \intr  v \cdot \nabla  (n^{k(\frac{1}{\tilde{\gamma}}-2)}) |\Lambda^{s-\frac{k\sigma}{2}} w|^2\, dx\\ 
	&\quad -	\frac{k\tilde{\gamma}}{2} \lt(\frac{1}{\tilde{\gamma}}-2\rt) \intr n^{k(\frac{1}{\tilde{\gamma}}-2)}  (\nabla \cdot  w) |\Lambda^{s-\frac{k\sigma}{2}} w|^2\, dx 
	\\
	&\quad-\frac{k\tilde{\gamma}}{2} \lt(\frac{1}{\tilde{\gamma}}-2\rt) \intr n^{k(\frac{1}{\tilde{\gamma}}-2)}  ( \nabla \cdot v) |\Lambda^{s-\frac{k\sigma}{2}} w|^2\, dx \\
	& =: \sum_{i=1}^4 \sfK_{1i}.
\end{aligned}
\]
Here we easily get
\[
\sfK_{13} \lesssim \|n\|_{L^{\infty}}^{k(\frac{1}{\tilde{\gamma}}-2)}  \|\nabla w  \|_{L^{\infty}} \|w\|_{\dot{H}^{s-\frac{k\sigma}{2}}}^2 
\]
and
\[
\sfK_{14} \leq -\frac{k\tilde{\gamma} d}{2(1+t)} \lt(\frac{1}{\tilde{\gamma}}-2\rt) 
\intr n^{k(\frac{1}{\tilde{\gamma}}-2)}  |\Lambda^{s-\frac{k\sigma}{2}} w|^2\, dx +\frac{C}{(1+t)^2} \|n\|_{L^{\infty}}^{k(\frac{1}{\tilde{\gamma}}-2)} \|w\|_{\dot{H}^{s-\frac{k\sigma}{2}}}^2.
\]
Meanwhile, since $s>1+\frac{k \sigma}{2}$, we find
\[
\begin{aligned}
	\sfK_2&=
	-\frac{1}{2}\intr n^{k(\frac{1}{\tilde{\gamma}}-2)} w \cdot \nabla (| \Lambda^{s-\frac{k\sigma}{2}} w|^2) \, dx 
	-\frac{1}{2} \intr n^{k(\frac{1}{\tilde{\gamma}}-2)}   v \cdot\nabla (| \Lambda^{s-\frac{k\sigma}{2}} w|^2)  \, dx \\
	&\quad +\lt(s-\frac{k\sigma}{2}\rt)\intr n^{k(\frac{1}{\tilde{\gamma}}-2)} \Lambda^{s-\frac{k\sigma}{2}} w \sum_{1\leq k\leq d} \pa_k v \cdot \Lambda^{s-\frac{k\sigma}{2}-2} \pa_k \nabla w\, dx \\
	&\quad -\intr n^{k(\frac{1}{\tilde{\gamma}}-2)} \Lambda^{s-\frac{k\sigma}{2}} w \Lambda^{s-\frac{k\sigma}{2}} (w \cdot \nabla v)\, dx 
	+\intr n^{k(\frac{1}{\tilde{\gamma}}-2)} \Lambda^{s-\frac{k\sigma}{2}} w\, ( [w, \Lambda^{s-\frac{k\sigma}{2}}] \cdot\nabla w ) \, dx\\ 
	&\quad +\intr n^{k(\frac{1}{\tilde{\gamma}}-2)} \Lambda^{s-\frac{k\sigma}{2}} w \,\Big([v, \Lambda^{s-\frac{k\sigma}{2}}] \cdot \nabla w- (s-\frac{k\sigma}{2}) \sum_{1\leq k\leq d} \pa_k v \cdot \Lambda^{s-\frac{k\sigma}{2}-2} \pa_k \nabla w  \Big)\, dx\\
	&\quad  - \mathcal{P}_s(k) 
	+\mathcal{R}_s(k)\\
	&=: \sum_{i=1}^6 \sfK_{2i}- \mathcal{P}_s(k) 
	+\mathcal{R}_s(k).
\end{aligned}
\]
For $\sfK_{21}$ and $\sfK_{22}$, it is easy to see
\[
\sfK_{21}
	= -\frac{1}{2}\intr n^{k(\frac{1}{\tilde{\gamma}}-2)} w \cdot \nabla (| \Lambda^{s-\frac{k\sigma}{2}} w|^2) \, dx  
	=\frac{1}{2}\intr n^{k(\frac{1}{\tilde{\gamma}}-2)} (\nabla \cdot w)  | \Lambda^{s-\frac{k\sigma}{2}} w|^2 \, dx -\sfK_{11}
\]
and
\[
\sfK_{22}
	=-\frac{1}{2} \intr n^{k(\frac{1}{\tilde{\gamma}}-2)}   v \cdot\nabla (| \Lambda^{s-\frac{k\sigma}{2}} w|^2)  \, dx 
	= \frac{1}{2} \intr n^{k(\frac{1}{\tilde{\gamma}}-2)}  (\nabla \cdot v) | \Lambda^{s-\frac{k\sigma}{2}} w|^2 \, dx -\sfK_{12}.
\]
Thus, we get
\[
\sfK_{11} + \sfK_{21} \lesssim  \|n\|_{L^{\infty}}^{k(\frac{1}{\tilde{\gamma}}-2)} \|\nabla w\|_{L^{\infty}}  \| w\|_{\dot{H}^{s-\frac{k\sigma}{2}}}^2
\]
and
\[
\sfK_{12} +\sfK_{22} \leq \frac{d}{2(1+t)}\intr n^{k(\frac{1}{\tilde{\gamma}}-2)}   | \Lambda^{s-\frac{k\sigma}{2}} w|^2 \, dx +\frac{C}{(1+t)^2}  \|n\|_{L^{\infty}}^{k(\frac{1}{\tilde{\gamma}}-2)}   \|  w\|_{\dot{H}^{s-\frac{k \sigma}{2}}}^2 .
\]
For $\sfK_{23}$ and $\sfK_{24}$, we estimate
\[
\begin{aligned}
	\sfK_{23} +\sfK_{24}
	&=
	(s-\frac{k\sigma}{2})\intr n^{k(\frac{1}{\tilde{\gamma}}-2)} \Lambda^{s-\frac{k\sigma}{2}} w \sum_{1\leq k\leq d} \pa_k v \cdot \Lambda^{s-\frac{k\sigma}{2}-2} \pa_k \nabla w\, dx \\
	&\quad
	-\intr n^{k(\frac{1}{\tilde{\gamma}}-2)} \Lambda^{s-\frac{k\sigma}{2}} w \Lambda^{s-\frac{k\sigma}{2}} (w \cdot \nabla v)\, dx \\
	&\leq -\frac{s-\frac{k\sigma}{2}+1}{1+t}\intr n^{k(\frac{1}{\tilde{\gamma}}-2)} |\Lambda^{s-\frac{k\sigma}{2}} w |^2\, dx +\frac{C}{(1+t)^2} \|n\|_{L^{\infty}}^{k(\frac{1}{\tilde{\gamma}}-2)} \|w\|_{\dot{H}^{s-\frac{k\sigma}{2}}}^2.
\end{aligned}
\]
For $\sfK_{25}$ and $\sfK_{26}$, we use \eqref{tech_1} and \eqref{tech_2} to obtain
\[
\sfK_{25} \lesssim \|n\|_{L^{\infty}}^{k(\frac{1}{\tilde{\gamma}}-2)} \| w\|_{\dot{H}^{s-\frac{k \sigma}{2}}} \|\nabla w\|_{L^{\infty}} \| w\|_{\dot{H}^{s-\frac{k \sigma}{2}}}
\]
and
\[
\begin{aligned}
	&\sfK_{26}
	\lesssim \|n\|_{L^{\infty}}^{k(\frac{1}{\tilde{\gamma}}-2)} \|w\|_{\dot{H}^{s-\frac{k \sigma}{2} }} (\|v\|_{\dot{H}^{s-\frac{k \sigma}{2} }}\|\nabla w\|_{L^\infty} + \|\nabla ^2 v\|_{L^{\infty}}\|w\|_{\dot{H}^{s-\frac{k \sigma}{2}-1}})\\
	&\quad 	\lesssim \|n\|_{L^{\infty}}^{k(\frac{1}{\tilde{\gamma}}-2)} \|w\|_{\dot{H}^{s-\frac{k \sigma}{2} }} \lt( \frac{\|\nabla w\|_{L^\infty}}{(1+t)^{s-\frac{k\sigma}2+1 -\frac d2}} +\frac{\|w\|_{\dot{H}^{s-\frac{k \sigma}{2}-1}}}{(1+t)^3} \rt).
\end{aligned}
\]
Therefore, we deduce that
\[
\begin{aligned}
	\frac{1}{2} &\frac{d}{dt} \intr n^{k(\frac{1}{\tilde{\gamma}}-2)} |\Lambda^{s-\frac{k\sigma}{2}} w|^2\, dx 
	+\frac{s-\frac{d}{2}+1 +k( \frac{d-\sigma}{2}-\tilde{\gamma}d)}{1+t}  
	\intr n^{k(\frac{1}{\tilde{\gamma}}-2)}  |\Lambda^{s-\frac{k\sigma}{2}} w|^2\, dx \\ 
	& \le C\|n\|_{L^{\infty}}^{k(\frac{1}{\tilde{\gamma}}-2)}  \|\nabla w  \|_{L^{\infty}} \|w\|_{\dot{H}^{s-\frac{k\sigma}{2}}}^2 \\
	&\quad +C\|n\|_{L^{\infty}}^{k(\frac{1}{\tilde{\gamma}}-2)} \|w\|_{\dot{H}^{s-\frac{k \sigma}{2} }} \lt( \frac{\|w\|_{\dot{H}^{s-\frac{k\sigma}{2}}}}{(1+t)^2}+\frac{\|\nabla w\|_{L^\infty}}{(1+t)^{s-\frac{k\sigma}2+1 -\frac d2}} +\frac{\|w\|_{\dot{H}^{s-\frac{k \sigma}{2}-1}}}{(1+t)^3} \rt)\\
	&\quad -\mathcal{P}_s(k) + \mathcal{R}_s(k),
\end{aligned}
\]
which implies the desired result if $\frac{d-\sigma}{2}-\tilde{\gamma} d \ge 0$ (equivalently $\gamma \leq 2-\frac{\sigma}{d}$).

%
%
%
%

\end{document}